\theoremstyle{plain}
\newtheorem{theorem}{Theorem}
\newtheorem{lemma}{Lemma}
\newtheorem{proposition}{Proposition}
\theoremstyle{definition}
\newtheorem{definition}{Definition}
\newcommand{\monoide}[2][E]{
	{\langle\boxempty,\Diamond\mid\{\boxempty^2=\boxempty, \Diamond^2=\varepsilon ,\mathrm{#2}\}\cup \mathrm{#1}\rangle}
	}
\newcommand{\mon}[1]{
	{\langle\boxempty,\Diamond\mid\{\boxempty^2=\boxempty, \Diamond^2=\varepsilon ,\mathrm{#1}\}\rangle}
	}		
\def\M{\mathbb{M}}
\def\Eq{\mathbb{E}\mathrm{q}}
\def\E{\mathbb{E}}
\def\eq{\mathrm{eq}}
\def\Id{\mathrm{Id} }
\title{Presentation of monoids generated by a projection and an involution}
\author{
    Pascal Caron
    \and Jean-Gabriel Luque
    \and Bruno Patrou
}
\begin{document}
\maketitle

\begin{abstract}

Monoids generated by  elements of order two appear in numerous places in the literature. For example, Coxeter reflection groups in geometry, Kuratowski monoids in topology, various monoids
generated by regular operations in language theory and so on. In order to initiate a classification of these monoids, we are interested in the subproblem of monoids, called strict 2-PIMs,
generated by an involution and an idempotent.
In this case we show, when the monoid is finite,  that it is generated by a single equation (in addition to the two defining the involution and the idempotent). We then describe the exact possible forms of this equation and classify them. We recover Kuratowski's theorem as a special case of our study.
\end{abstract}

\section{Introduction}
Kuratowski's theorem \cite{Kur22} is a topology result which states that in a topological space, the monoid generated by a closure operation and the complement has at most 14 elements.
Moreover, there exists a topological space and a set for which this bound of 14 is reached. 
For more on Kuratowski-type theorems for topological spaces, see Gardner and Jackson \cite{GJ08a}.
Hammer \cite{Ham60} noticed that such a statement is valid in a more general setting; it is not necessary to consider topological spaces and topological closure. The theorem is also valid if, instead of topological closure, we use a closure operator on a set X.
Since Kleene's star is a closure operation on formal languages, Peleg \cite{Pel84} was the first to extend Kuratowski's result to operations in this context.

An orbit is a set of values corresponding to the different numbers of sets that can be obtained from operations. Dassow refines Kuratowski's work by taking (formal) languages as a set and involution and closure as operations \cite{Das19,Das21}, then extends his work to Boolean functions \cite{Das22} . In the same framework and also for languages, Charlier \textit{et al.} \cite{CDHS11} show that for a large number of operations, they also obtain finite orbits. 
D. Sherman \cite{She10} generalizes Kuratowski's theorem by considering, in addition to closures and complement, interior union and intersection operations. \\

Idempotency is a mathematical property of a function which states that the successive application of this function to its parameter is equal to the application once. This property arises both in the theory of projectors and in that of closure operations. 

An involution, involutory function, or self-inverse function is a function  that is its own inverse. This notion applies equally to projective geometry, where the square is the identity transformation, and to algebra, where an involution of an algebraic variety is an automorphism of the variety of order two.\\

 An operation that generates a two-element monoid is either an involution or an idempotent operation. While the groups generated by only involutions have been extensively studied (see \textit{e.g.} Coxeter \cite{Cox34,Cox35}), adding idempotent operations in the generator sets gives rise to more complicated structures that have not been classified yet.

In the same vein, we propose to include Kuratowski's theorem in a more general study: monoids generated by exactly one involution, not necessarily the complement, and one idempotent operation, not necessarily a closure one.


\section{Preliminaries}

A \emph{monoid} is a set closed under an associative binary product having a neutral element denoted by $\mathrm{\emph{Id}}$. The juxtaposition of two elements represents their product, the operation symbol being omitted. A presentation of a monoid is a couple $\langle\cal{G}\mid\cal{R}\rangle$ where $\mathcal{G}$ is a set of generators and $\mathcal{R}$ a set of equations inducing an equivalence relation between elements of the monoid. 
The sets $\mathcal{G}$ and $\mathcal{R}$ should be infinite but for this paper we restrict ourselves to finite sets and if $\mathcal{R}=\emptyset$ we recover the free monoid $\mathcal{G}^*$. The presented monoid $\mathrm{M}=\langle \mathcal{G},\mathcal{R}\rangle$ is isomorphic to the quotient of $\mathcal{G}^*$ by the congruence induced by the equations $\mathcal{R}$, i.e. $\mathrm{M}=\mathcal{G}^*/\equiv_{\mathcal{R}}$. For example, the presentation $\langle a, b \mid ab = ba\rangle$ designates the commutative monoid over the alphabet $\{a, b\}$ where each element is equivalent to one of the form $a^pb^q$ ($p, q\in\mathbb{N}$). To lighten the notations, the braces of $\mathcal{G}$ and $\mathcal{R}$ are omitted. A \emph{word} $w$ is an element of $\mathcal{G}^*$. For any $a\in \mathcal{G}$, we denote by $|w|_a$ the number of $a$ in $w$. The \emph{order} of a monoid is its number of elements.

A monoid generated by only one element, $\boxempty$, is said \emph{monogenic}. All infinite monogenic monoids are isomorphic to the semigroup of  natural integers equipped with the addition. The presentation of such a monoid is reduced to  $\langle\boxempty\mid\rangle$ and its elements are $\Id , \boxempty, \boxempty^2, \boxempty^3,\cdots$. For any integer $n\in\mathbb{N}$ there exists, up to an isomorphism, exactly $n$ monogenic monoids of order $n$. Their presentations are of the form $\langle\boxempty\mid\boxempty^n=\boxempty^k\rangle$, with $k\in\{0,\cdots,n-1\}$ and their extensive description is $\{\Id , \boxempty, \boxempty^2,\cdots, \boxempty^{n-1}\}$.  The \emph{order} of an element  $\boxempty$ is the order of the monoid generated by  $\boxempty$.

The structure of monoids generated by two elements (\emph{digenic} monoids) is much more complicated. In this paper, we focus on digenic monoids generated by a set of elements of  order two. Such generators are either \emph{projections} (idempotent operators $\boxempty$ such that $\boxempty^2=\boxempty$) or \emph{involutions} (operators $\Diamond$ such that $\Diamond^2=\Id $). We, then, call \emph{PIM} (for Projection-Involution-Monoid) any monoid generated by only  generators of order $2$ and \emph{2-PIM} any monoid generated by exactly two  generators of order $2$.

If both generators, $\boxempty$ and $\Diamond$, are involutive then their roles are symmetrical and two situations can occur.

\begin{enumerate}
\item There exists a smallest integer $p$ such that $(\boxempty\Diamond)^p=\Id $. In fact, if it is not the case, we may have
 $(\boxempty\Diamond)^p\boxempty= \Id $ then by multiplying both sides by $\Diamond$, we have $\Diamond(\boxempty\Diamond)^p\boxempty\Diamond=\Id $. Then multiplying these two equalities, we have $(\boxempty\Diamond)^{2p+2}=\Id $. We have also $(\boxempty\Diamond)^p\boxempty=\Id $ implies $(\boxempty\Diamond)^p=\boxempty$ and thus $(\boxempty\Diamond)^{2p}=\Id $. Then from the previous equations, we have $(\boxempty\Diamond)^2=\Id $. Thus if $p$ is even then $\boxempty=\Id $ and if $p$ is odd then $\Diamond=\Id $ and so this case is degenerated.

 In the non-degenerated case, the  monoid  
$$\langle\boxempty, \Diamond\mid \boxempty^2=\varepsilon , \Diamond^2=\varepsilon , (\boxempty\Diamond)^p=\varepsilon \rangle,$$
is the \textsc{Coxeter} group $\mathrm{I}_2(p)$ (see \cite{Cox34})
which contains exactly $2p$ elements.
\item For all integer $p$ we have $(\boxempty\Diamond)^p\neq\Id $.\\
Then the generated monoid is infinite and its presentation is:
$$\langle\boxempty, \Diamond\mid \boxempty^2=\varepsilon , \Diamond^2=\varepsilon \rangle.$$
\end{enumerate}

When the generators of a 2-PIM are exactly one projection and one involution, we call the monoid a \emph{strict 2-PIM}. This is the structure we study in this paper.




\section{The presentation of a strict 2-PIM}
\subsection{Generic cases}
If the presentation of the monoid is restricted only by both constraints imposed to a monoid to be a strict 2-PIM, we obtain the following infinite monoid:
\[
\langle\boxempty,\Diamond\mid\boxempty^2=\boxempty, \Diamond^2=\varepsilon \rangle = \{\boxempty^d(\Diamond\boxempty)^k\Diamond^f\mid d,f\in\{0,1\}, k\in\mathbb{N}\}
\]
Otherwise, the various equations that can be added to the presentation are listed in  Table \ref{eqDiogenes}. All are obtained from the generic form
\[
\boxempty^{d_0}(\Diamond\boxempty)^{k_0}\Diamond^{f_0} = \boxempty^{d_1}(\Diamond\boxempty)^{k_1}\Diamond^{f_1}
\]
by varying the values of $d_0, f_0, d_1$ and $f_1$ and eliminating obvious symmetries.

\begin{table}[hbt]
\begin{center}
\begin{tabular}{|cccc|l|lll|c|}
\hline
$d_0$ & $f_0$ & $d_1$ & $f_1$ & form of equation & \multicolumn{3}{c|}{additional details} & set\\
\hline
0 & 0 & 0 & 0 & $(\Diamond\boxempty)^{k_0} = (\Diamond\boxempty)^{k_1}$ & $k_0\neq 0$ & & $k_0<k_1$ & $\Eq_0$\\
\hline
0 & 0 & 0 & 1 & $(\Diamond\boxempty)^{k_0} = (\Diamond\boxempty)^{k_1}\Diamond$ & $k_0\neq 0$ & $k_1\neq 0$ & & $\Eq_1$\\
\hline
0 & 0 & 1 & 0 & $(\Diamond\boxempty)^{k_0} = \boxempty(\Diamond\boxempty)^{k_1}$ & $k_0\neq 0$ & & & $\Eq_2$\\
\hline
0 & 0 & 1 & 1 & $(\Diamond\boxempty)^{k_0} = \boxempty(\Diamond\boxempty)^{k_1}\Diamond$ & $k_0\neq 0$ & & & $\Eq_3$\\
\hline
0 & 1 & 0 & 1 & $(\Diamond\boxempty)^{k_0}\Diamond = (\Diamond\boxempty)^{k_1}\Diamond$ & $k_0\neq 0$ & & $k_0<k_1$ & $\Eq_4$\\
\hline
0 & 1 & 1 & 0 & $(\Diamond\boxempty)^{k_0}\Diamond = \boxempty(\Diamond\boxempty)^{k_1}$ & $k_0\neq 0$ & & & $\Eq_5$\\
\hline
0 & 1 & 1 & 1 & $(\Diamond\boxempty)^{k_0}\Diamond = \boxempty(\Diamond\boxempty)^{k_1}\Diamond$ & $k_0\neq 0$ & & & $\Eq_6$\\
\hline
1 & 0 & 1 & 0 & $\boxempty(\Diamond\boxempty)^{k_0} = \boxempty(\Diamond\boxempty)^{k_1}$ & & & $k_0<k_1$ & $\Eq_7$\\
\hline
1 & 0 & 1 & 1 & $\boxempty(\Diamond\boxempty)^{k_0} = \boxempty(\Diamond\boxempty)^{k_1}\Diamond$ & & & & $\Eq_8$\\
\hline
1 & 1 & 1 & 1 & $\boxempty(\Diamond\boxempty)^{k_0}\Diamond = \boxempty(\Diamond\boxempty)^{k_1}\Diamond$ & & & $k_0<k_1$ & $\Eq_9$\\
\hline
\end{tabular}
\caption{Useful equations in strict 2-PIM presentations}\label{eqDiogenes}
\end{center}
\end{table}

The last column of the Table \ref{eqDiogenes} gives a name to the set of equations described by the row by varying $k_0$ and $k_1$, and the set of all these equations is  denoted by $\Eq=\displaystyle\bigcup_{i=0}^{9}\Eq_i$

\subsection{Degenerated cases}
The details given in the third column avoid considering degenerate cases where monoids are in fact monogenic, as shown by Lemma \ref{LemmaCasDegeneres}.
\begin{lemma}\label{LemmaOutil}
For every $k>0$,
$$\begin{array}{llllll}
\textrm{if} & (\Diamond\boxempty)^k=\Id  & (i)&
\textrm{or} & (\Diamond\boxempty)^k\Diamond=\Id  & (ii)
\end{array}$$
then $\boxempty=\Id $.
\end{lemma}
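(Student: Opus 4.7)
The plan is to exploit the defining projection relation $\boxempty^2=\boxempty$ to ``absorb'' a trailing $\boxempty$ into any suffix of the form $(\Diamond\boxempty)^k$. More precisely, for every $k>0$ we have the key identity
$$(\Diamond\boxempty)^k\boxempty \;=\; (\Diamond\boxempty)^{k-1}\Diamond\boxempty^2 \;=\; (\Diamond\boxempty)^{k-1}\Diamond\boxempty \;=\; (\Diamond\boxempty)^k,$$
which will be applied as an auxiliary calculation in both cases.

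For case $(i)$, starting from $(\Diamond\boxempty)^k=\Id$ I right-multiply by $\boxempty$: the left-hand side equals $(\Diamond\boxempty)^k$ by the identity above, while the right-hand side becomes $\boxempty$; so $\boxempty=(\Diamond\boxempty)^k=\Id$.

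For case $(ii)$, starting from $(\Diamond\boxempty)^k\Diamond=\Id$ I first right-multiply by $\Diamond$ and use $\Diamond^2=\Id$ to obtain $(\Diamond\boxempty)^k=\Diamond$. This puts me in the same configuration as case $(i)$, but with $\Diamond$ on the right instead of $\Id$. Applying the same absorption trick (right-multiplication by $\boxempty$) yields $(\Diamond\boxempty)^k=\Diamond\boxempty$, hence $\Diamond=\Diamond\boxempty$; left-multiplication by $\Diamond$ finally gives $\boxempty=\Id$.

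There is no real obstacle here: the argument is entirely a short manipulation of the defining relations $\boxempty^2=\boxempty$ and $\Diamond^2=\Id$. The only thing to be careful about is the hypothesis $k>0$, which is exactly what guarantees the presence of a $\boxempty$ at the right end of $(\Diamond\boxempty)^k$ so that the absorption step is legal.
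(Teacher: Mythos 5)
Your proof is correct and follows essentially the same route as the paper's: right-multiply by $\boxempty$ and absorb it via $\boxempty^2=\boxempty$ in case $(i)$, and in case $(ii)$ first reduce to $(\Diamond\boxempty)^k=\Diamond$, then repeat the absorption and cancel $\Diamond$ on the left. The only difference is that you make explicit the absorption identity $(\Diamond\boxempty)^k\boxempty=(\Diamond\boxempty)^k$, which the paper leaves implicit.
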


\begin{proof}
\ \\
\begin{minipage}{0.45\linewidth}
$$\begin{array}{llll}
            & (\Diamond\boxempty)^k=\Id  & (i) &\\
\Rightarrow & (\Diamond\boxempty)^k=\boxempty &\\
\Rightarrow & \boxempty=\Id  & &
\end{array}$$
\end{minipage}
\begin{minipage}{0.45\linewidth}

$$\begin{array}{llll}
            & (\Diamond\boxempty)^k\Diamond=\Id  & (ii) &\\
\Rightarrow & (\Diamond\boxempty)^k=\Diamond &\\
\Rightarrow & (\Diamond\boxempty)^k=\Diamond\boxempty &\\
\Rightarrow & \Diamond\boxempty=\Diamond & &\\
\Rightarrow & \boxempty=\Id  & &
\end{array}$$
\end{minipage}

\end{proof}
\begin{lemma}\label{LemmaCasDegeneres}
A strict 2-PIM  is monogenic if one of the following equations appears in its presentation:
\begin{enumerate}
	\item $\Id= (\Diamond\boxempty)^{k}$ for $k>0$;
	\item $(\Diamond\boxempty)^{k} = \Diamond$ or $(\Diamond\Box)^k\Diamond=\Id$ for $k\geq 0$;
	\item $\Id = \boxempty(\Diamond\boxempty)^{k}$ for $k\geq0$;
	\item $\Id= \boxempty(\Diamond\boxempty)^{k}\Diamond$ with $k\geq0$ ;
	\item $\Diamond = (\Diamond\boxempty)^{k}\Diamond$ with $k>0$ ;
	\item $\Diamond = \boxempty(\Diamond\boxempty)^{k}$ with $k\geq0$ ;
	\item $\Diamond = \boxempty(\Diamond\boxempty)^{k}\Diamond$ with $k\geq0$.
\end{enumerate}
\end{lemma}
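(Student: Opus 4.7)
The plan is to show that in each of the seven listed cases the equation forces one of the generators to coincide with $\mathrm{Id}$; once that happens the monoid is generated by the remaining single generator and is therefore monogenic. All reductions will use only the defining relations $\boxempty^2=\boxempty$ and $\Diamond^2=\mathrm{Id}$ together with Lemma \ref{LemmaOutil}.

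The first group of cases follows directly. Case (1) is exactly hypothesis (i) of Lemma \ref{LemmaOutil} and gives $\boxempty=\mathrm{Id}$. In case (2), the equation $(\Diamond\boxempty)^k\Diamond=\mathrm{Id}$ is hypothesis (ii) of Lemma \ref{LemmaOutil}; the alternative $(\Diamond\boxempty)^k=\Diamond$ reduces to the previous one by right-multiplying by $\Diamond$ when $k\geq 1$, and collapses to $\Diamond=\mathrm{Id}$ when $k=0$. Case (5) is handled identically: right-multiplication of $\Diamond=(\Diamond\boxempty)^k\Diamond$ by $\Diamond$ yields $\mathrm{Id}=(\Diamond\boxempty)^k$ and Lemma \ref{LemmaOutil}(i) applies.

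For the remaining cases I would argue as follows. In case (3) with $k\geq 1$, left-multiplying $\mathrm{Id}=\boxempty(\Diamond\boxempty)^k$ by $\boxempty$ and using $\boxempty^2=\boxempty$ gives $\boxempty=\mathrm{Id}$; for $k=0$ the equation reads $\boxempty=\mathrm{Id}$ directly. Case (6) is analogous: left-multiplying $\Diamond=\boxempty(\Diamond\boxempty)^k$ by $\boxempty$ gives $\boxempty\Diamond=\Diamond$, and right-multiplying by $\Diamond$ then yields $\boxempty=\mathrm{Id}$. The last two cases reduce to previously handled ones by a single right-multiplication by $\Diamond$: equation (4) becomes the hypothesis of (6), and equation (7) becomes the hypothesis of (3).

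The argument is essentially mechanical once Lemma \ref{LemmaOutil} is available; the only real obstacle is bookkeeping, namely identifying which of the seven cases are truly independent (1, 2, 3, 5, 6 after the above reductions) and treating the boundary value $k=0$ separately where expressions such as $(\Diamond\boxempty)^{k-1}$ would be undefined. A clean presentation groups the cases into those that invoke Lemma \ref{LemmaOutil} after at most one right-multiplication by $\Diamond$, and those that collapse via a left-multiplication by $\boxempty$ using idempotency.
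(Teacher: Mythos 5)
Your proof is correct and follows essentially the same strategy as the paper: reduce each case by a single left- or right-multiplication either to Lemma \ref{LemmaOutil} or to an earlier case, forcing one generator to equal $\mathrm{Id}$. The only differences are in the routing (e.g.\ the paper sends cases 4, 6, 7 to $\mathrm{Id}=(\Diamond\boxempty)^{k+1}$ or $\mathrm{Id}=(\Diamond\boxempty)^{k+1}\Diamond$ by conjugating or left-multiplying by $\Diamond$, while you send case 4 to case 6, case 7 to case 3, and settle case 6 directly via $\boxempty\Diamond=\Diamond$), and all of your reductions check out.
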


\begin{proof}
\noindent
\begin{enumerate}
	\item  Lemma \ref{LemmaOutil} (i) leads to an immediate conclusion.
	\item 
 If $k>0$ then both equalities are equivalent to Lemma \ref{LemmaOutil} (ii). If $k=0$ then the statement reads $\Diamond=\Id$.
	\item 
 We have $\Box=\Box \Id=\Box \Box(\Diamond \Box)^k=\Box(\Diamond \Box)^k=\Id$.
	\item 
 We have $\Id=\Diamond\Diamond=\Diamond\Box(\Diamond \Box)^k\Diamond\Diamond=(\Diamond\Box)^{k+1}$. Hence, the result is recovered from  Item 1.
	\item From $\Diamond=(\Diamond\boxempty)^{k}\Diamond$, we have $\Id =(\Diamond\boxempty)^{k}$ and we conclude using  Item 1.
	\item From $\Diamond = \boxempty(\Diamond\boxempty)^{k}$, we have $\Id = (\Diamond\boxempty)^{k+1}$ and we conclude  using  Item 1.
	\item From $\Diamond = \boxempty(\Diamond\boxempty)^{k}\Diamond$, we have $\Id  = (\Diamond\boxempty)^{k+1}\Diamond$ and we conclude using Item 2.
\end{enumerate}
\end{proof}
The  cases enumerated in Lemma \ref{LemmaCasDegeneres} correspond to degenerated cases of Table \ref{eqDiogenes} for $k_0=0$. We summarize this in Table  \ref{TabDeg} below.
\begin{table}[h]
\[
\begin{array}{|c|c|c|}
\hline \mbox{Point in Lemma \ref{LemmaCasDegeneres}}&
\mbox{Type of equation (Table \ref{eqDiogenes})}&\mbox{Degenerated specialization}\\\hline
1&\Eq_0&k_0=0\\
2&\Eq_1&k_0=0\mbox{ or }k_1=0\\
3&\Eq_2&k_0=0\\
4&\Eq_3&k_0=0\\
5&\Eq_4&k_0=0\mbox{ or }k_1=0\\
6&\Eq_5&k_0=0\\
7&\Eq_6&k_0=0.\\\hline
\end{array}
\]
\caption{Degenerated cases of Table \ref{eqDiogenes}\label{TabDeg}}
\end{table}
We notice that setting $k_0=0$ or $k_1=0$ in equations of $\Eq_7, \Eq_8$ or $\Eq_9$ does not provide any additional degenerated cases. Indeed, in all that cases the number of $\Box$ on either side of the equations is non zero, and so neither side can be reduced to $\Id$ by iterating the rule.
\subsection{Equivalent sets of equations}

We now observe that some sets of equations are redundant, which reduces the number of cases to consider. We say that $\Eq_i$ \emph{induces} $\Eq_{j}$, $i,j\in[0,9]$, if for any $\eq'\in\Eq_j$, there exists $\eq\in\Eq_i$  which is equivalent to $\eq'$. If $\Eq_i$ induces $\Eq_j$ and $\Eq_j$ induces $\Eq_i$, we say that $\Eq_i$ and $\Eq_j$ are  \emph{exchangeable}.

Left or right multiplication  by the idempotent $\Diamond$ induces relations of exchangeability. For instance, left multiplying  by $\Diamond$ any (non degenerated) equation $(\Diamond \Box)^{k_0}=(\Diamond \Box)^{k_1}\Diamond$ in $\Eq_1$ gives 
$\Diamond(\Diamond \Box)^{k_0}=\Diamond(\Diamond \Box)^{k_1}\Diamond$ and equivalently  $\Box(\Diamond \Box)^{k_0-1}=\Box(\Diamond \Box)^{k_1-1}\Diamond$ which is an equation of $\Eq_8$. Conversely, left multiplying by $\Diamond$ any equation of $\Eq_8$ gives an equation of $\Eq_1$. Hence, the set $\Eq_1$ and $\Eq_8$ are exchangeable. Other relations of exchangeability are computed in the same way. These relations are summarized in the following graph, which shows $4$ strongly connected components, illustrating equivalence by exchangeability.

\centerline{
\begin{tikzpicture}[node distance=3cm, bend angle=15]
\node(q0)[state]{$\Eq_0$};
\node(q7)[state,above right of = q0,yshift=-1cm]{$\Eq_7$};
\node(q4)[state,below right of = q0,yshift=1cm]{$\Eq_4$};
\node(q9)[state,below right of = q7,yshift=1cm]{$\Eq_9$};
\path[<->]
(q0)edge[]node[above left,xshift=.2cm]{$\Diamond-$}(q7)
(q0)edge[]node[below left,xshift=.2cm]{$-\Diamond$}(q4)
(q7)edge[]node[above right,xshift=-.2cm]{$-\Diamond$}(q9)
(q4)edge[]node[below right,xshift=-.2cm]{$\Diamond-$}(q9);
\node(q1)[state,below left of = q0,xshift=-1.5cm,yshift=0cm]{$\Eq_1$};
\node(q8)[state,right of = q1]{$\Eq_8$};
\path[<->]
(q1)edge[]node[above]{$\Diamond-$}(q8)
(q1)edge[loop left]node[]{$-\Diamond$}(q1)
(q8)edge[loop right]node[]{$-\Diamond$}(q8);
\node(q2)[state,right of = q8,xshift=2.4cm]{$\Eq_2$};
\node(q6)[state,right of = q2]{$\Eq_6$};
\path[<->]
(q2)edge[]node[above]{$-\Diamond$}(q6)
(q2)edge[loop left]node[]{$\Diamond-$}(q2)
(q6)edge[loop right]node[]{$\Diamond-$}(q6);
\node(q3)[state,below of = q0,xshift=.6cm,yshift=-.5cm]{$\Eq_3$};
\node(q5)[state,right of = q3]{$\Eq_5$};
\path[<->]
(q3)edge[]node[above]{$-\Diamond$}(q5)
(q3)edge[]node[below]{$\Diamond-$}(q5);
\end{tikzpicture}}

An edge of the form $\Eq_i \stackrel{\Diamond-}{\longrightarrow} \Eq_j$ (resp. $\Eq_i \stackrel{-\Diamond}{\longrightarrow} \Eq_j$) means that for any equation $u'=v'$ in $\Eq_j$, the equation $\Diamond u'=\Diamond v'$ (resp. $u'\Diamond = v'\Diamond$) is equivalent to an equation of $\Eq_i$. In other words $\Eq_i$ induces $\Eq_j$.
Every edge being two-way, all the sets of a same component are pairwise exchangeable. As a consequence, we restrict our study to the  $\Eq_0$, $\Eq_1$, $\Eq_2$ and $\Eq_3$ sets only.

\subsection{A simple parameterization}

The sets $\Eq_0$, $\Eq_1$, $\Eq_2$, $\Eq_3$ are described by equations with two integer parameters $k_0$ and $k_1$. For $\Eq_1$, $\Eq_2$ and $\Eq_3$, we show that only one integer parameter and one boolean parameter are needed to describe the equations. To do this, we define four subsets ; each is partitionned into two parts corresponding to the boolean parameter. We take this opportunity to present most of the equations in a slightly different form, and to index our sets using a binary notation that will be useful later on. Note that the sets $\E_{00}$ and $\Eq_0$ are identical, the notation $\E_{00}$ being introduced for the sake of homogeneity with $\E_i$, $i\in\{01,10,11\}$. In the following, we   denote by $\underline{i}=2i_1+i_0$ the decimal notation for $i=i_1i_0$. Let us also define  $\E=\displaystyle\bigcup_{i\in\{00,01,10,11\}}\E_i$.


\begin{definition}
Let $\E_{00}=\E_{00}^\circ\cup \E_{00}^\bullet$ with
$$\begin{array}{lcl}
\E_{00}^{\circ} &=& \{(\Diamond\boxempty)^{k}=(\Diamond\boxempty)^{k+\ell}|k,\ell>0,\ell\text{ even}\}\\
\E_{00}^{\bullet} &=& \{(\Diamond\boxempty)^{k}=(\Diamond\boxempty)^{k+\ell}|k,\ell>0, \ell\text{ odd}\}
\end{array}$$
\end{definition}

\begin{definition}
Let $\E_i=\E_i^\circ \cup \E_i^\bullet$, $i\in\{01,10,11\}$ with 
$$\begin{array}{lcl}
\E_{01}^\circ &=& \{(\Diamond\boxempty)^{k}=(\Diamond\boxempty)^{k+1}\Diamond|k>0\}\\
\E_{01}^\bullet &=& \{(\Diamond\boxempty)^{k}=(\Diamond\boxempty)^{k}\Diamond|k>0\}\\
\E_{10}^\circ &=& \{(\Diamond\boxempty)^{k}=(\boxempty\Diamond)^{k}\boxempty|k>0\}\\
\E_{10}^{\bullet} &=& \{(\Diamond\boxempty)^{k}=(\boxempty\Diamond)^{k-1}\boxempty|k>0\}\\
\E_{11}^\circ &=& \{(\Diamond\boxempty)^{k}=(\boxempty\Diamond)^{k}\mid k>0\}\\
\E_{11}^{\bullet} &=& \{(\Diamond\boxempty)^{k}=(\boxempty\Diamond)^{k+1}\mid k>0\}
\end{array}$$
\end{definition}

Hereafter, we denote $\eq_i(k_0,k_1)$ the equation of $\Eq_{\underline{i}}$ parameterized by $k_0$ and $k_1$, with $i\in \{01,10,11\}$, and $\eq_{00}(k,\ell)$ the equation $(\Diamond\boxempty)^k=(\Diamond\boxempty)^{k+\ell}$ of $\E_{00}$. We also denote by $\eq_i^s(k)$ the equation of $\E_i^s$ (resp. $\E_i$, if $s=\varepsilon$), $i\in\{01,10,11\}$, $s\in \{\circ,\bullet,\varepsilon\}$, of parameter $k$.

\begin{lemma}\label{lm-impair}

For each $i\in\{01,10,11\}$, we have $\eq_i^\bullet(k) \Rightarrow \eq_{00}(k,1)$. 
\end{lemma}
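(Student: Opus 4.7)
The plan is to verify each of the three cases $i\in\{01,10,11\}$ separately, since $\eq_i^\bullet(k)$ has a different right-hand side for each. Throughout, the only tools I would use are the defining relations $\boxempty^2=\boxempty$ and $\Diamond^2=\varepsilon$, together with the trivial rewriting identities $(\boxempty\Diamond)^m\boxempty = \boxempty(\Diamond\boxempty)^m$ and $\Diamond(\boxempty\Diamond)^m = (\Diamond\boxempty)^m\Diamond$ (both are just two parenthesizations of the same alternating word). Each case reduces to a one- or two-step manipulation.

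For $i=01$, the hypothesis is $(\Diamond\boxempty)^k=(\Diamond\boxempty)^k\Diamond$. I would right-multiply both sides by $\boxempty$. The left-hand side simplifies via $\boxempty^2=\boxempty$ to $(\Diamond\boxempty)^k$, and the right-hand side becomes $(\Diamond\boxempty)^k\Diamond\boxempty=(\Diamond\boxempty)^{k+1}$. This is exactly $\eq_{00}(k,1)$.

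For $i=10$, the hypothesis is $(\Diamond\boxempty)^k=(\boxempty\Diamond)^{k-1}\boxempty$, which rewrites as $(\Diamond\boxempty)^k=\boxempty(\Diamond\boxempty)^{k-1}$. I would left-multiply both sides by $\Diamond\boxempty$; using $\boxempty^2=\boxempty$ on the right, the result is $(\Diamond\boxempty)^{k+1}=(\Diamond\boxempty)^k$.

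For $i=11$, the hypothesis is $(\Diamond\boxempty)^k=(\boxempty\Diamond)^{k+1}$, and I would proceed in two short steps. First, left-multiplying by $\boxempty$ and applying $\boxempty^2=\boxempty$ to the right-hand side gives $\boxempty(\Diamond\boxempty)^k=(\boxempty\Diamond)^{k+1}$; combining with the hypothesis yields $\boxempty(\Diamond\boxempty)^k=(\Diamond\boxempty)^k$. Second, right-multiplying the hypothesis by $\Diamond\boxempty$ and collapsing the trailing $\Diamond\Diamond=\varepsilon$ on the right gives $(\Diamond\boxempty)^{k+1}=(\boxempty\Diamond)^k\boxempty=\boxempty(\Diamond\boxempty)^k$. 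Chaining the two identities produces $(\Diamond\boxempty)^{k+1}=(\Diamond\boxempty)^k$, which is $\eq_{00}(k,1)$.

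There is no genuine obstacle here; the only point that requires a bit of care is keeping track of which endpoint of the alternating word one is working at so as to apply $\boxempty^2=\boxempty$ or $\Diamond^2=\varepsilon$ correctly. The case $i=11$ is the mildly trickiest one because it needs two intermediate equations before the conclusion chains together, whereas $i=01$ and $i=10$ collapse in a single multiplication.
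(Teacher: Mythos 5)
Your proof is correct and follows essentially the same route as the paper: multiply each relation $\eq_i^\bullet(k)$ by a short word ($\boxempty$ on the right for $i=01$, an alternating prefix on the left for $i=10$, and a two-step conjugation/chaining for $i=11$) and collapse using $\boxempty^2=\boxempty$ and $\Diamond^2=\Id$. The only differences are cosmetic (e.g.\ you left-multiply by $\Diamond\boxempty$ where the paper uses $\Diamond\boxempty\Diamond$ in the case $i=10$, and your $i=11$ chain is slightly more direct than the paper's detour through $(\boxempty\Diamond)^k=(\boxempty\Diamond)^{k+1}$).
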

\begin{proof}
For $i=01$, we obtain the result, 
by right multiplying $\eq_{01}^\bullet(k)$ by $\boxempty$.

For $i=10$, 
, we obtain the result by left multiplying $\eq_{10}^\bullet(k)$ by  $\Diamond\boxempty\Diamond$.

By left and right multiplying by $\Diamond$, the equality $\eq_{11}^\bullet(k):(\Diamond\boxempty)^{k}=(\boxempty\Diamond)^{k+1}$ is equivalent to  
\begin{equation}\label{eq4-2}
(\boxempty\Diamond)^{k}=(\Diamond\boxempty)^{k+1}.
\end{equation}
We then have
$$\begin{array}{rcll}
(\boxempty\Diamond)^{k}&=&(\Diamond\boxempty)^{k}\Diamond\boxempty&\text{from }(\ref{eq4-2})\\
&=&(\boxempty\Diamond)^{k+1}\Diamond\boxempty=(\boxempty\Diamond)^{k}\boxempty=\boxempty(\Diamond\boxempty)^k&\text{from }\eq_{11}^\bullet(k)\\
&=&\boxempty(\boxempty\Diamond)^{k+1}=(\boxempty\Diamond)^{k+1}&\text{from }\eq_{11}^\bullet(k).\end{array}$$
Left and right multiplying by $\Diamond$ the equation $(\boxempty\Diamond)^{k}=(\boxempty\Diamond)^{k+1}$
we obtain $(\Diamond\boxempty)^{k}=(\Diamond\boxempty)^{k+1}$.
\end{proof}
\begin{lemma}\label{lm-pair}
For each $i\in\{01,10,11\}$, we have $\eq_i^\circ(k) \Rightarrow \eq_{00}(k,2)$.
\end{lemma}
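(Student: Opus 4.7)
The plan is to treat the three cases $i\in\{01,10,11\}$ separately, mirroring the style of Lemma~\ref{lm-impair}: in each case I start from $\eq_i^\circ(k)$ and apply a short sequence of left/right multiplications by $\boxempty$ or $\Diamond$, collapsing via $\boxempty^2=\boxempty$ and $\Diamond^2=\Id$ until $(\Diamond\boxempty)^k=(\Diamond\boxempty)^{k+2}$ appears. The key identities I will use freely are $(\Diamond\boxempty)^k\boxempty=(\Diamond\boxempty)^k$ (trailing idempotent collapses), $(\boxempty\Diamond)^k\boxempty=\boxempty(\Diamond\boxempty)^k$, and $(\boxempty\Diamond)^k=\boxempty(\Diamond\boxempty)^{k-1}\Diamond$ (simple rewriting).

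The case $i=01$ is essentially one line: right-multiplying $\eq_{01}^\circ(k):(\Diamond\boxempty)^k=(\Diamond\boxempty)^{k+1}\Diamond$ by $\boxempty$ turns the left-hand side into $(\Diamond\boxempty)^k\boxempty=(\Diamond\boxempty)^k$ and the right-hand side into $(\Diamond\boxempty)^{k+1}\Diamond\boxempty=(\Diamond\boxempty)^{k+2}$, which is precisely $\eq_{00}(k,2)$.

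For $i=10$, I rewrite $\eq_{10}^\circ(k)$ as $(\Diamond\boxempty)^k=\boxempty(\Diamond\boxempty)^k$. Left-multiplying by $\Diamond$ and using $\Diamond^2=\Id$ on the left gives the auxiliary identity $\boxempty(\Diamond\boxempty)^{k-1}=(\Diamond\boxempty)^{k+1}$; right-multiplying this by $\Diamond\boxempty$ yields $\boxempty(\Diamond\boxempty)^k=(\Diamond\boxempty)^{k+2}$. Combining with the rewritten $\eq_{10}^\circ(k)$ gives the claim. For $i=11$, I reduce to the case just handled: starting from $\eq_{11}^\circ(k):(\Diamond\boxempty)^k=(\boxempty\Diamond)^k$ and right-multiplying by $\boxempty$, the left-hand side becomes $(\Diamond\boxempty)^k$ and the right-hand side becomes $(\boxempty\Diamond)^k\boxempty=\boxempty(\Diamond\boxempty)^k$, which is exactly (the rewritten form of) $\eq_{10}^\circ(k)$; so $\eq_{11}^\circ(k)\Rightarrow\eq_{10}^\circ(k)\Rightarrow\eq_{00}(k,2)$.

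There is no substantial obstacle: the whole argument amounts to word manipulations in a monoid with two short defining rules, and the only decision to make in each case is which side to multiply on and with which generator. The slightly non-obvious step is the $i=10$ case, where multiplying by $\Diamond$ on the left is the move that lets $\Diamond^2=\Id$ do the work of shifting indices between the two sides; once that is in place, the three cases fit into a uniform template and the conclusion is immediate.
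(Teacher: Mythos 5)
Your proof is correct and follows essentially the same route as the paper's: right-multiplication by $\boxempty$ for $i=01$, a left-multiplication exploiting $\Diamond^2=\Id$ for $i=10$ (the paper does your two steps in one by multiplying by $\Diamond\boxempty\Diamond$), and reduction of $i=11$ to the previous case. No gaps.
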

\begin{proof}
From $\eq_{01}^\circ(k)
$, we obtain the result by right multiplying by $\boxempty$.

From $\eq_{10}^\circ(k)
$, we obtain the result by left multiplying by $\Diamond\boxempty\Diamond$.

From $\eq_{11}^\circ(k)
$, we obtain the result by multiplying on the left by $\Diamond\boxempty\Diamond$ and on the right by $\boxempty$.
\end{proof}

\begin{lemma}\label{lmEqi->Ei}
For each  $i\in\{01,10,11\}$, we have $\eq_i(k_0,k_1)\Rightarrow \eq_i^\circ(k_0)$. 
\end{lemma}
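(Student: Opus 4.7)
The plan is, for each $i\in\{01,10,11\}$, to derive $\eq_i^\circ(k_0)$ from $\eq_i(k_0,k_1)$ by a short sequence of left/right multiplications by $\boxempty$ or $\Diamond$, then reducing with the defining identities $\boxempty^2=\boxempty$ and $\Diamond^2=\Id$. The key observation is that these rules let us absorb a trailing $\boxempty$ into $(\Diamond\boxempty)^k$ and cancel $\Diamond$-pairs, so multiplying by $\Diamond\boxempty$ or $\boxempty\Diamond$ shifts exponents around in a controlled way, eventually eliminating the dependence on $k_1$.

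For $i=01$, I would right-multiply the equation $(\Diamond\boxempty)^{k_0}=(\Diamond\boxempty)^{k_1}\Diamond$ by $\Diamond\boxempty$: on the right, the pair $\Diamond\Diamond$ cancels and the resulting trailing $\boxempty$ is absorbed, yielding the parameter-shifting relation $(\Diamond\boxempty)^{k_0+1}=(\Diamond\boxempty)^{k_1}$. Substituting back into the right-hand side of the original equation then produces $\eq_{01}^\circ(k_0)$.

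For $i=10$, I would left-multiply $(\Diamond\boxempty)^{k_0}=\boxempty(\Diamond\boxempty)^{k_1}$ by $\boxempty$; idempotency reduces the right-hand side back to $\boxempty(\Diamond\boxempty)^{k_1}$, which equals $(\Diamond\boxempty)^{k_0}$ by hypothesis, so $\boxempty(\Diamond\boxempty)^{k_0}=(\Diamond\boxempty)^{k_0}$. This is $\eq_{10}^\circ(k_0)$, after rewriting $\boxempty(\Diamond\boxempty)^{k_0}$ as $(\boxempty\Diamond)^{k_0}\boxempty$.

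The main obstacle is $i=11$: the equation $(\Diamond\boxempty)^{k_0}=\boxempty(\Diamond\boxempty)^{k_1}\Diamond$ carries both a $\boxempty$-prefix and a $\Diamond$-suffix on the right, and neither one-step trick above suffices on its own. My plan is to combine them in two parallel chains. The first chain right-multiplies by $\boxempty$ (absorbing $\Diamond\boxempty$ into the exponent on the right) and then left-multiplies by $\boxempty$, producing $\boxempty(\Diamond\boxempty)^{k_0}=(\Diamond\boxempty)^{k_0}$. The second chain right-multiplies the original separately by $\Diamond$ and by $\Diamond\boxempty$, giving two expressions both equal to $\boxempty(\Diamond\boxempty)^{k_1}$ and hence the relation $(\Diamond\boxempty)^{k_0+1}=(\Diamond\boxempty)^{k_0}\Diamond$; left-multiplying this by $\Diamond$ and simplifying yields $\boxempty(\Diamond\boxempty)^{k_0}=(\boxempty\Diamond)^{k_0}$. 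Chaining the two identities then gives $(\Diamond\boxempty)^{k_0}=(\boxempty\Diamond)^{k_0}$, which is exactly $\eq_{11}^\circ(k_0)$.
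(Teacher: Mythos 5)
Your derivations are correct and follow the same method as the paper's own proof: multiply both sides of $\eq_i(k_0,k_1)$ by short words in the generators, simplify with $\boxempty^2=\boxempty$ and $\Diamond^2=\Id$, and substitute back to eliminate $k_1$. The specific multiplication choices differ slightly (your $i=10$ case is in fact shorter than the paper's, and your $i=11$ case is organized as two parallel chains rather than one), but every absorption step is legitimate under the constraints $k_0,k_1\geq 1$, so the argument is essentially the same.
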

\begin{proof}
Case $i=01$ :
$$\begin{array}{llllll}
            & (\Diamond\boxempty)^{k_0}                          & = & (\Diamond\boxempty)^{k_1}\Diamond                          & (1) & \\
\Rightarrow & (\Diamond\boxempty)^{k_0}\Diamond\boxempty\Diamond & = & (\Diamond\boxempty)^{k_1}\Diamond\Diamond\boxempty\Diamond &     & \\
\Rightarrow & (\Diamond\boxempty)^{k_0+1}\Diamond                & = & (\Diamond\boxempty)^{k_1}\Diamond                          & (2) & \\
\Rightarrow & (\Diamond\boxempty)^{k_0}                          & = & (\Diamond\boxempty)^{k_0+1}\Diamond                        &     & \textrm{from (1) and (2)}
\end{array}$$

Case $i=10$ :
$$\begin{array}{llllll}
            & (\Diamond\boxempty)^{k_0}                           & = & \boxempty(\Diamond\boxempty)^{k_1}                           & (1) & \\
\Rightarrow & (\Diamond\boxempty)^{k_0}\boxempty\Diamond\boxempty & = & \boxempty(\Diamond\boxempty)^{k_1}\boxempty\Diamond\boxempty &     & \\
\Rightarrow & (\Diamond\boxempty)^{k_0+1}                         & = & \boxempty\Diamond\boxempty(\Diamond\boxempty)^{k_1}          &     & \\
\Rightarrow & (\Diamond\boxempty)^{k_0+1}                         & = & \boxempty\Diamond(\Diamond\boxempty)^{k_0}                   &     & \textrm{from (1)}\\
\Rightarrow & \Diamond(\Diamond\boxempty)^{k_0+1}                 & = & \Diamond\boxempty\Diamond(\Diamond\boxempty)^{k_0}           &     & \\
\Rightarrow & (\boxempty\Diamond)^{k_0}\boxempty                  & = & (\Diamond\boxempty)^{k_0}                                    &     &
\end{array}$$

Case $i=11$ :
$$\begin{array}{llllll}
            & (\Diamond\boxempty)^{k_0}                          & = & \boxempty(\Diamond\boxempty)^{k_1}\Diamond                          & (1) & \\
\Rightarrow & \Diamond(\Diamond\boxempty)^{k_0}                  & = & \Diamond\boxempty(\Diamond\boxempty)^{k_1}\Diamond                  &     & \\
\Rightarrow & \Diamond(\Diamond\boxempty)^{k_0}\boxempty\Diamond & = & \Diamond\boxempty(\Diamond\boxempty)^{k_1}\Diamond\boxempty\Diamond &     & \\
\Rightarrow & (\boxempty\Diamond)^{k_0}                          & = & \Diamond\boxempty(\Diamond\boxempty)^{k_1}\Diamond\boxempty\Diamond &     & \\
\Rightarrow & (\boxempty\Diamond)^{k_0}                          & = & \Diamond\boxempty\Diamond\boxempty(\Diamond\boxempty)^{k_1}\Diamond &     & \\
\Rightarrow & (\boxempty\Diamond)^{k_0}                          & = & \Diamond\boxempty\Diamond(\Diamond\boxempty)^{k_0}                  &     & \textrm{from (1)}\\
\Rightarrow & (\boxempty\Diamond)^{k_0}                          & = & (\Diamond\boxempty)^{k_0}                                           &     &
\end{array}$$
\end{proof}

\begin{proposition}\label{lm-Eqi<->Ei}
For each $i\in\{01,10,11\}$, the sets $\Eq_{\underline{i}}$ and $\E_i$ are exchangeable.
\end{proposition}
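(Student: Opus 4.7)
The inclusion $\E_i \subseteq \Eq_{\underline{i}}$ is immediate from the definitions---every $\eq_i^\circ(k)$ and $\eq_i^\bullet(k)$ is literally an instance of $\eq_i(k_0,k_1)$---so $\Eq_{\underline{i}}$ induces $\E_i$ at no cost. The substantive direction is the converse: I must associate to every $\eq_i(k_0,k_1) \in \Eq_{\underline{i}}$ an equivalent member of $\E_i$.

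My plan is to prove that $\eq_i(k_0,k_1)$ is equivalent either to $\eq_i^\circ(k_0)$ or to $\eq_i^\bullet(k_0)$, with the decoration $\circ$ versus $\bullet$ dictated by the parity of $k_1 - k_0$. Lemma \ref{lmEqi->Ei} already gives $\eq_i(k_0,k_1) \Rightarrow \eq_i^\circ(k_0)$, and chaining it with Lemma \ref{lm-pair} produces the periodicity $\eq_{00}(k_0,2)$: $(\Diamond\boxempty)^{k_0+2n}=(\Diamond\boxempty)^{k_0}$ for every $n\geq 0$. Assuming first that $k_1 \geq k_0$, I apply this periodicity to the right-hand side of $\eq_i(k_0,k_1)$. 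In the parity case where $(\Diamond\boxempty)^{k_1}$ (or its $i = 10, 11$ analogue) rewrites to the right-hand side of $\eq_i^\circ(k_0)$, the two equations coincide modulo $\eq_{00}(k_0, 2)$; this closes the converse implication $\eq_i^\circ(k_0) \Rightarrow \eq_i(k_0, k_1)$ and gives $\eq_i(k_0, k_1) \Leftrightarrow \eq_i^\circ(k_0)$. In the other parity case the periodicity collapses the right-hand side further, turning $\eq_i(k_0,k_1)$ into $\eq_i^\bullet(k_0)$, so that $\eq_i(k_0,k_1) \Rightarrow \eq_i^\bullet(k_0)$; the reverse implication $\eq_i^\bullet(k_0) \Rightarrow \eq_i(k_0,k_1)$ is then supplied by Lemma \ref{lm-impair}, whose stronger periodicity $\eq_{00}(k_0,1)$ identifies all $(\Diamond\boxempty)^n$ for $n \geq k_0$.

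The delicate remaining case is $k_1 < k_0$, where the periodicity $\eq_{00}(k_0,2)$ propagates only upward from $k_0$. I will dispatch it by a preliminary symmetry reduction: for $i = 01$, right-multiplication of $(\Diamond\boxempty)^{k_0} = (\Diamond\boxempty)^{k_1}\Diamond$ by $\Diamond$ yields $\eq_{01}(k_0, k_1) \Leftrightarrow \eq_{01}(k_1, k_0)$, so one may always assume $k_0 \leq k_1$; for $i = 10$ and $i = 11$, a one- or two-sided multiplication by $\Diamond$ produces the analogous reduction $\eq_i(k_0,k_1) \Leftrightarrow \eq_i(k_1+1, k_0-1)$ (legal since $k_0 \neq 0$), which exchanges which of the two indices is larger and reduces to a case already handled. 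The main technical task is then simply to carry out, case by case on $i \in \{01, 10, 11\}$ and on the two parities, the short rewriting computations in the exact style of the proofs of Lemmas \ref{lm-impair}, \ref{lm-pair}, and \ref{lmEqi->Ei}; the only real obstacle is bookkeeping, since the location of the $\Diamond$ on each side of the equation alters which multiplications are needed in each sub-case.
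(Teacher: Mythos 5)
Your proposal is correct and follows essentially the same route as the paper's proof: the trivial inclusion $\E_i\subset\Eq_{\underline{i}}$, a reduction to $k_0\leq k_1$ (or $k_0\leq k_1+1$) by multiplying by $\Diamond$, a case split on the parity of $k_1-k_0$, the implication $\eq_i(k_0,k_1)\Rightarrow\eq_i^\circ(k_0)$ from Lemma~\ref{lmEqi->Ei}, and the periodicities $\eq_{00}(k_0,2)$ and $\eq_{00}(k_0,1)$ from Lemmas~\ref{lm-pair} and~\ref{lm-impair} to pass back and forth between $\eq_i(k_0,k_1)$, $\eq_i^\circ(k_0)$ and $\eq_i^\bullet(k_0)$. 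The rewriting computations you defer as bookkeeping are exactly the ones the paper carries out case by case.
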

\begin{proof}
We have $\E_i\subset\Eq_{\underline{i}}$. So $\Eq_{\underline{i}}$ trivially induces $\E_i$. Let us show the counterpart :
\begin{itemize}
\item If $i=01$ then we consider equation $\eq_{01}(k_0,k_1)$: $(\Diamond\boxempty)^{k_0}=(\Diamond\boxempty)^{k_1}\Diamond$. We can assume $k_0\leq k_1$ since, by right multiplying  by $\Diamond$, we obtain the equivalent equation $\eq_{01}(k_1,k_0)$.

There are two cases, depending on the parity of $k_1-k_0$.
\begin{enumerate}
\item If $k_1=k_0+2r+1$ ($r\in\mathbb{N}$): From Lemma \ref{lmEqi->Ei}, we immediately deduce equation $\eq_{01}^\circ(k_0)$.

Conversely, from $\eq_{01}^\circ(k_0)$: $(\Diamond\boxempty)^{k_0}=(\Diamond\boxempty)^{k_0+1}\Diamond$, we deduce $\eq_{00}(k_0,2)$: $(\Diamond\boxempty)^{k_0}=(\Diamond\boxempty)^{k_0+2}$ From Lemma \ref{lm-pair}. We obtain $\eq_{00}(k_0,2r)$: $(\Diamond\boxempty)^{k_0}=(\Diamond\boxempty)^{k_0+2r}$ iterating $r$ times $\eq_{00}(k_0,2)$. Last, combining $\eq_{01}^\circ(k_0)$ and $\eq_{00}(k_0,2r)$, we obtain $\eq_1(k_0,k_1)$: $(\Diamond\boxempty)^{k_0}=(\Diamond\boxempty)^{k_1}\Diamond$.
\item If $k_1=k_0+2r$ ($r\in\mathbb{N}$): once again, by Lemma \ref{lmEqi->Ei}, we obtain equation $\eq_{01}^\circ(k_0)$, then we deduce $\eq_{00}(k_0,2)$ from Lemma \ref{lm-pair}. We still obtain $\eq_{00}(k_0,2r)$ iterating $r$ times $\eq_{00}(k_0,2)$. By combining $\eq_{01}(k_0,k_1)$ and $\eq_{00}(k_0,2r)$, we obtain $\eq_{01}^\bullet(k_0)$: $(\Diamond\boxempty)^{k_0}=(\Diamond\boxempty)^{k_0}\Diamond$.

Conversely, from $\eq_{01}^\bullet(k_0)$, we deduce $\eq_{00}(k_0,1)$: $(\Diamond\boxempty)^{k_0}=(\Diamond\boxempty)^{k_0+1}$ by Lemma \ref{lm-impair}. Iterating $2r$ times $\eq_{00}(k_0,1)$, we obtain $\eq_{00}(k_0,2r)$, which we combine with $\eq_{01}^\bullet(k_0)$ to find again $\eq_{01}(k_0,k_1)$.
\end{enumerate}
\item If $i=10$ then we consider equation $\eq_{10}(k_0,k_1)$: $(\Diamond\boxempty)^{k_0}=(\boxempty\Diamond)^{k_1}\boxempty$. We can assume  $k_0\leq k_1 + 1$ since, by left multiplying by $\Diamond$, we obtain the equivalent equation  $\eq_{10}(k_1+1,k_0-1)$ where $k_0>k_1+1\Rightarrow k_1+1\leq(k_0-1)+1$.

There are two cases depending on the parity of $k_1-k_0$.
\begin{enumerate}
\item If $k_1=k_0+2r+1$ ($r\geq-1$): from Lemma \ref{lmEqi->Ei}, we deduce equation $\eq_{10}^\circ(k_0)$, then we obtain $\eq_{00}(k_0,2)$ by Lemma \ref{lm-pair}. We deduce $\eq_{00}(k_0,2r+2)$ by  iterating $r+1$ times $\eq_{00}(k_0,2)$. By combining $\eq_{10}(k_0,k_1)$ and $\eq_{00}(k_0,2r+2)$, we obtain $\eq_{10}^\bullet(k_0)$: $(\Diamond\boxempty)^{k_0}=(\boxempty\Diamond)^{k_0-1}\boxempty$. Indeed, $(\Diamond\boxempty)^{k_0}=(\boxempty\Diamond)^{k_0+2r+1}\boxempty=\boxempty(\Diamond\boxempty)^{k_0+2r+1}=\boxempty(\Diamond\boxempty)^{(k_0+2r+2)-1}=\boxempty(\Diamond\boxempty)^{k_0-1}=(\boxempty\Diamond)^{k_0-1}\boxempty$.

Conversely, from $\eq_{10}^\bullet(k_0)$, we deduce $\eq_{00}(k_0,1)$: $(\Diamond\boxempty)^{k_0}=(\Diamond\boxempty)^{k_0+1}$ by Lemma \ref{lm-impair}. Iterating $2(r+1)$ times $\eq_{00}(k_0,1)$, we obtain $\eq_{00}(k_0,2(r+1))$, which we combine with $\eq_{10}^\bullet(k_0)$ to find again $\eq_{10}(k_0,k_1)$.
\item If $k_1=k_0+2r$ ($r\in\mathbb{N}$): by Lemma \ref{lmEqi->Ei}, we immediately deduce equation $\eq_{10}^\circ(k_0)$.

Conversely, from $\eq_{10}^\circ(k_0)$: $(\Diamond\boxempty)^{k_0}=(\boxempty\Diamond)^{k_0}\boxempty$, we deduce $\eq_{00}(k_0,2)$ from Lemma \ref{lm-pair}. We obtain $\eq_{00}(k_0,2r)$: $(\Diamond\boxempty)^{k_0}=(\Diamond\boxempty)^{k_0+2r}$ iterating $r$ times $\eq_{00}(k_0,2)$. Last,  combining $\eq_{10}^\circ(k_0)$ and $\eq_{00}(k_0,2r)$, we obtain $\eq_{10}(k_0,k_1)$: $(\Diamond\boxempty)^{k_0}=(\boxempty\Diamond)^{k_1}\boxempty$.
\end{enumerate}

\item If $i=11$ then  we consider equation $\eq_{11}(k_0,k_1)$: $(\Diamond\boxempty)^{k_0}=(\boxempty\Diamond)^{k_1}$. We can assume $k_0\leq k_1$ since, by left and right multiplying   by $\Diamond$, we obtain the equivalent equation $\eq_{11}(k_1,k_0)$.

There are two cases, depending on the parity of $k_1-k_0$.
\begin{enumerate}
\item If $k_1=k_0+2r+1$ ($r\in\mathbb{N}$): from Lemma \ref{lmEqi->Ei}, we obtain equation $\eq_{11}^\circ(k_0)$, then we deduce $\eq_{00}(k_0,2)$ from Lemma \ref{lm-pair}. We obtain $\eq_{00}(k_0,2r)$ iterating $r$ times $\eq_{00}(k_0,2)$. Combining $\eq_{11}(k_0,k_1)$ and $\eq_{00}(k_0,2r)$, we obtain $\eq_{11}^\bullet(k_0)$: $(\Diamond\boxempty)^{k_0}=(\boxempty\Diamond)^{k_0+1}$. Indeed, $(\Diamond\boxempty)^{k_0}=(\boxempty\Diamond)^{k_0+2r+1}=\boxempty(\Diamond\boxempty)^{k_0+2r}\Diamond=\boxempty(\Diamond\boxempty)^{k_0}\Diamond=(\boxempty\Diamond)^{k_0+1}$.

Conversely, from $\eq_{11}^\bullet(k_0)$, we deduce $\eq_{00}(k_0,1)$: $(\Diamond\boxempty)^{k_0}=(\Diamond\boxempty)^{k_0+1}$ by Lemma \ref{lm-impair}. Iterating $2r$ times $\eq_{00}(k_0,1)$, we obtain $\eq_{00}(k_0,2r)$, which we combine with $\eq_{11}^\bullet(k_0)$ to find again $\eq_{11}(k_0,k_1)$. Indeed, $(\Diamond\boxempty)^{k_0}=(\boxempty\Diamond)^{k_0+1}=\boxempty(\Diamond\boxempty)^{k_0}\Diamond=\boxempty(\Diamond\boxempty)^{k_0+2r}\Diamond=(\boxempty\Diamond)^{k_1}$
\item If $k_1=k_0+2r$ ($r\in\mathbb{N}$): from Lemma \ref{lmEqi->Ei}, we immediately obtain equation $\eq_{11}^\circ(k_0)$.

Conversely, from $\eq_{11}^\circ(k_0)$: $(\Diamond\boxempty)^{k_0}=(\boxempty\Diamond)^{k_0}$, we deduce $\eq_{00}(k_0,2)$ by Lemma \ref{lm-pair}. We obtain $\eq_{00}(k_0,2r)$: $(\Diamond\boxempty)^{k_0}=(\Diamond\boxempty)^{k_0+2r}$  iterating $r$ times $\eq_{00}(k_0,2)$, then $\Diamond(\Diamond\boxempty)^{k_0}\Diamond=\Diamond(\Diamond\boxempty)^{k_0+2r}\Diamond$ and $(\boxempty\Diamond)^{k_0}=(\boxempty\Diamond)^{k_0+2r}$. Combining with  $\eq_{11}^\circ(k_0)$, we obtain $\eq_{11}(k_0,k_1)$.
\end{enumerate}
\end{itemize}
\end{proof}

Proposition \ref{lm-Eqi<->Ei} allows us to restrict our study to the sets $\E_i$, $i\in\{00,01,10,11\}$.

\section{Reducing equations}
The aim of this section is to obtain the lattice shown in Figure \ref{latticeEqu}, which highlights the induction relations between the sets of equations.
First, we notice that any conjunction of equations in a set $\E_i^s$ ($i\in\{00,01,10,11\}$, $s\in\{\circ,\bullet,\varepsilon\}$) is equivalent to a single equation in the same set.  Hence we  investigate  the conjunctions of equations belonging to distinct sets.
\subsection{Homogeneous systems}

\begin{lemma}\label{lem:eq1+eq1=eq1}
For every $k_0,k_1,\ell_0,\ell_1>0$, we have 
\[
\eq_{00}(k_0,\ell_0)\wedge \eq_{00}(k_1,\ell_1)\Leftrightarrow \eq_{00}(m,d)
\]
with $m=\min(k_0,k_1)$ and $d=\gcd(\ell_0,\ell_1)$.
\end{lemma}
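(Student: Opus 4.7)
The plan is to work entirely in the submonoid generated by $t := \Diamond\boxempty$, since every equation in $\E_{00}$ involves only powers of $t$. Writing $\eq_{00}(k,\ell)$ as $t^{k} = t^{k+\ell}$, the statement becomes the classical fact that two ``cycle-type'' relations on powers of a single element collapse to one relation whose threshold is the minimum and whose period is the gcd. All manipulations will consist of iterating known equations and left/right multiplying them by powers of $t$, which is legitimate in the ambient monoid.

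For the easy direction ($\Leftarrow$), iterating $\eq_{00}(m,d)$ by transitivity yields $t^{m} = t^{m+rd}$ for every $r \geq 0$. Taking $r = \ell_i/d$ gives $t^{m} = t^{m+\ell_i}$, and right-multiplying by $t^{k_i - m}$ (valid since $m \leq k_i$) produces $\eq_{00}(k_i,\ell_i)$ for $i=0,1$.

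For the harder direction ($\Rightarrow$), assume without loss of generality $m = k_0 \leq k_1$. The first substep is to lower the threshold of $\eq_{00}(k_1,\ell_1)$ down to $k_0$. Pick $r$ large enough that $k_0 + r\ell_0 \geq k_1$; iterate $\eq_{00}(k_0,\ell_0)$ to obtain $t^{k_0} = t^{k_0 + r\ell_0}$, then use $\eq_{00}(k_1,\ell_1)$ (right-multiplied by $t^{k_0 + r\ell_0 - k_1}$) to get $t^{k_0 + r\ell_0} = t^{k_0 + r\ell_0 + \ell_1}$, and finally iterate $\eq_{00}(k_0,\ell_0)$ right-multiplied by $t^{\ell_1}$ to collapse the right-hand side down to $t^{k_0 + \ell_1}$. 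This yields $\eq_{00}(k_0,\ell_1)$.

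The final substep combines $\eq_{00}(k_0,\ell_0)$ and $\eq_{00}(k_0,\ell_1)$ into $\eq_{00}(k_0,d)$ via Bézout. Choose positive integers $a, b$ with $a\ell_0 - b\ell_1 = d$ (by symmetry, swapping the roles of $\ell_0$ and $\ell_1$ if necessary, this sign convention is always attainable). Then $k_0 + a\ell_0 = (k_0 + d) + b\ell_1$, and iterating the two equations at threshold $k_0$ yields
\[
t^{k_0} \;=\; t^{k_0 + a\ell_0} \;=\; t^{(k_0+d) + b\ell_1} \;=\; t^{k_0 + d},
\]
which is exactly $\eq_{00}(m,d)$. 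The main technical obstacle is precisely this last step: in a monoid we cannot cancel exponents, while the Bézout identity provided by the extended Euclidean algorithm produces coefficients of opposite signs. The trick is to rearrange the identity so that only non-negative exponents appear, turning the would-be subtraction into a join of two iteration chains that meet at $t^{k_0 + a\ell_0} = t^{(k_0 + d) + b\ell_1}$.
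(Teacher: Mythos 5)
Your proof is correct and follows essentially the same route as the paper's: both first lower the threshold of the second relation to $m=\min(k_0,k_1)$ by climbing up via iterates of $\eq_{00}(k_0,\ell_0)$, applying $\eq_{00}(k_1,\ell_1)$, and climbing back down to obtain $\eq_{00}(k_0,\ell_1)$, and then both combine the two relations at threshold $k_0$ through a B\'ezout identity rearranged so that only non-negative exponents occur. The only differences are cosmetic (the paper picks the residue $r$ with $k_1-k_0+r\equiv 0 \bmod \ell_0$ rather than an arbitrarily large multiple, and splits the sign cases of the B\'ezout coefficients explicitly instead of swapping $\ell_0$ and $\ell_1$).
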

\begin{proof}
Without restriction, we assume that $k_0\leq k_1$.
Let $0\leq r< \ell_0$ be the only integer such that $k_1-k_0+r\equiv 0\mod \ell_0$.
Then, there exists $k\in\mathbb{N}$ such that $k_1-k_0+r=k\ell_0$.
 Applying  $k$ times $\eq_{00}(k_0,\ell_0)$ , we obtain
\[
(\Diamond\boxempty)^{k_0+\ell_1}=(\Diamond\boxempty)^{k_0+k\ell_0+\ell_1}=(\Diamond\boxempty)^{k_0+k_1-k_0+r+\ell_1}=(\Diamond\boxempty)^{k_1+r+\ell_1}.
\]
Then, considering $\eq_{00}(k_1,\ell_1)$, we obtain
\[
(\Diamond\boxempty)^{k_0+\ell_1}=(\Diamond\boxempty)^{k_1+r+\ell_1}=(\Diamond\boxempty)^{k_1+r}=(\Diamond\boxempty)^{k_0+k_1-k_0+r}=(\Diamond\boxempty)^{k_0+k\ell_0}.
\]
Once again, applying $k$ times $\eq_{00}(k_0,\ell_0)$, we obtain.
\begin{equation}\label{eqk0+l1=k0}
(\Diamond\boxempty)^{k_0+\ell_1}=(\Diamond\boxempty)^{k_0+k\ell_0}=(\Diamond\boxempty)^{k_0}.
\end{equation}
Let us denote by $\alpha_0$ and $\beta_0$ the coefficients of B\'ezout:
\[
\alpha_0 r_0+\alpha_1 r_1=d.
\] 
We assume that $\alpha_0>0$ (resp. $\alpha_1>0$). Applying $\alpha_0$ times $\eq_{00}(k_0,\ell_0)$ (resp. $\alpha_1$ times equation (\ref{eqk0+l1=k0})), we have
\[
(\Diamond\boxempty)^{k_0}=(\Diamond\boxempty)^{k_0+\alpha_0\ell_0} \left(\mbox{resp.}  (\Diamond\boxempty)^{k_0}=(\Diamond\boxempty)^{k_0+\alpha_1\ell_1} \right).
\]
Then,
\[(\Diamond\boxempty)^{k_0+\alpha_0\ell_0}=(\Diamond\boxempty)^{k_0+\alpha_0\ell_0+\alpha_1\ell_1-\alpha_1\ell_1}=(\Diamond\boxempty)^{k_0+d-\alpha_1\ell_1}=(\Diamond\boxempty)^{d+k_0-\alpha_1\ell_1}\stackrel{\mbox{\tiny{by }\ref{eqk0+l1=k0}}}{=}(\Diamond\boxempty)^{d+k_0}\]
\[(\mbox{resp.} (\Diamond\boxempty)^{k_0+\alpha_1\ell_1}=(\Diamond\boxempty)^{k_0+\alpha_0\ell_0+\alpha_1\ell_1-\alpha_0\ell_0}=(\Diamond\boxempty)^{k_0+d-\alpha_0\ell_0}=(\Diamond\boxempty)^{d+k_0-\alpha_0\ell_0}\stackrel{\mbox{\tiny{by $\eq_1(k_0,\ell_0)$}}}{=}(\Diamond\boxempty)^{d+k_0}).\]
In other words,
$
(\Diamond\boxempty)^{k_0}=(\Diamond\boxempty)^{k_0+d}.
$
And this proves the result.\\
Conversely, if we assume that $k_0\leq k_1$ and if we set $q_0=\frac{\ell_0}{d}$ and $q_1=\frac{\ell_1}{d}$, we obtain
\[
(\Diamond\boxempty)^{k_0}=(\Diamond\boxempty)^{k_0+q_0d}=(\Diamond\boxempty)^{k_0+\ell_0}
\]
and
\[
(\Diamond\boxempty)^{k_1}=
(\Diamond\boxempty)^{k_1-k_0}(\Diamond\boxempty)^{k_0}=(\Diamond\boxempty)^{k_1-k_0}(\Diamond\boxempty)^{k_0+q_1d}=(\Diamond\boxempty)^{k_1+\ell_1}.
\]

\end{proof}

\begin{proposition}\label{E00XE00<=>E00}
For any pair of equations $\eq,\eq'$ of $\E_{00}$ there exists an equation $\eq''$ of $\E_{00}$ which is equivalent to $\eq \wedge \eq'$. Moreover $\eq''\in \E_{00}^\circ$ if and only if $\eq\in \E_{00}^\circ$ and $\eq'\in \E_{00}^\circ$.
\end{proposition}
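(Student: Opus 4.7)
The plan is to reduce the proposition to a direct application of Lemma \ref{lem:eq1+eq1=eq1}, and then to treat the parity statement separately by a short number-theoretic observation.

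First I would write $\eq = \eq_{00}(k_0,\ell_0)$ and $\eq' = \eq_{00}(k_1,\ell_1)$ with $k_0,k_1,\ell_0,\ell_1 > 0$. By Lemma \ref{lem:eq1+eq1=eq1}, the conjunction $\eq \wedge \eq'$ is equivalent to the single equation $\eq'' = \eq_{00}(m,d)$, where $m = \min(k_0,k_1)$ and $d = \gcd(\ell_0,\ell_1)$. Since $m > 0$ and $d > 0$, the equation $\eq''$ indeed belongs to $\E_{00}$, which settles the first assertion.

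For the parity statement, I would recall that by definition $\eq_{00}(m,d) \in \E_{00}^\circ$ if and only if $d$ is even. The key observation is then the elementary fact that $\gcd(\ell_0,\ell_1)$ is even if and only if both $\ell_0$ and $\ell_1$ are even: if both are even, then $2$ divides their gcd; conversely, if $d$ is even, then both $\ell_0$ and $\ell_1$ are multiples of $d$ and hence even. Translating back, $d$ is even if and only if $\eq \in \E_{00}^\circ$ and $\eq' \in \E_{00}^\circ$, which is exactly the required equivalence.

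There is no real obstacle here; the heavy lifting has already been done by Lemma \ref{lem:eq1+eq1=eq1}, and the only new ingredient is the trivial remark about the parity of a gcd. The proof will therefore be very short, essentially two lines invoking the lemma followed by the parity observation.
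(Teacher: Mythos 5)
Your proposal is correct and follows essentially the same route as the paper: reduce the conjunction to $\eq_{00}(\min(k_0,k_1),\gcd(\ell_0,\ell_1))$ via Lemma \ref{lem:eq1+eq1=eq1}, then observe that a gcd is even if and only if both arguments are even. Your write-up is in fact slightly more explicit than the paper's two-line proof, but there is no substantive difference.
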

\begin{proof}
It is easy to see that for the $\gcd$ of two numbers to be even, both numbers must be even. Thanks to Lemma \ref{lem:eq1+eq1=eq1}, we  conclude the proof.
\end{proof}

\begin{proposition}\label{k0k1<=>min}
For each $i\in\{01,10,11\}$, $s\in\{\circ,\bullet\}$ and $k_0,k_1>0$, we have 
$$\eq_i^s(k_0)\wedge \eq_i^s(k_1)\Leftrightarrow \eq_i^s(\min(k_0,k_1)).$$
\end{proposition}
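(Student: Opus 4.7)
The forward implication is immediate since $\min(k_0,k_1)$ is one of $k_0,k_1$, so the targeted conjunct is already one of the hypotheses. For the converse direction, writing $m=\min(k_0,k_1)$, I need to derive $\eq_i^s(k)$ from $\eq_i^s(m)$ whenever $k\geq m$. An induction on $k-m$ reduces this to the single-step claim $\eq_i^s(m)\Rightarrow\eq_i^s(m+1)$, and the proof then proceeds by a case analysis on $(i,s)$.

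For $i=01$, the step is handled by left-multiplying $\eq_{01}^s(m)$ by $\Diamond\boxempty$: the factor $(\Diamond\boxempty)^m$ on both sides becomes $(\Diamond\boxempty)^{m+1}$ while the trailing $\Diamond$ on the right-hand side is undisturbed, producing exactly $\eq_{01}^s(m+1)$. Symmetrically, for $i=10$ I right-multiply $\eq_{10}^s(m)$ by $\Diamond\boxempty$: the leading $\boxempty$ on the right-hand side is untouched, and the trailing $(\Diamond\boxempty)$-power increments by one, giving $\eq_{10}^s(m+1)$. Both cases reduce to a single line of computation.

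The main difficulty is $i=11$: here neither multiplication converts $(\boxempty\Diamond)^k$ into $(\boxempty\Diamond)^{k+1}$, since the two sides of $\eq_{11}^s(m)$ have different boundary letters. For $s=\circ$, my plan is to compute both sides of $\eq_{11}^\circ(m+1)$ independently, substituting $(\boxempty\Diamond)^m$ for $(\Diamond\boxempty)^m$ via $\eq_{11}^\circ(m)$ at the right moment. Concretely, $(\Diamond\boxempty)^{m+1}=\Diamond\boxempty(\boxempty\Diamond)^m$ simplifies under $\boxempty^2=\boxempty$ and $\Diamond^2=\Id$ to $(\Diamond\boxempty)^m\Diamond$, while $(\boxempty\Diamond)^{m+1}=(\Diamond\boxempty)^m\boxempty\Diamond=(\Diamond\boxempty)^m\Diamond$; the two common values coincide, giving $\eq_{11}^\circ(m+1)$.

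For the remaining case $i=11$, $s=\bullet$, I would first invoke Lemma~\ref{lm-impair} to extract $\eq_{00}(m,1)\colon(\Diamond\boxempty)^m=(\Diamond\boxempty)^{m+1}$ from $\eq_{11}^\bullet(m)$, then conjugate by $\Diamond$ on both sides to obtain the twin relation $(\boxempty\Diamond)^m=(\boxempty\Diamond)^{m+1}$. Chaining these with the hypothesis yields
\[
(\Diamond\boxempty)^{m+1}=(\Diamond\boxempty)^m=(\boxempty\Diamond)^{m+1}=(\boxempty\Diamond)^{m+2},
\]
which is precisely $\eq_{11}^\bullet(m+1)$, completing the induction.
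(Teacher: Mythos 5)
Your proof is correct and follows essentially the same route as the paper: the forward direction is immediate, and the converse is obtained from the one-step increment $\eq_i^s(m)\Rightarrow\eq_i^s(m+1)$, proved by left/right multiplication by $\Diamond\boxempty$ for $i\in\{01,10\}$ and by a short rewriting computation for $i=11$. The only (harmless) deviation is that for $\E_{11}^\bullet$ you route through Lemma~\ref{lm-impair} and $\eq_{00}(m,1)$, whereas the paper reruns the same symbolic chain as in the $\E_{11}^\circ$ case.
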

\begin{proof}
The implication $$\eq_i^s(k_0)\wedge \eq_i^s(k_1)\Rightarrow \eq_i^s(\min(k_0,k_1))$$ is obvious.\\
Conversely, for $i=01$ (resp. $i=10$), if we set $m=\min(k_0,k_1)$, left (resp. right) multiplying  by $\Diamond\boxempty$ gives $$\eq_i^s(m)\Rightarrow\eq_i^s(m+1)$$ and, repeating the process, for any $k\geq m$, $$\eq_i^s(m)\Rightarrow\eq_i^s(k).$$
When $i=11$, the converse part is similarly obtained by observing :
$$
\begin{array}{rcll}
                 (\Diamond\boxempty)^{m+1} &=& \Diamond(\boxempty\Diamond)^m\boxempty                      & \\
&  = &\Diamond(\Diamond\boxempty)^m\boxempty                      & \mbox{from } \eq_{11}^\circ(m)\\
 &=& (\boxempty\Diamond)^{m-1}\boxempty                          & \\
&=& \boxempty(\boxempty\Diamond)^{m-1}\boxempty\Diamond\Diamond & \\
&=& \boxempty(\boxempty\Diamond)^m\Diamond                      & \\
&=& \boxempty(\Diamond\boxempty)^m\Diamond                      & \mbox{from } \eq_{11}^\circ(m)\\
&=& (\boxempty\Diamond)^{m+1}                                   & 
\end{array}
$$
In other words $\eq_{11}^\circ(m)$ implies $\eq_{11}^\circ(m+1)$ and an almost identical sequence of equivalences shows that $\eq_{11}^\bullet(m)$ implies $\eq_{11}^\bullet(m+1)$.
%
%
%
\end{proof}

Therefore, we summarize the result of the section as follows: A conjunction of several equations belonging to the same set $\E^s_i$   reduces to a single equation. 
\subsection{Heterogeneous systems}
To deal with the case of equations belonging to distinct sets, we begin by observing the graph of implications obtained by direct application of the $\boxempty$ operation.

\centerline{
\begin{tikzpicture}[node distance=2cm, bend angle=15]
\node(q00c)[state]{$\E_{00}^{\circ}$};
\node(q01c)[state,below left of = q00c]{$\E_{01}^{\circ}$};
\node(q10c)[state,below right of = q00c]{$\E_{10}^{\circ}$};
\node(q11c)[state,below right of = q01c]{$\E_{11}^{\circ}$};
\path[->]
(q11c)edge[]node[below left]{$\boxempty-$}(q01c)
(q11c)edge[]node[below right]{$-\boxempty$}(q10c)
(q01c)edge[]node[above left]{$-\boxempty$}(q00c)
(q10c)edge[]node[above right]{$\boxempty-$}(q00c)
(q00c)edge[loop above]node[below left,xshift=-.2cm]{$\boxempty-$}node[below right,xshift=.2cm]{$-\boxempty$}(q00c)
(q01c)edge[loop left]node[]{$\boxempty-$}(q01c)
(q10c)edge[loop right]node[]{$-\boxempty$}(q10c);
\node(q00b)[state,right of = q00c,xshift=5cm]{$\E_{00}^{\bullet}$};
\node(q01b)[state,below left of = q00b]{$\E_{01}^{\bullet}$};
\node(q10b)[state,below right of = q00b]{$\E_{10}^{\bullet}$};
\node(q11b)[state,below right of = q01b]{$\E_{11}^{\bullet}$};
\path[->]
(q11b)edge[]node[below left]{$\boxempty-$}(q01b)
(q11b)edge[]node[below right]{$-\boxempty$}(q10b)
(q01b)edge[]node[above left]{$-\boxempty$}(q00b)
(q10b)edge[]node[above right]{$\boxempty-$}(q00b)
(q00b)edge[loop above]node[below left,xshift=-.2cm]{$\boxempty-$}node[below right,xshift=.2cm]{$-\boxempty$}(q00b)
(q01b)edge[loop left]node[]{$\boxempty-$}(q01b)
(q10b)edge[loop right]node[]{$-\boxempty$}(q10b);
\end{tikzpicture}}

Unfortunately, the edges only represent inductions but not exchanges such as we observed for the $\Diamond$ operation.
We therefore need to consider several cases through the lemmas that follow.

\begin{lemma}\label{2et3<=>4}
For any $k_0, k_1\geq 0$, we have
$$\eq_{01}^\circ(k_0)\wedge\eq_{10}^\circ(k_1)\Leftrightarrow\eq_{11}^\circ(\min(k_0,k_1)).$$
\end{lemma}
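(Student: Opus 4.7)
The plan is to establish the two implications separately, with the forward one being the substantive direction. For $\Leftarrow$, starting from $\eq_{11}^\circ(m)\colon (\Diamond\boxempty)^m=(\boxempty\Diamond)^m$, I would derive $\eq_{10}^\circ(m)$ by right-multiplying by $\boxempty$ (the left-hand side is fixed by idempotency: $(\Diamond\boxempty)^m\boxempty=(\Diamond\boxempty)^m$) and $\eq_{01}^\circ(m)$ by left-multiplying by $\Diamond\boxempty$ and then right-multiplying by $\Diamond$. Proposition \ref{k0k1<=>min} then lifts both equations to any $k_0,k_1\geq m$.

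For $\Rightarrow$, I would first establish the diagonal base case $\eq_{01}^\circ(k)\wedge\eq_{10}^\circ(k)\Rightarrow\eq_{11}^\circ(k)$. Rewriting $\eq_{10}^\circ(k)$ as $(\Diamond\boxempty)^k=\boxempty(\Diamond\boxempty)^k$ and $\eq_{01}^\circ(k)$ as $(\Diamond\boxempty)^k=(\Diamond\boxempty)^k\Diamond\boxempty\Diamond$, direct substitution gives
\[
(\Diamond\boxempty)^k=\boxempty(\Diamond\boxempty)^k\Diamond\boxempty\Diamond=\boxempty(\Diamond\boxempty)^{k+1}\Diamond=(\boxempty\Diamond)^{k+2}.
\]
On the other hand, Lemma \ref{lm-pair} applied to $\eq_{01}^\circ(k)$ supplies $(\Diamond\boxempty)^k=(\Diamond\boxempty)^{k+2}$, and $\Diamond$-conjugation of this yields $(\boxempty\Diamond)^k=(\boxempty\Diamond)^{k+2}$; chaining the two identities produces $(\Diamond\boxempty)^k=(\boxempty\Diamond)^{k+2}=(\boxempty\Diamond)^k$, which is $\eq_{11}^\circ(k)$.

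To reduce the general case to the diagonal, set $m=\min(k_0,k_1)$. The period-$2$ identity $(\Diamond\boxempty)^j=(\Diamond\boxempty)^{j+2}$ for $j\geq m$ (from Lemma \ref{lm-pair}) and its $\Diamond$-conjugate $(\boxempty\Diamond)^j=(\boxempty\Diamond)^{j+2}$ let me rewrite the higher-indexed equation with parameter $m$ or $m+1$ depending on the parity of $|k_0-k_1|$. When $k_1<k_0$, both parities of $k_0-k_1$ collapse $\eq_{01}^\circ(k_0)$ directly to $\eq_{01}^\circ(m)$ (the odd case needs one extra right-multiplication by $\Diamond$), and the base case closes the argument. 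When $k_0\leq k_1$ with $k_1-k_0$ even, the same works for $\eq_{10}^\circ(k_1)\Rightarrow\eq_{10}^\circ(m)$.

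The remaining case --- $k_0\leq k_1$ with $k_1-k_0$ odd --- is the main obstacle: here the reduction yields only $\eq_{10}^\circ(m+1)$, so the base case applied at $m+1$ (using $\eq_{01}^\circ(m)\Rightarrow\eq_{01}^\circ(m+1)$) delivers $\eq_{11}^\circ(m+1)$ rather than $\eq_{11}^\circ(m)$. The way out is a descent: right-multiplying $\eq_{11}^\circ(m+1)$ by $\Diamond$ gives $(\Diamond\boxempty)^{m+1}\Diamond=(\boxempty\Diamond)^m\boxempty$, and the left-hand side equals $(\Diamond\boxempty)^m$ by $\eq_{01}^\circ(m)$, producing the missing $\eq_{10}^\circ(m)$, after which a final invocation of the base case gives $\eq_{11}^\circ(m)$.
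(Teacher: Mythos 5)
Your proposal is correct; I checked each step (the base-case chain $(\Diamond\boxempty)^k=\boxempty(\Diamond\boxempty)^{k+1}\Diamond=(\boxempty\Diamond)^{k+2}=(\boxempty\Diamond)^k$, the parity reductions, and the descent from $\eq_{11}^\circ(m+1)$ to $\eq_{10}^\circ(m)$ all go through), and it uses the same raw ingredients as the paper, namely the period-two identity $\eq_{00}(\cdot,2)$ supplied by Lemma \ref{lm-pair} together with direct word rewriting. The organization, however, is genuinely different. The paper splits only on whether $k_0\leq k_1$ or $k_0>k_1$ and, in each case and each direction, writes a single chain of equalities for arbitrary $k_0,k_1$, absorbing the parity mismatch into an exponent shift $\varepsilon\in\{0,1\}$ carried along inside the chain; in particular its converse direction re-derives $\eq_{10}^\circ(k_1)$ (resp.\ $\eq_{01}^\circ(k_0)$) at the original, possibly larger, index by another explicit chain. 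You instead factor the forward direction through the diagonal instance $k_0=k_1$, whose proof is shorter and more transparent than the paper's chains, and then reduce the general case to it; the price is a three-way case analysis and the extra descent step when $k_0\leq k_1$ with $k_1-k_0$ odd, where the reduction initially lands at $m+1$. Your converse is arguably cleaner than the paper's: deriving both $\eq_{01}^\circ(m)$ and $\eq_{10}^\circ(m)$ directly from $\eq_{11}^\circ(m)$ and lifting with Proposition \ref{k0k1<=>min} removes the need for any case distinction there. Net effect: same depth of argument, with your version trading one long computation per case for more, but individually simpler, steps.
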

\begin{proof}
We first assume $k_0\leq k_1$. From $\eq_{01}^\circ(k_0)$, by Lemma \ref{lm-pair}, we deduce $\eq_{00}(k_0,2)$ which is equivalent to $(\boxempty\Diamond)^{k_0}=(\boxempty\Diamond)^{k_0+2}$ ($\dag$) by left and right multiplying by $\Diamond$ both sides of this last equation. As $k_1\geq k_0$, iterating $\eq_{00}(k_0,2)$, we also have $(\Diamond\boxempty)^{k_0}=(\Diamond\boxempty)^{k_1+\varepsilon}$ with $\varepsilon\in \{0,1\}$ ($\ddag$).
Then $\eq_{01}^\circ(k_0)\wedge\eq_{10}^\circ(k_1)$ implies
$$\begin{array}{rcll}
 (\Diamond\boxempty)^{k_0} & = & (\Diamond\boxempty)^{k_1+1+\varepsilon}                                        & \mbox{by }(\ddag)\\
 & = & (\boxempty\Diamond)^{k_1} \boxempty(\Diamond\boxempty)^{1+\varepsilon}\Diamond & \mbox{by } \eq_{10}^\circ(k_1)\\    & = & \boxempty(\Diamond\boxempty)^{k_1} (\Diamond\boxempty)^{1+\varepsilon}\Diamond & \\
					                         & = & \boxempty(\Diamond\boxempty)^{k_0+1}\Diamond                                   & \mbox{by } (\ddag) \\
					                               & = & 
                                        (\boxempty\Diamond)^{k_0+2}                                                    & \\
					                                    & = & (\boxempty\Diamond)^{k_0}                         & \mbox{by } (\dag)
\end{array}$$
In other words, $$\eq_{01}^\circ(k_0)\wedge\eq_{10}^\circ(k_1) \Rightarrow \eq_{11}^\circ(\min(k_0,k_1)).$$

Conversely, we deduce $\eq_{01}^\circ(k_0)$ by left multiplying by $\Diamond\boxempty\Diamond$ both sides of $\eq_{11}^\circ(k_0)$. Indeed
$$\begin{array}{rll}
\Diamond\boxempty\Diamond(\Diamond\boxempty)^{k_0}&=&\Diamond\boxempty\Diamond(\boxempty\Diamond)^{k_0}\\
(\Diamond\boxempty)^{k_0}                					&=&(\Diamond\boxempty)^{k_0+1}\Diamond               
\end{array}$$
We recall that $\eq_{01}^\circ(k_0)$ involves ($\dag$) and $\eq_{00}(k_0,2)$ and then $\eq_{00}(k_0+1,2)$ by right multipliying by $\Diamond\boxempty$.
Iterating $\eq_{00}(k_0+1,2)$, we obtain $(\Diamond\boxempty)^{k_0+\varepsilon}=(\Diamond\boxempty)^{k_1}$ with $\varepsilon\in \{0,1\}$ ($\S$). Then $\eq_{11}^\circ(k_0)$ implies

$$
\begin{array}{rcll}
(\Diamond\boxempty)^{k_1} & = & (\Diamond\boxempty)^{k_0+\varepsilon}                                              &                               \\
& = & (\Diamond\boxempty)^{k_0+1+\varepsilon}\Diamond                                    & \mbox{by } \eq_{01}^\circ(k_0)\\
	& = & (\boxempty\Diamond)^{k_0}(\Diamond\boxempty)^{1+\varepsilon}\Diamond               & \mbox{by } \eq_{11}^\circ(k_0)\\
		 & = & (\boxempty\Diamond)^{k_0}\Diamond\boxempty\Diamond(\boxempty\Diamond)^{\varepsilon}&                               \\
		   & = & (\boxempty\Diamond)^{k_0}(\boxempty\Diamond)^{\varepsilon}                         &                               \\
	 & = & (\boxempty\Diamond)^{k_0+2}(\boxempty\Diamond)^{\varepsilon}                       & \mbox{by } (\dag)             \\
		 & = & (\boxempty\Diamond)^{2+\varepsilon}(\Diamond\boxempty)^{k_0}                       & \mbox{by } \eq_{11}^\circ(k_0)\\
		& = & \boxempty(\Diamond\boxempty)^{k_0+\varepsilon}                                     &                               \\
	 & = & (\boxempty\Diamond)^{k_1}\boxempty                                                 & \mbox{by } (\S)        
\end{array}$$
In other words, $$\eq_{11}^\circ(k_0) \Rightarrow \eq_{10}^\circ(k_1).$$
Suppose now that $k_0>k_1$.
From $\eq_{10}^\circ(k_1)$, by Lemma \ref{lm-pair}, we deduce $\eq_{00}(k_1,2)$ which is equivalent to $(\boxempty\Diamond)^{k_1}=(\boxempty\Diamond)^{k_1+2}$ ($\dag$) by left and right multiplying by $\Diamond$ both sides of this last equation. As $k_1<k_0$, iterating $\eq_{00}(k_1,2)$, we also have $(\Diamond\boxempty)^{k_1}=(\Diamond\boxempty)^{k_0+\varepsilon}$ with $\varepsilon\in \{0,1\}$ ($\ddag$). Then $\eq_{01}^\circ(k_0)\wedge\eq_{10}^\circ(k_1)$ implies 
$$\begin{array}{rcll}
 (\Diamond\boxempty)^{k_1}& = & (\boxempty\Diamond)^{k_1}\boxempty                & \text{by }(\eq^{\circ}_{10}(k_1))\\
 & = & \boxempty(\Diamond\boxempty)^{k_0+\varepsilon}   & \text{by }(\ddag)\\
 & = & \boxempty(\Diamond\boxempty)^{k_0+\varepsilon+1}\Diamond & \text{by }(\eq^{\circ}_{01}(k_0))\\
 & = & \boxempty(\Diamond\boxempty)^{k_1+1}\Diamond & \text{by }(\ddag)\\
	& = & (\boxempty\Diamond)^{k_1}           & \text{by }(\dag)
\end{array}$$
In other words,
$$\eq^\circ_{01}(k_0)\wedge\eq_{10}^\circ(k_1)\Rightarrow \eq_{11}^\circ(k_1).$$
Conversely, right multiplying $\eq^{\circ}_{11}(k_1)$ by $\boxempty$, we obtain $\eq^{\circ}_{10}(k_1)$.\\
From $\eq^{\circ}_{11}(k_1)$, we obtain $\eq_{00}(k_1,2)$ by Lemma \ref{lm-pair}. Since $k_0>k_1$, $\eq_{00}(k_1,2)$ implies $\eq_{00}(k_0,2)$ and $(\Diamond\boxempty)^{k_0}=(\Diamond\boxempty)^{k_1+\varepsilon}$ ($\S$) for some $\varepsilon\in\{0,1\}$.
Then $\eq_{11}^\circ(k_1)$ implies
$$\begin{array}{rcll}
(\Diamond\boxempty)^{k_0} & = & (\Diamond\boxempty)^{k_1+2+\varepsilon} & \text{by } \eq_{00}(k_0,2) \text{ and } (\S)\\
& = & (\Diamond\boxempty)^{2+\varepsilon}(\Diamond\boxempty)^{k_1} & \\
& = & (\Diamond\boxempty)^{2+\varepsilon}(\boxempty\Diamond)^{k_1} & \text{by } \eq_{11}(k_1)\\
& = & (\Diamond\boxempty)^{1+k_1+\varepsilon}\Diamond & \\
& = & (\Diamond\boxempty)^{1+k_0}\Diamond & \text{by } (\S)\\
\end{array}$$
In other words,
$$ \eq_{11}^\circ(k_1)\Rightarrow\eq^\circ_{01}(k_0).$$
\end{proof}

\begin{lemma}\label{ptt-outil}
For every  $k, r > 0$, $i\in\{01,10\}$, we have 
$$\eq_{00}(k,2)\wedge\eq_i^\circ(k+r)\Rightarrow\eq_i^\circ(k).$$
\end{lemma}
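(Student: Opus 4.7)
The plan is to use $\eq_{00}(k,2)$ to collapse every power $(\Diamond\boxempty)^{k+j}$ with $j\geq 0$ to one of the two canonical powers $(\Diamond\boxempty)^k$ or $(\Diamond\boxempty)^{k+1}$, according to the parity of $j$, and then to substitute these simplifications into $\eq_i^\circ(k+r)$. The only branching is on the parity of $r$. First, left-multiplying $\eq_{00}(k,2)$ by $(\Diamond\boxempty)^j$ gives $\eq_{00}(k+j,2)$ for every $j\geq 0$, and iterating yields $(\Diamond\boxempty)^{k+j+2m}=(\Diamond\boxempty)^{k+j}$. Writing $r=2m+\varepsilon$ with $\varepsilon\in\{0,1\}$, this gives $(\Diamond\boxempty)^{k+r}=(\Diamond\boxempty)^{k+\varepsilon}$ and, similarly, $(\Diamond\boxempty)^{k+r+1}=(\Diamond\boxempty)^{k+1-\varepsilon}$ (with one extra application of $\eq_{00}(k,2)$ when $\varepsilon=1$).

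For $i=01$, substituting into $\eq_{01}^\circ(k+r)$ turns the equation $(\Diamond\boxempty)^{k+r}=(\Diamond\boxempty)^{k+r+1}\Diamond$ into $(\Diamond\boxempty)^{k+\varepsilon}=(\Diamond\boxempty)^{k+1-\varepsilon}\Diamond$. When $\varepsilon=0$ this is literally $\eq_{01}^\circ(k)$, and when $\varepsilon=1$ I would right-multiply by $\Diamond$ to rearrange $(\Diamond\boxempty)^{k+1}=(\Diamond\boxempty)^k\Diamond$ into $(\Diamond\boxempty)^{k+1}\Diamond=(\Diamond\boxempty)^k$, which is again $\eq_{01}^\circ(k)$.

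For $i=10$, I would first rewrite the right-hand side as $(\boxempty\Diamond)^{k+r}\boxempty=\boxempty(\Diamond\boxempty)^{k+r}$, then substitute to obtain $(\Diamond\boxempty)^{k+\varepsilon}=\boxempty(\Diamond\boxempty)^{k+\varepsilon}$. When $\varepsilon=0$ this is $\eq_{10}^\circ(k)$ directly. The odd case $\varepsilon=1$ is the main (mild) obstacle: the substitution only yields $\eq_{10}^\circ(k+1)$, not $\eq_{10}^\circ(k)$. To descend one more level, I would left-multiply by $\Diamond$ to turn it into $\boxempty(\Diamond\boxempty)^k=(\Diamond\boxempty)^{k+2}$, and then apply $\eq_{00}(k,2)$ once more to replace the right-hand side by $(\Diamond\boxempty)^k$, obtaining $\boxempty(\Diamond\boxempty)^k=(\Diamond\boxempty)^k$, i.e.\ $\eq_{10}^\circ(k)$. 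This is the one place where $\eq_{00}(k,2)$ is invoked twice, and it is the reason the hypothesis $r>0$ is needed (a second use of $\eq_{00}$ must still be available after the parity-reduction).
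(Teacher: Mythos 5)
Your proof is correct and follows essentially the same route as the paper: split on the parity of $r$ and iterate $\eq_{00}(k,2)$ to collapse the exponents. The only cosmetic difference is in the odd case, where the paper first lifts $\eq_i^\circ(k+r)$ to $\eq_i^\circ(k+r+1)$ by multiplying by $\Diamond\boxempty$ and then descends by the even amount $r+1$, whereas you descend first and repair the residual off-by-one at level $k$ with a multiplication by $\Diamond$ (plus one extra use of $\eq_{00}(k,2)$ for $i=10$); both variants are sound, though your closing remark that $r>0$ is what keeps a ``second use of $\eq_{00}$ available'' is not really the point --- the hypothesis can be reused freely, and $r>0$ merely excludes the trivial instance.
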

\begin{proof}
We have to consider the two following cases:
\begin{itemize}
	\item If $r=2\ell$ is even, then we obtain  $\eq_{00}(k,r)$ by applying  $\ell$ times $\eq_{00}(k,2)$. Hence, $$\eq_i^\circ(k+r) \wedge \eq_{00}(k,r) \Rightarrow \eq_i^\circ(k).$$
	\item If $r=2\ell+1$ is odd, then we obtain $\eq_{00}(k,r+1)$ by applying  $\ell+1$ times $\eq_{00}(k,2)$. Left (resp. right)-multiplying both members of the equation $\eq_{01}^\circ(k+r)$ (resp. $\eq_{10}^\circ(k+r)$)  by $\Diamond\boxempty$, we find $\eq_{01}^\circ(k+r+1)$ (resp. $\eq_{10}^\circ(k+r+1)$). Finally, $$\eq_i^\circ(k+r+1) \wedge \eq_{00}(k,r+1) \Rightarrow \eq_i^\circ(k).$$
\end{itemize}
\end{proof}

\begin{lemma}\label{1et2<=>2}
For every $k_0, k_1, r > 0$, $i,j\in\{01,10,11\}$, with $i\neq j$, we have
\begin{equation}\label{eq1}
\eq_{00}(k_0,2r)\wedge\eq_i^\circ(k_1)\Leftrightarrow\eq_i^\circ(\min(k_0,k_1))
\end{equation}
\begin{equation}\label{eq2}
\eq_i^\circ(k_0)\wedge\eq_j^\circ(k_1)\Leftrightarrow\eq_{i|j}^\circ(\min(k_0,k_1))
\end{equation}
where $i|j$ denotes the bitwise disjunction of $i$ and $j$.
\end{lemma}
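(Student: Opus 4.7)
My plan is to establish (\ref{eq1}) first, then use it to derive (\ref{eq2}). For the reverse direction of (\ref{eq1}), I would start from $\eq_i^\circ(m)$ with $m=\min(k_0,k_1)$: Lemma \ref{lm-pair} gives $\eq_{00}(m,2)$; multiplying both sides by $(\Diamond\boxempty)^{k_0-m}$ brings the parameter up to $k_0$, and iterating $r$ times yields $\eq_{00}(k_0,2r)$. The companion equation $\eq_i^\circ(k_1)$ is then obtained from $\eq_i^\circ(m)$ by Proposition \ref{k0k1<=>min}.

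For the forward direction of (\ref{eq1}) when $i\in\{01,10\}$, I would apply Lemma \ref{lm-pair} to $\eq_i^\circ(k_1)$ to obtain $\eq_{00}(k_1,2)$, and then Lemma \ref{lem:eq1+eq1=eq1} to $\eq_{00}(k_0,2r)\wedge\eq_{00}(k_1,2)$; since $\gcd(2r,2)=2$, this collapses to $\eq_{00}(\min(k_0,k_1),2)$. If $m=k_1$ there is nothing to do; if $m=k_0<k_1$, Lemma \ref{ptt-outil} with $k:=k_0$ and $r':=k_1-k_0>0$ produces $\eq_i^\circ(k_0)=\eq_i^\circ(m)$. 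The case $i=11$ is the main obstacle, because Lemma \ref{ptt-outil} is stated only for $i\in\{01,10\}$; my workaround is to first derive $\eq_{01}^\circ(k_1)$ and $\eq_{10}^\circ(k_1)$ from $\eq_{11}^\circ(k_1)$ by the one-step multiplications indicated in the proof of Lemma \ref{2et3<=>4}, apply the case just handled twice to obtain $\eq_{01}^\circ(m)$ and $\eq_{10}^\circ(m)$, and then invoke Lemma \ref{2et3<=>4} to conclude $\eq_{11}^\circ(m)$.

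For (\ref{eq2}), I would split according to the three unordered pairs $\{i,j\}$. The case $\{i,j\}=\{01,10\}$ (so $i\mathbin{|}j=11$) is literally Lemma \ref{2et3<=>4}. For $\{i,j\}=\{01,11\}$ or $\{10,11\}$ (still $i\mathbin{|}j=11$), write the pair as $\eq_{i'}^\circ(k_0)\wedge\eq_{11}^\circ(k_1)$ with $i'\in\{01,10\}$. The forward direction reduces to (\ref{eq1}): Lemma \ref{lm-pair} turns $\eq_{i'}^\circ(k_0)$ into $\eq_{00}(k_0,2)$, and then (\ref{eq1}) applied with $r=1$ to $\eq_{00}(k_0,2)\wedge\eq_{11}^\circ(k_1)$ gives $\eq_{11}^\circ(\min(k_0,k_1))$. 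For the backward direction, start from $\eq_{11}^\circ(m)$, deduce $\eq_{i'}^\circ(m)$ by the appropriate one-sided multiplication by $\Diamond\boxempty\Diamond$ or $\boxempty$ (as in Lemma \ref{2et3<=>4}), and then raise the parameter using Proposition \ref{k0k1<=>min} to obtain $\eq_{i'}^\circ(k_0)$ and $\eq_{11}^\circ(k_1)$.

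The only delicate point is that the proof of (\ref{eq1}) for $i=11$ must not circularly rely on (\ref{eq2}); the reduction through Lemma \ref{2et3<=>4} avoids this since that lemma was already proved. Everything else is routine bookkeeping combining Lemmas \ref{lm-pair}, \ref{lem:eq1+eq1=eq1}, \ref{ptt-outil} and Proposition \ref{k0k1<=>min}.
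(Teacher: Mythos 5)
Your proposal is correct and follows essentially the same route as the paper: both establish (\ref{eq1}) for $i\in\{01,10\}$ via Lemmas \ref{lm-pair}, \ref{lem:eq1+eq1=eq1} and \ref{ptt-outil}, handle $i=11$ by decomposing through Lemma \ref{2et3<=>4}, and reduce (\ref{eq2}) for $j=11$ to the already-proved cases using Lemma \ref{2et3<=>4} and Proposition \ref{k0k1<=>min}. The minor differences (raising parameters via Proposition \ref{k0k1<=>min} instead of explicit $\Diamond\boxempty$ multiplications, and routing the forward direction of (\ref{eq2}) through (\ref{eq1}) with $r=1$) are cosmetic.
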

\begin{proof}
We first prove Equivalence (\ref{eq1}), for $i=01$ and $i=10$.
\begin{itemize}
\item Assume that $\eq_{00}(k_0,2r)\wedge\eq_i^\circ(k_1)$ occurs.
If $\min(k_0,k_1)=k_1$, the result is obvious. Otherwise, we have
$$ \eq_i^\circ(k_1) \Rightarrow \eq_{00}(k_1,2) \mbox{ (by  Lemma \ref{lm-pair})}$$
then
$$ \eq_{00}(k_1,2) \wedge \eq_{00}(k_0,2r) \Rightarrow \eq_{00}(k_0,2) \mbox{ (by Lemma \ref{lem:eq1+eq1=eq1})}$$
and finally
$$ \eq_{00}(k_0,2) \wedge \eq_i^\circ(k_1) \Rightarrow \eq_i^\circ(k_0) \mbox{ (by Lemma \ref{ptt-outil})}$$
\item Conversely, suppose that $\eq_i^\circ(\min(k_0,k_1))$ occurs.
If $\min(k_0,k_1)=k_0$, we have $\eq_i^\circ(k_0) \Rightarrow \eq_{00}(k_0,2)$ by Lemma \ref{lm-pair} 
then, by repeating $r$ times $\eq_{00}(k_0,2)$, we obtain $\eq_{00}(k_0,2r)$. Furthermore, left (resp. right)-multiplying $k_1-k_0$ times both members of the equation $\eq_{01}^\circ(k_0)$ (resp. $\eq_{10}^\circ(k_0)$) by $\Diamond\boxempty$, we obtain $\eq_{01}^\circ(k_1)$ (resp. $\eq_{10}^\circ(k_1)$).\\
Otherwise, we trivially obtain $\eq_i^\circ(k_1)$. Moreover, Lemma \ref{lm-pair}  gives us $\eq_{00}(k_1,2)$ and, by left (resp. right)-multiplying $k_0-k_1$ times both members of the equation $\eq_{01}^\circ(k_1)$ (resp. $\eq_{10}^\circ(k_1)$) by $\Diamond\boxempty$, we obtain $\eq_{01}^\circ(k_0)$ (resp. $\eq_{10}^\circ(k_0)$). Finally, we have $\eq_{00}(k_1,2)\wedge\eq_i^\circ(k_0)\Rightarrow\eq_i^\circ(k_1)$ by Lemma \ref{ptt-outil}.
\end{itemize}
We deduce Equivalence (\ref{eq1}) for $i=11$ because
$$\begin{array}{rlll}
\eq_{00}(k_0,2r)\wedge\eq_{11}^\circ(k_1) & \Leftrightarrow & \eq_{00}(k_0,2r)\wedge\eq_{01}^\circ(k_1)\wedge\eq_{10}^\circ(k_1) & (\mbox{Lemma }\ref{2et3<=>4})\\
 & \Leftrightarrow & \eq_{01}^\circ(\min(k_0,k_1))\wedge\eq_{10}^\circ(\min(k_0,k_1)) & \mbox{(previous equiv.)}\\
 & \Leftrightarrow & \eq_{11}^\circ(\min(k_0,k_1)) & (\mbox{Lemma } \ref{2et3<=>4})
\end{array}$$
Equivalence (\ref{eq2}) for $i=01$ and $j=10$ is shown in Lemma \ref{2et3<=>4}. Finally, we obtain  Equivalence (\ref{eq2}) when $j=11$ for $i=01$ or $i=10$ by
$$\begin{array}{rlll}
\eq_i^\circ(k_0)\wedge\eq_{11}^\circ(k_1) & \Leftrightarrow & \eq_i^\circ(k_0)\wedge\eq_{01}^\circ(k_1)\wedge\eq_{10}^\circ(k_1) & (\mbox{Lemma }\ref{2et3<=>4})\\
 & \Leftrightarrow & \eq_i^\circ(\min(k_0,k_1))\wedge\eq_{i'}^\circ(k_1) & \mbox{(Proposition \ref{k0k1<=>min})}\\
 & \Leftrightarrow & \eq_{11}^\circ(\min(k_0,k_1)) & (\mbox{Lemma } \ref{2et3<=>4})
\end{array}$$
where $i'=10$ if $i=01$ and $i'=01$ if $i=10$.
\end{proof}

\begin{lemma}\label{lm-eqi+eq1=eqi}
For every $i\in\{01,10,11\}$, we have $$\eq_i^\bullet(k)\Leftrightarrow\eq_i^\circ(k)\wedge\eq_{00}(k,1).$$
\end{lemma}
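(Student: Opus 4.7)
The plan is to prove both directions of the equivalence case by case for $i \in \{01, 10, 11\}$, exploiting Lemma \ref{lm-impair} for the $\eq_{00}(k,1)$ component of the forward direction, and combinations of left/right multiplications with substitutions elsewhere. The three cases naturally chain: $i=10$ and $i=01$ can be handled directly, while $i=11$ reduces to them via Lemma \ref{2et3<=>4}.

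For the forward direction $\eq_i^\bullet(k) \Rightarrow \eq_i^\circ(k) \wedge \eq_{00}(k,1)$, the half concerning $\eq_{00}(k,1)$ is exactly Lemma \ref{lm-impair}. To derive $\eq_i^\circ(k)$ I would proceed directly: when $i=01$, substitute $(\Diamond\boxempty)^{k+1}$ for $(\Diamond\boxempty)^k$ on the right of $\eq_{01}^\bullet(k):(\Diamond\boxempty)^k = (\Diamond\boxempty)^k\Diamond$ using $\eq_{00}(k,1)$; when $i=10$, left-multiply $\eq_{10}^\bullet(k):(\Diamond\boxempty)^k = \boxempty(\Diamond\boxempty)^{k-1}$ by $\boxempty$ and use $\boxempty^2 = \boxempty$; when $i=11$, rewrite $\eq_{11}^\bullet(k)$ as $(\Diamond\boxempty)^k = \boxempty(\Diamond\boxempty)^k\Diamond$, left-multiply by $\Diamond$ to get $\boxempty(\Diamond\boxempty)^{k-1} = (\Diamond\boxempty)^{k+1}\Diamond$, reduce the right side via $\eq_{00}(k,1)$, and right-multiply by $\Diamond$ to arrive at $(\boxempty\Diamond)^k = (\Diamond\boxempty)^k$.

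For the backward direction, the case $i=01$ is immediate by substituting $\eq_{00}(k,1)$ into $\eq_{01}^\circ(k):(\Diamond\boxempty)^k = (\Diamond\boxempty)^{k+1}\Diamond$. The case $i=10$ is the principal technical step: I left-multiply $\eq_{10}^\circ(k):(\Diamond\boxempty)^k = \boxempty(\Diamond\boxempty)^k$ by $\Diamond$; the LHS becomes $\boxempty(\Diamond\boxempty)^{k-1}$ via $\Diamond^2 = \varepsilon$, and the RHS becomes $(\Diamond\boxempty)^{k+1}$, which reduces to $(\Diamond\boxempty)^k$ by $\eq_{00}(k,1)$, yielding $\eq_{10}^\bullet(k)$. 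For $i=11$, I bootstrap: Lemma \ref{2et3<=>4} gives $\eq_{11}^\circ(k) \Rightarrow \eq_{01}^\circ(k)$, so the already-proven $i=01$ case furnishes $\eq_{01}^\bullet(k):(\Diamond\boxempty)^k = (\Diamond\boxempty)^k\Diamond$; then the chain
$(\Diamond\boxempty)^k = (\Diamond\boxempty)^k\Diamond = (\Diamond\boxempty)^k\boxempty\Diamond = (\boxempty\Diamond)^k\boxempty\Diamond = (\boxempty\Diamond)^{k+1}$,
using $(\Diamond\boxempty)^k\boxempty = (\Diamond\boxempty)^k$ (from $\boxempty^2=\boxempty$) and $\eq_{11}^\circ(k)$, delivers $\eq_{11}^\bullet(k)$.

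The hard part is the $i=10$ backward direction: $\eq_{00}(k,1)$ relates $(\Diamond\boxempty)^k$ only to higher powers, whereas $\eq_{10}^\bullet(k)$ involves $(\Diamond\boxempty)^{k-1}$, so naive substitution fails. The trick of left-multiplying by $\Diamond$ and collapsing $\Diamond^2 = \varepsilon$ shifts the exponent downward and supplies the missing bridge; once this case is in hand, the $i=11$ case follows for free by chaining through Lemma \ref{2et3<=>4} and the $i=01$ case.
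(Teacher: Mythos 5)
Your proof is correct and follows essentially the same route as the paper's: Lemma \ref{lm-impair} supplies the $\eq_{00}(k,1)$ component of the forward direction, and the remaining equivalences $\eq_i^\bullet(k)\Leftrightarrow\eq_i^\circ(k)$ modulo $\eq_{00}(k,1)$ are obtained, as in the paper, by short chains of multiplications by the generators together with substitutions of $\eq_{00}(k,1)$. The only cosmetic differences are that you treat the two implications separately (the paper runs a single reversible chain for each $i$) and that you reduce the backward $i=11$ case to the $i=01$ case via Lemma \ref{2et3<=>4}; every individual step you describe checks out.
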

\begin{proof}
From $\eq_i^\bullet(k)$, we deduce $\eq_{00}(k,1)$ by Lemma \ref{lm-impair}. The three cases are detailed below:
\begin{itemize}
\item If $i=01$ then, by applying $\eq_{00}(k,1)$ to the right-hand side of $\eq_i^\bullet(k)$, we immediately obtain $\eq_i^\circ(k)$.
\item If  $i=10$ then $\eq_{10}^\bullet(k)$ is equivalent to
$$
\begin{array}{rcll}
 (\Diamond\boxempty)^k & = & (\boxempty\Diamond)^{k-1}\boxempty                 & \\
 & = & \Diamond\Diamond(\boxempty\Diamond)^{k-1}\boxempty & \\
& = & \Diamond(\Diamond\boxempty)^{k}                    & \\
 & = & \Diamond(\Diamond\boxempty)^{k+1}                  & \mbox{by }\eq_{00}(k,1)\\
 & = & \Diamond\Diamond(\boxempty\Diamond)^{k}\boxempty   & \\
 & = & (\boxempty\Diamond)^{k}\boxempty           
\end{array}
$$
Hence,
\[\eq_{10}^\bullet(k)\Leftrightarrow \eq_{10}^\circ(k).\]
\item If $i=11$ then $\eq_{11}^\bullet(k)$ is equivalent to
$$
\begin{array}{rcll}
 (\Diamond\boxempty)^k & = & (\boxempty\Diamond)^{k+1}                          & \\
 & = & \Diamond\Diamond(\boxempty\Diamond)^{k+1}          & \\
 & = & \Diamond(\Diamond\boxempty)^{k+1}\Diamond          & \\
& = & \Diamond(\Diamond\boxempty)^{k}\Diamond            & \mbox{by }\eq_{00}(k,1)\\
 & = & \Diamond\Diamond(\boxempty\Diamond)^{k}            & \\
 = & (\boxempty\Diamond)^{k}                            & \Leftrightarrow  \eq_{11}^\circ(k).
\end{array}
$$
Hence,
\[\eq_{11}^\bullet(k)\Leftrightarrow \eq_{11}^\circ(k).\]
\end{itemize}
\end{proof}

\begin{lemma}\label{1et2bullet<=>2}
For every $k_0, k_1, r > 0$, $i,j\in\{01,10,11\}$, with $i\neq j$, we have
\begin{equation}\label{eq1b}
\eq_{00}(k_0,2r+1)\wedge\eq_i^\bullet(k_1)\Leftrightarrow\eq_i^\bullet(\min(k_0,k_1))
\end{equation}
\begin{equation}\label{eq2b}
\eq_i^\bullet(k_0)\wedge\eq_j^\bullet(k_1)\Leftrightarrow\eq_{i|j}^\bullet(\min(k_0,k_1))
\end{equation}
\end{lemma}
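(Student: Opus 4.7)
The plan is to reduce both equivalences to their circle counterparts already proved in Lemma \ref{1et2<=>2}, using Lemma \ref{lm-eqi+eq1=eqi} as the bridge. That lemma asserts $\eq_i^\bullet(k)\Leftrightarrow\eq_i^\circ(k)\wedge\eq_{00}(k,1)$, so each bullet equation packages together a circle equation and an odd-step $\eq_{00}$-equation. The strategy is to split every bullet equation into these two ingredients, treat the $\eq_{00}$-parts with Lemma \ref{lem:eq1+eq1=eq1} and the circle-parts with Lemma \ref{1et2<=>2}, and then reassemble the resulting pieces through Lemma \ref{lm-eqi+eq1=eqi} once more.

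For Equivalence (\ref{eq1b}), I first decompose $\eq_i^\bullet(k_1)$ into $\eq_i^\circ(k_1)\wedge\eq_{00}(k_1,1)$. Lemma \ref{lem:eq1+eq1=eq1} then merges the two $\eq_{00}$-equations into $\eq_{00}(\min(k_0,k_1),\gcd(2r+1,1))=\eq_{00}(\min(k_0,k_1),1)$, whose iteration supplies $\eq_{00}(\min(k_0,k_1),2)$. Combining this even-step equation with $\eq_i^\circ(k_1)$ through Equivalence (\ref{eq1}) of Lemma \ref{1et2<=>2} gives $\eq_i^\circ(\min(k_0,k_1))$, and recombining with $\eq_{00}(\min(k_0,k_1),1)$ via Lemma \ref{lm-eqi+eq1=eqi} yields $\eq_i^\bullet(\min(k_0,k_1))$. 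The converse runs the chain backwards: $\eq_i^\bullet(\min(k_0,k_1))$ splits through Lemma \ref{lm-eqi+eq1=eqi}; the converse part of Lemma \ref{lem:eq1+eq1=eq1} extracts both $\eq_{00}(k_0,2r+1)$ and $\eq_{00}(k_1,1)$ from $\eq_{00}(\min(k_0,k_1),1)$; Proposition \ref{k0k1<=>min} propagates $\eq_i^\circ(\min(k_0,k_1))$ up to $\eq_i^\circ(k_1)$; and a final application of Lemma \ref{lm-eqi+eq1=eqi} reassembles $\eq_i^\bullet(k_1)$.

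For Equivalence (\ref{eq2b}), the same mechanism applies. Decomposing both sides gives $\eq_i^\circ(k_0)\wedge\eq_j^\circ(k_1)\wedge\eq_{00}(k_0,1)\wedge\eq_{00}(k_1,1)$; Lemma \ref{lem:eq1+eq1=eq1} collapses the two $\eq_{00}$-equations into $\eq_{00}(\min(k_0,k_1),1)$; Equivalence (\ref{eq2}) of Lemma \ref{1et2<=>2} collapses the two circle equations into $\eq_{i|j}^\circ(\min(k_0,k_1))$; and Lemma \ref{lm-eqi+eq1=eqi} reassembles these into $\eq_{i|j}^\bullet(\min(k_0,k_1))$. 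The converse direction is by the same steps in reverse, using the converse parts of Lemma \ref{lem:eq1+eq1=eq1} and Lemma \ref{1et2<=>2}.

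I expect no genuinely new algebraic computation to be needed beyond what was done in the circle case, so the main obstacle is bookkeeping: checking that the odd gcd is preserved throughout. Since $\gcd(2r+1,1)=1$ and $\gcd(1,1)=1$, every merge via Lemma \ref{lem:eq1+eq1=eq1} lands back in $\E_{00}^\bullet$, which is exactly the ingredient required by Lemma \ref{lm-eqi+eq1=eqi} to rebuild a bullet equation. The care point is simply to keep the odd-step $\eq_{00}(\min(k_0,k_1),1)$ alive at every stage of the argument so that it remains available for the final recombination.
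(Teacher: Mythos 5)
Your proof is correct and follows essentially the same route as the paper's: both split each bullet equation via Lemma \ref{lm-eqi+eq1=eqi}, merge the $\eq_{00}$-components with Lemma \ref{lem:eq1+eq1=eq1}, handle the circle components with Lemma \ref{1et2<=>2}, and reassemble with Lemma \ref{lm-eqi+eq1=eqi}. The only difference is cosmetic bookkeeping in (\ref{eq1b}) --- the paper doubles $2r+1$ to an even step before invoking Lemma \ref{1et2<=>2}, whereas you first take the $\gcd$ to reach $\eq_{00}(\min(k_0,k_1),1)$ and then double --- and both orderings are valid.
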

\begin{proof}
We obtain (\ref{eq1b}) by applying the following sequence of equivalences:
$$\begin{array}{rll}
                & \eq_{00}(k_0,2r+1)\wedge\eq_i^\bullet(k_1) & \\
\Leftrightarrow & \eq_{00}(k_0,2(2r+1))\wedge\eq_i^\bullet(k_1) & (\mbox{iterating }\eq_{00}(k_0,2r+1))\\
\Leftrightarrow & \eq_{00}(k_0,2(2r+1))\wedge\eq_i^\circ(k_1)\wedge\eq_{00}(k_1,1) & \mbox{(Lemma \ref{lm-eqi+eq1=eqi})}\\
\Leftrightarrow & \eq_i^\circ(\min(k_0,k_1))\wedge\eq_{00}(\min(k_0,k_1),1) & \mbox{(Lemmas \ref{1et2<=>2}\mbox{ et }\ref{lem:eq1+eq1=eq1})}\\
\Leftrightarrow & \eq_i^\bullet(\min(k_0,k_1)) & \mbox{(Lemma \ref{lm-eqi+eq1=eqi})}
\end{array}$$
We obtain (\ref{eq2b}) by applying the following sequence of equivalences:
$$\begin{array}{rll}
                & \eq_i^\bullet(k_0)\wedge\eq_j^\bullet(k_1) & \\
\Leftrightarrow & \eq_i^\circ(k_0)\wedge\eq_{00}(k_0,1)\wedge\eq_j^\circ(k_1)\wedge\eq_{00}(k_1,1) & \mbox{(Lemma \ref{lm-eqi+eq1=eqi})}\\
\Leftrightarrow & \eq_{i|j}^\circ(\min(k_0,k_1))\wedge\eq_{00}(\min(k_0,k_1),1) & \mbox{(Lemmas \ref{1et2<=>2}\mbox{ and }\ref{lem:eq1+eq1=eq1})}\\
\Leftrightarrow & \eq_{i|j}^\bullet(\min(k_0,k_1)) & \mbox{(Lemma \ref{lm-eqi+eq1=eqi})}
\end{array}$$
\end{proof}

We have considered all combinations of equations belonging to sets suffixed with either $\circ$ or $\bullet$.
What remains to be studied are combinations.

\begin{lemma}\label{melange}
For every $k_0, k_1, r > 0$, $i,j\in\{01,10,11\}$, with $i\neq j$, we have
\begin{equation}\label{eq1m}
\eq_i^\bullet(k_0)\wedge\eq_{00}(k_1,2r)\Leftrightarrow\eq_i^\bullet(\min(k_0,k_1))
\end{equation}
\begin{equation}\label{eq2m}
\eq_{00}(k_0,2r+1)\wedge\eq_i^\circ(k_1)\Leftrightarrow\eq_i^\bullet(\min(k_0,k_1))
\end{equation}
\begin{equation}\label{eq3m}
\eq_i^\circ(k_0)\wedge\eq_j^\bullet(k_1)\Leftrightarrow\eq_{i|j}^\bullet(\min(k_0,k_1))
\end{equation}
\end{lemma}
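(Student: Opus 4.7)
The overall strategy is to push each bullet predicate through Lemma \ref{lm-eqi+eq1=eqi}, which rewrites $\eq_i^\bullet(k)$ as the pair $\eq_i^\circ(k)\wedge\eq_{00}(k,1)$. Once every bullet is split this way, each equivalence reduces to a routine bookkeeping combination of Lemma \ref{lem:eq1+eq1=eq1} (merging two $\eq_{00}$'s via gcd), Lemma \ref{1et2<=>2} (combining an $\eq_{00}$ with even parameter and a $\circ$-equation, or two $\circ$-equations), and the monotonicity $\eq_i^s(m)\Rightarrow\eq_i^s(k)$ and $\eq_{00}(m,\ell)\Rightarrow\eq_{00}(k,\ell)$ for $k\geq m$, which is already established inside the proofs of Proposition \ref{k0k1<=>min} and Lemma \ref{lem:eq1+eq1=eq1}. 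Throughout, set $m=\min(k_0,k_1)$.

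For (\ref{eq1m}), expanding $\eq_i^\bullet(k_0)$ turns the left-hand side into $\eq_i^\circ(k_0)\wedge\eq_{00}(k_0,1)\wedge\eq_{00}(k_1,2r)$; Lemma \ref{lem:eq1+eq1=eq1} merges the two $\eq_{00}$ pieces into $\eq_{00}(m,\gcd(1,2r))=\eq_{00}(m,1)$. Iterating it yields $\eq_{00}(m,2)$, so Lemma \ref{1et2<=>2}(\ref{eq1}) contracts $\eq_{00}(m,2)\wedge\eq_i^\circ(k_0)$ into $\eq_i^\circ(m)$, and reassembling through Lemma \ref{lm-eqi+eq1=eqi} gives $\eq_i^\bullet(m)$. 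For (\ref{eq2m}), use Lemma \ref{lm-pair} to extract $\eq_{00}(k_1,2)$ from $\eq_i^\circ(k_1)$; Lemma \ref{lem:eq1+eq1=eq1} combines it with $\eq_{00}(k_0,2r+1)$ to produce $\eq_{00}(m,\gcd(2r+1,2))=\eq_{00}(m,1)$, after which the same contraction via Lemma \ref{1et2<=>2}(\ref{eq1}) and recomposition via Lemma \ref{lm-eqi+eq1=eqi} delivers $\eq_i^\bullet(m)$. For (\ref{eq3m}), split $\eq_j^\bullet(k_1)$ into $\eq_j^\circ(k_1)\wedge\eq_{00}(k_1,1)$; Lemma \ref{1et2<=>2}(\ref{eq2}) merges $\eq_i^\circ(k_0)\wedge\eq_j^\circ(k_1)$ into $\eq_{i|j}^\circ(m)$, and Lemma \ref{lm-pair} yields an $\eq_{00}(m,2)$ which, combined with $\eq_{00}(k_1,1)$ through Lemma \ref{lem:eq1+eq1=eq1}, produces $\eq_{00}(m,1)$; a final application of Lemma \ref{lm-eqi+eq1=eqi} gives $\eq_{i|j}^\bullet(m)$.

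The converse in each case runs the same machinery backwards: starting from $\eq_i^\bullet(m)$ or $\eq_{i|j}^\bullet(m)$, peel off $\eq_{00}(m,1)$ via Lemma \ref{lm-eqi+eq1=eqi}, iterate to obtain $\eq_{00}(m,n)$ for the required $n$ and then propagate to $\eq_{00}(k_0,2r+1)$ and $\eq_{00}(k_1,2r)$ by monotonicity in the first argument, and use the monotonicity $\eq_i^\circ(m)\Rightarrow\eq_i^\circ(k)$ together with Lemma \ref{2et3<=>4} (to split $\eq_{11}^\circ$ into $\eq_{01}^\circ\wedge\eq_{10}^\circ$ when $i|j=11$) to recover the original circ-equations at their prescribed indices, which reassemble into $\eq_j^\bullet(k_1)$ via Lemma \ref{lm-eqi+eq1=eqi}. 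The main obstacle is purely bookkeeping: one must verify at each merge that the parity of the resulting $\eq_{00}$-parameter is odd so that Lemma \ref{lm-eqi+eq1=eqi} can re-introduce a bullet, and that the minima on the two sides agree after contraction. No new computation beyond what is already encoded in the preceding lemmas is required.
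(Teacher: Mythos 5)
Your proposal is correct and follows essentially the same route as the paper: expand each bullet equation into a $\circ$-equation plus an $\eq_{00}(\cdot,1)$ via Lemma \ref{lm-eqi+eq1=eqi}, merge the resulting pieces with Lemmas \ref{lem:eq1+eq1=eq1} and \ref{1et2<=>2} (using $\gcd$ and $\min$), and reassemble. The only differences are the order in which the merges are performed (e.g.\ in (\ref{eq2m}) you extract $\eq_{00}(k_1,2)$ by Lemma \ref{lm-pair} before taking the $\gcd$, whereas the paper first contracts the $\circ$-equation), which does not affect validity.
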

\begin{proof} Let us first show equality (\ref{eq1m}).
$$\begin{array}{rll}
                & \eq_i^\bullet(k_0)\wedge\eq_{00}(k_1,2r) & \\
\Leftrightarrow & \eq_i^\circ(k_0)\wedge\eq_{00}(k_0,1)\wedge\eq_{00}(k_1,2r) & \mbox{(Lemma \ref{lm-eqi+eq1=eqi})}\\
\Leftrightarrow & \eq_i^\circ(\min(k_0,k_1))\wedge\eq_{00}(k_0,1)\wedge\eq_{00}(k_1,2r) & \mbox{(Lemma \ref{1et2<=>2})}\\
\Leftrightarrow & \eq_i^\circ(\min(k_0,k_1))\wedge\eq_{00}(\min(k_0,k_1),1) & \mbox{(Lemma \ref{lem:eq1+eq1=eq1})}\\
\Leftrightarrow & \eq_i^\bullet(\min(k_0,k_1)) & \mbox{(Lemma \ref{lm-eqi+eq1=eqi})}
\end{array}$$

For equality (\ref{eq2m}), we have
$$\begin{array}{rll}
                & \eq_{00}(k_0,2r+1)\wedge\eq_i^\circ(k_1) & \\
\Leftrightarrow & \eq_{00}(k_0,2r+1)\wedge\eq_{00}(k_0,2(2r+1))\wedge\eq_i^\circ(k_1) & (\text{iterating }\eq_{00}(k_0,2r+1))\\
\Leftrightarrow & \eq_{00}(k_0,2r+1)\wedge\eq_i^\circ(\min(k_0,k_1)) & \mbox{(Lemma \ref{1et2<=>2})}\\
\Leftrightarrow & \eq_{00}(k_0,2r+1)\wedge\eq_{00}(\min(k_0,k_1),2)\wedge\eq_i^\circ(\min(k_0,k_1)) & \mbox{(Lemma \ref{lm-pair})}\\
\Leftrightarrow & \eq_{00}(\min(k_0,k_1),1)\wedge\eq_i^\circ(\min(k_0,k_1)) & \mbox{(Lemma \ref{lem:eq1+eq1=eq1})}\\
\Leftrightarrow & \eq_i^\bullet(\min(k_0,k_1)) & \mbox{(Lemma \ref{lm-eqi+eq1=eqi})}
\end{array}$$

And for equality (\ref{eq3m}), we have
$$\begin{array}{rll}
                & \eq_i^\circ(k_0)\wedge\eq_j^\bullet(k_1) & \\
\Leftrightarrow & \eq_i^\circ(k_0)\wedge\eq_{00}(k_0,2)\wedge\eq_j^\circ(k_1)\wedge\eq_{00}(k_1,1) & (\mbox{Lemmas \ref{lm-pair} et \ref{lm-eqi+eq1=eqi}})\\
\Leftrightarrow & \eq_{i|j}^\circ(\min(k_0,k_1))\wedge\eq_{00}(\min(k_0,k_1),1) & (\mbox{Lemmas \ref{1et2<=>2} et \ref{lem:eq1+eq1=eq1}})\\
\Leftrightarrow & \eq_{i|j}^\bullet(\min(k_0,k_1)) & \mbox{(Lemma \ref{lm-eqi+eq1=eqi})}
\end{array}$$
\end{proof}

Actually, the set of classes $\E_i^s$, $i\in\{00,01,10,11\}$, $s\in\{\circ,\bullet\}$ constitutes a Boolean algebra if we provide it with the following order relation: $E<F$ if and only if for every $f\in F$, there exists $e\in E$ such that $e \Rightarrow f$. The structure of this algebra can be modeled by the lattice of Figure \ref{latticeEqu}.

\begin{figure}[ht]
\centerline{
\begin{tikzpicture}[node distance=1.5cm, bend angle=15]
\node(q1c)[state]{$\E_{00}^{\circ}$};
\node(q1b)[state,below of = q1c]{$\E_{00}^{\bullet}$};
\node(q2c)[state,left of = q1b]{$\E_{01}^{\circ}$};
\node(q3c)[state,right of = q1b]{$\E_{10}^{\circ}$};
\node(q4c)[state,below of = q1b]{$\E_{11}^{\circ}$};
\node(q2b)[state,left of = q4c]{$\E_{01}^{\bullet}$};
\node(q3b)[state,right of = q4c]{$\E_{10}^{\bullet}$};
\node(q4b)[state,below of = q4c]{$\E_{11}^{\bullet}$};
\path[->]
(q4b)edge[]node[]{}(q2b)
(q4b)edge[]node[]{}(q4c)
(q4b)edge[]node[]{}(q3b)
(q2b)edge[]node[]{}(q2c)
(q2b)edge[]node[]{}(q1b)
(q4c)edge[]node[]{}(q2c)
(q4c)edge[]node[]{}(q3c)
(q3b)edge[]node[]{}(q3c)
(q3b)edge[]node[]{}(q1b)
(q2c)edge[]node[]{}(q1c)
(q1b)edge[]node[]{}(q1c)
(q3c)edge[]node[]{}(q1c);
\end{tikzpicture}}
\caption{Lattice of the $\E^s_i$ classes}\label{latticeEqu}
\end{figure}
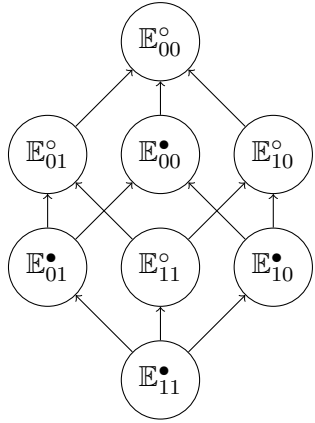

In this lattice, every pair of elements $E, F$ has a lower bound, denoted $E\wedge F$. The above results show that every pair of equations belonging to $E\times F$ has an equivalent equation belonging to $E\wedge F$.

\subsection{Theorem of reduction}
To summarize the results of this section, any presentation of a strict 2-PIM monoid reduces to a presentation with only one additional equation.
\begin{theorem}\label{ThRed}
Any strict 2-PIM has a presentation reduced to three equations (including $\Diamond^2=\Id $ and $\boxempty^2=\boxempty$).
\end{theorem}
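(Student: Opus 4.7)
The plan is to proceed by induction on the number $n$ of additional equations in a given presentation $\langle \boxempty, \Diamond \mid \boxempty^2=\boxempty, \Diamond^2=\Id, \eq_1, \ldots, \eq_n\rangle$ of a strict 2-PIM, the cases $n \leq 1$ being trivial. The entire engine of Sections 3 and 4 is designed precisely to make the inductive step go through: it provides, for any pair of additional equations, a single equivalent equation.

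First I would put each $\eq_i$ in normal form. Since every word in $\{\boxempty,\Diamond\}^*$ reduces modulo $\{\boxempty^2=\boxempty, \Diamond^2=\Id\}$ to some $\boxempty^d(\Diamond\boxempty)^k\Diamond^f$ with $d,f \in \{0,1\}$ and $k \geq 0$, each $\eq_i$ falls into one of the ten types of Table \ref{eqDiogenes}. If any $\eq_i$ is degenerate (i.e.\ one of the seven equations listed in Lemma \ref{LemmaCasDegeneres}), then the monoid is monogenic, which forces $\boxempty = \Id$ or $\Diamond = \Id$ in the quotient. In that event the whole presentation is equivalent to $\langle \boxempty, \Diamond \mid \boxempty^2=\boxempty, \Diamond^2=\Id, \boxempty=\Id\rangle$ (or the analogous one with $\Diamond=\Id$), which already has the required form with three equations.

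Assume from now on that no $\eq_i$ is degenerate. Following the strongly connected components of the exchangeability graph of Section 3.3, each $\eq_i$ is equivalent to an equation in one of $\Eq_0, \Eq_1, \Eq_2, \Eq_3$, and by Proposition \ref{lm-Eqi<->Ei} to an equation in some class $\E_i^s$ with $i \in \{00,01,10,11\}$ and $s \in \{\circ, \bullet\}$. The inductive step then amounts to merging $\eq_1$ and $\eq_2$ into a single equivalent equation, and this is exactly what the lemmas of Section 4 deliver: Propositions \ref{E00XE00<=>E00} and \ref{k0k1<=>min} handle conjunctions inside the same class, while Lemmas \ref{2et3<=>4}, \ref{1et2<=>2}, \ref{1et2bullet<=>2} and \ref{melange} handle all cross-class combinations. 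Together they realise the meet operation of the lattice of Figure \ref{latticeEqu}: any two equations in classes $E, F$ are equivalent to a single equation in $E \wedge F$. Replacing $\eq_1, \eq_2$ by the merged equation lowers the count by one, and the induction closes.

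The substantive work is already done in Section 4; the only real obstacle here is administrative, namely verifying that the available lemmas genuinely cover every pair $(\E_i^s, \E_j^t)$ so the inductive step is always applicable. This reduces to checking each edge of the lattice of Figure \ref{latticeEqu}, which is exactly what the sequence of lemmas from Lemma \ref{lem:eq1+eq1=eq1} through Lemma \ref{melange} is engineered to do exhaustively.
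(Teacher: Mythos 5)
Your proposal is correct and follows essentially the same route as the paper: the paper's proof is simply the observation that Propositions \ref{E00XE00<=>E00} and \ref{k0k1<=>min} together with Lemmas \ref{1et2<=>2}, \ref{1et2bullet<=>2} and \ref{melange} let one merge any pair of additional equations into a single equivalent one, and your induction on the number of equations (after normal-forming via Table \ref{eqDiogenes}, discarding degenerate cases via Lemma \ref{LemmaCasDegeneres}, and passing to the classes $\E_i^s$ via Proposition \ref{lm-Eqi<->Ei}) is exactly the scaffolding the paper leaves implicit. Your explicit treatment of the degenerate/monogenic case is a reasonable addition that the paper's one-line proof glosses over.
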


\begin{proof}
The result follows from  Propositions \ref{E00XE00<=>E00}, \ref{k0k1<=>min} and  Lemmas \ref{1et2<=>2}, \ref{1et2bullet<=>2} and \ref{melange}.
\end{proof}

We summarize in Table \ref{tab-resume} the classes of strict 2-PIMs  with their unique relation if it exists. All these cases are non-degenerate for positive $k$ and $r$.
\begin{table}[h]
$$\begin{array}{l|l}
\text{Monoid}&\text{Additional relation}\\
\hline
M_\emptyset&\text{None}\\
\hline
M_{00}(k,r)&(\Diamond\boxempty)^k=(\Diamond\boxempty)^{k+r}\\
\hline
M_{01}^\circ(k)&(\Diamond\boxempty)^k=(\Diamond\boxempty)^{k+1}\Diamond\\
\hline
M_{01}^\bullet(k)&(\Diamond\boxempty)^k=(\Diamond\boxempty)^{k}\Diamond\\
\hline
M_{10}^\circ(k)&(\Diamond\boxempty)^k=(\boxempty\Diamond)^{k}\boxempty\\
\hline
M_{10}^\bullet(k)&(\Diamond\boxempty)^k=(\boxempty\Diamond)^{k-1}\boxempty\\
\hline
M_{11}^\circ(k)&(\Diamond\boxempty)^k=(\boxempty\Diamond)^{k}\\
\hline
M_{11}^\bullet(k)&(\Diamond\boxempty)^k=(\boxempty\Diamond)^{k+1}\\
\end{array}$$
\caption{Classes of strict $2$-PIMs}\label{tab-resume}
\end{table}

This theorem is not strictly speaking a classification result. It remains to be proved that the cases identified are indeed distinct.

\section{Lattice structure on sets of strict 2-PIMs}

We set, for all $i\in\{00,01,10,11\}$ and $s\in \{\circ,\bullet,\varepsilon\}$
\[
\M_i^s=\{\monoide{eq}\mid\mathrm{eq}\in\E_i^s, \mathrm{E}\subset\Eq\}
\]
In other words, $\M_i^s$ (resp. $\M_i$ if $s=\varepsilon$) is the set of monoids for which there exists a presentation containing a $\E_i^s$  (resp. $\E_i$) equation. 
\subsection{An inclusion diagram}
The following lemma allows us to show the inclusions of the lattice of Figure \ref{latticeMonoide}, while  Lemma \ref{lm-eqimpIneqpair} allows us to show that these inclusions are strict.
\begin{lemma}\label{lm-outil-MiSubsetMj}
Let $i,j\in\{00,01,10,11\}$ and $s,s'\in\{\circ,\bullet,\varepsilon\}$. If for any equation $\mathrm{eq}$ of $\E_i^s$ there exists an equation $\mathrm{eq}'$ of $\E_j^{s'}$, such that $\mathrm{eq}\Rightarrow \mathrm{eq}'$
then $\M_i^s\subseteq \M_j^{s'}$.
\end{lemma}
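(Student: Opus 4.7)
The plan is straightforward once the hypothesis is unpacked correctly. I would proceed as follows.

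Let $M \in \M_i^s$ be arbitrary. By definition of $\M_i^s$, there exist $\mathrm{eq} \in \E_i^s$ and a subset $E \subset \Eq$ such that
\[
M = \langle \boxempty, \Diamond \mid \{\boxempty^2=\boxempty,\ \Diamond^2=\varepsilon,\ \mathrm{eq}\} \cup E\rangle.
\]
My goal is to exhibit $\mathrm{eq}' \in \E_j^{s'}$ and $E' \subset \Eq$ producing the same monoid $M$, which will prove $M \in \M_j^{s'}$.

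The hypothesis supplies exactly the ingredient needed: it says that to this particular $\mathrm{eq}$ corresponds some $\mathrm{eq}' \in \E_j^{s'}$ with $\mathrm{eq} \Rightarrow \mathrm{eq}'$. Here the notation $\mathrm{eq} \Rightarrow \mathrm{eq}'$ is to be read (consistently with the rest of the paper) as: $\mathrm{eq}'$ is a consequence of $\mathrm{eq}$ modulo the background relations $\boxempty^2=\boxempty$ and $\Diamond^2=\varepsilon$. In particular $\mathrm{eq}'$ holds in $M$, so appending it to the presentation does not alter the quotient:
\[
M = \langle \boxempty, \Diamond \mid \{\boxempty^2=\boxempty,\ \Diamond^2=\varepsilon,\ \mathrm{eq}',\ \mathrm{eq}\} \cup E\rangle.
\]

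It remains only to reorganize this presentation into the shape required by the definition of $\M_j^{s'}$. Setting $E' = \{\mathrm{eq}\} \cup E$, we have $E' \subset \Eq$ because $\mathrm{eq} \in \E_i^s \subset \Eq$ and $E \subset \Eq$; then
\[
M = \langle \boxempty, \Diamond \mid \{\boxempty^2=\boxempty,\ \Diamond^2=\varepsilon,\ \mathrm{eq}'\} \cup E'\rangle,
\]
which witnesses $M \in \M_j^{s'}$, as required.

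There is essentially no hard step: the only thing that must be gotten right is the subtlety that replacing $\mathrm{eq}$ by $\mathrm{eq}'$ looks like it might weaken the presentation (since $\mathrm{eq} \Rightarrow \mathrm{eq}'$ is one-directional), but the proof does not actually remove $\mathrm{eq}$ — it keeps $\mathrm{eq}$ and absorbs it into the bag $E'$ of ``side'' equations, while promoting $\mathrm{eq}'$ to be the distinguished $\E_j^{s'}$ equation. This is what makes the argument work regardless of whether the implication can be reversed.
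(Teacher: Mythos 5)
Your proof is correct and follows exactly the paper's own argument: since $\mathrm{eq}\Rightarrow\mathrm{eq}'$, the equation $\mathrm{eq}'$ may be adjoined without changing the monoid, and then $\mathrm{eq}$ is absorbed into the auxiliary set $E'=E\cup\{\mathrm{eq}\}$ so that $\mathrm{eq}'$ becomes the distinguished $\E_j^{s'}$ equation. The subtlety you flag (keeping $\mathrm{eq}$ rather than replacing it) is precisely the point of the paper's one-line proof.
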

\begin{proof}
Let $\mathrm{M}=\monoide{eq}\in \M_i^s$ then $$\mathrm{M}=\monoide{eq, eq'}=\monoide[E\cup \{eq\}]{eq'}\in \M_j^{s'}.$$
\end{proof}

Applying Lemma \ref{lm-outil-MiSubsetMj}, to the lattice of Figure \ref{latticeEqu}, we obtain the  inclusion diagram of Figure \ref{latticeMonoide}.

\begin{figure}[ht]
\centerline{
\begin{tikzpicture}[node distance=2cm, bend angle=15]
\node(q1c)[state]{$\M_{00}^{\circ}$};
\node(q1b)[state,below of = q1c]{$\M_{00}^{\bullet}$};
\node(q2c)[state,left of = q1b]{$\M_{01}^{\circ}$};
\node(q3c)[state,right of = q1b]{$\M_{10}^{\circ}$};
\node(q4c)[state,below of = q1b]{$\M_{11}^{\circ}$};
\node(q2b)[state,left of = q4c]{$\M_{01}^{\bullet}$};
\node(q3b)[state,right of = q4c]{$\M_{10}^{\bullet}$};
\node(q4b)[state,below of = q4c]{$\M_{11}^{\bullet}$};
\path[-]
(q4b)edge[below]node[sloped]{$\supseteq$}(q2b)
(q4b)edge[below]node[sloped]{$\subseteq$}(q4c)
(q4b)edge[below]node[sloped]{$\subseteq$}(q3b)
(q2b)edge[above]node[sloped]{$\subseteq$}(q2c)
(q2b)edge[below]node[pos=.15,sloped]{$\subseteq$}(q1b)
(q4c)edge[above]node[pos=.85,sloped]{$\supseteq$}(q2c)
(q4c)edge[below]node[pos=.15,sloped]{$\subseteq$}(q3c)
(q3b)edge[below]node[sloped]{$\subseteq$}(q3c)
(q3b)edge[above]node[pos=.85,sloped]{$\supseteq$}(q1b)
(q2c)edge[above]node[sloped]{$\subseteq$}(q1c)
(q1b)edge[below]node[sloped]{$\subseteq$}(q1c)
(q3c)edge[above]node[sloped]{$\supseteq$}(q1c);
\end{tikzpicture}}
\caption{Inclusion lattice of the $\M^s_i$ sets}\label{latticeMonoide}
\end{figure}
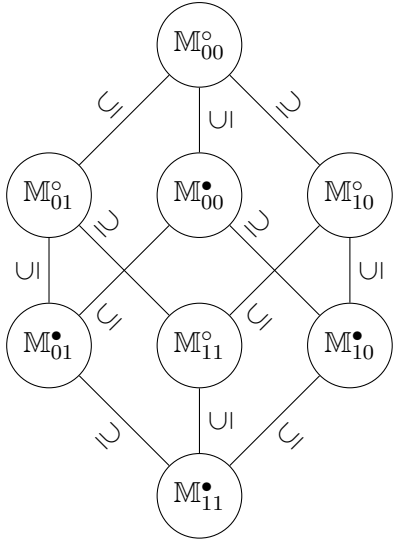

%

%

\begin{lemma}\label{lm-eqimpIneqpair}
We have
\begin{eqnarray}
\M_i^\bullet &\neq & \M_i^{\circ}\text{ for }i\in \{00,01,10,11\}\label{ineq1}\\
\M_{11}^s&\neq& \M_{01}^s\text{ for }s\in\{\circ,\bullet\}\label{ineq2}\\
\M_{11}^s&\neq& \M_{10}^s\text{ for }s\in\{\circ,\bullet\}\label{ineq3}\\
\M_{00}^s&\neq& \M_{01}^s\text{ for }s\in\{\circ,\bullet\}\label{ineq4}\\
\M_{00}^s&\neq& \M_{10}^s\text{ for }s\in\{\circ,\bullet\}\label{ineq5}
\end{eqnarray}
\end{lemma}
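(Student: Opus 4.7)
The plan is, for each of the five lines (\ref{ineq1})--(\ref{ineq5}), to exhibit a \emph{separating monoid}: a concrete strict 2-PIM lying in the larger of the two classes being compared but not in the smaller. In each line, one of the two inclusions is already given for free by Lemma \ref{lm-outil-MiSubsetMj} applied to the implication lattice of Figure \ref{latticeEqu} (and is displayed in Figure \ref{latticeMonoide}); only strictness requires work.

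The natural candidate separator for a class $\M_i^s$ is the generic monoid $M_i^s(k)$ of Table \ref{tab-resume}, defined by the single additional relation $\eq_i^s(k)$. Every element of $M_i^s(k)$ admits a reduced expression $\boxempty^a(\Diamond\boxempty)^j\Diamond^b$ with $a,b\in\{0,1\}$ and $j\in\mathbb{N}$, and the imposed relation performs specific, easy to describe, identifications among these representatives. The key technical step is to establish that \emph{no further identification holds}: I would do so either by producing a confluent rewriting system on reduced words whose only nontrivial rewrite rule is $\eq_i^s(k)$, or equivalently by exhibiting a faithful transformation representation of $M_i^s(k)$ on a finite set of size $O(k)$, typically modelled on a cyclic strand of length about $2k$ together with a sink fixed point that records the truncating effect of $\boxempty$.

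Once these normal forms are in hand the five strictness statements follow by inspecting which ``endpoint type'' (starting letter and ending letter) of the reduced words is preserved by the imposed relation: $\eq_{00}$ preserves both endpoints, $\eq_{01}^s$ preserves only the starting letter, $\eq_{10}^s$ only the ending letter, and $\eq_{11}^s$ preserves neither. For (\ref{ineq1}), membership of $M_i^\circ(k)$ in some $\M_j^\bullet$ would, via Lemmas \ref{lm-impair} and \ref{lm-eqi+eq1=eqi}, force an odd-step identification $\eq_{00}(k',1)$; but the reduced forms in $M_i^\circ(k)$ keep the even and odd powers of $\Diamond\boxempty$ separate. For (\ref{ineq2})--(\ref{ineq3}), the witness $M_{01}^s(k)$ (respectively $M_{10}^s(k)$) still distinguishes reduced words by starting letter (respectively ending letter), which rules out any $\E_{11}^s$-relation. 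For (\ref{ineq4})--(\ref{ineq5}), the witness $M_{00}^s(k)$ distinguishes reduced words by both endpoints, which rules out any $\E_{01}^s$ (respectively $\E_{10}^s$) relation.

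The principal obstacle is the normal-form analysis of the witness monoids. Writing down the candidate canonical representatives is straightforward; proving canonicity takes some care, either through a confluent rewriting argument (standard Knuth--Bendix style completion) or through explicit small transformation models constructed case by case. Once that step is settled, the refutations of the unwanted identities reduce to inspection of the shape of the reduced words.
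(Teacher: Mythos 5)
Your strategy is sound but genuinely different from the paper's. The paper does not touch the presented monoids $M_i^s(k)$ at all here: for each of the five lines it writes down an explicit pair of $2\times 2$ integer matrices $\Diamond,\boxempty$ (with $\Diamond^2=\mathrm{Id}$, $\boxempty^2=\boxempty$), computes the powers $(\Diamond\boxempty)^k$, $(\boxempty\Diamond)^k\boxempty$, etc.\ in closed form, and reads off that the generated matrix monoid satisfies an equation of the larger class while satisfying \emph{no} equation of the smaller one. The separating monoid is the matrix monoid itself, so no completeness or confluence statement about any presentation is ever needed --- only finite verifications of identities and non-identities of concrete matrices. Your route instead takes the generic monoids of Table \ref{tab-resume} as witnesses, which forces you to prove that the listed relations generate \emph{all} identifications, i.e.\ the normal-form theorem you defer to a Knuth--Bendix argument or a faithful representation. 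That step is real work and is exactly what the paper's matrix trick is designed to avoid (note that a separating homomorphic image of $M_i^\circ(k)$ in which the unwanted relation fails is already a witness on its own, which is why the paper never needs faithfulness). Two further remarks. First, for lines (\ref{ineq2})--(\ref{ineq5}) your endpoint-letter invariants can in fact be verified directly on the generating relation and its closure under left/right context, without the full normal-form analysis --- this is precisely the content of Lemma \ref{lemmotsequiv}, which the paper proves later for the classification, so that part of your argument closes cleanly. Second, for line (\ref{ineq1}) the situation is different: the defining relation of $M_{01}^\circ(k)$, for instance, does not preserve the parity of $|w|_\Diamond-|w|_\boxempty$, so the claim that ``even and odd powers of $\Diamond\boxempty$ stay separate'' really does require either the completed normal-form/counting argument (e.g.\ comparing the orders $4k+2$ versus $4k$ from Section \ref{sect-size}) or a separating representation --- which is the one place where your proposal, as written, still has a promissory note where the paper has a two-line matrix computation.
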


\begin{proof}
%
%

To show (\ref{ineq1}), we need only exhibit a monoid of $\M_i^{\circ}\backslash\M_i^\bullet$ for $i\in \{00,01,10,11\}$. To do this, consider the set of matrices $2\times2$ with integer coefficients, equipped with the matrix product. 
\begin{enumerate}
\item[case $i=00$]
If we set
$$\Diamond=\begin{pmatrix}
-1 & y \\
0 & 1
\end{pmatrix}
\mathrm{\ and\ }
\boxempty=\begin{pmatrix}
1 & 0 \\
0 & 0
\end{pmatrix}$$
We can check that $(\Diamond\boxempty)^k=\begin{pmatrix}(-1)^k&0 \\0&0\end{pmatrix}$. We therefore have a value for $k$, such that $(\Diamond\boxempty)^{k}=(\Diamond\boxempty)^{k+2\ell}$ but $(\Diamond\boxempty)^{k}\neq(\Diamond\boxempty)^{k+2\ell+1}$. The monoid under consideration therefore belongs to $\M_{00}^{\circ}$ and does not belong to $\M_{00}^\bullet$.

\item[case $i=01$]
If we set
$$\Diamond=\begin{pmatrix}
1 & y \\
0 & -1
\end{pmatrix}
\mathrm{\ and\ }
\boxempty=\begin{pmatrix}
0 & 0 \\
0 & 1
\end{pmatrix}$$
We can check that $\Diamond\boxempty\Diamond\boxempty\Diamond=\Diamond\boxempty\neq\Diamond\boxempty\Diamond$. We therefore have a value for $k$ (in this case $1$), such that $(\Diamond\boxempty)^{k}=(\Diamond\boxempty)^{k+1}\Diamond$ but $(\Diamond\boxempty)^{k}\neq(\Diamond\boxempty)^{k}\Diamond$. 

It is easy to check that
$$\forall\ell>0, (\Diamond\boxempty)^{2\ell}=\begin{pmatrix}0 & -y \\0 & 1\end{pmatrix}\neq(\Diamond\boxempty)^{2\ell}\Diamond=\begin{pmatrix}0 & y \\0 & -1\end{pmatrix}$$
and
$$\forall\ell\geq0, (\Diamond\boxempty)^{2\ell+1}=\begin{pmatrix}0 & y \\0 & -1\end{pmatrix}\neq(\Diamond\boxempty)^{2\ell+1}\Diamond=\begin{pmatrix}0 & -y \\0 & 1\end{pmatrix}.$$
The monoid under consideration therefore belongs to $\M_{01}^{\circ}$ and does not belong to $\M_{01}^\bullet$.
\item[case $i=10$]
\[
\Diamond=\begin{pmatrix}-1&y\\0&1\end{pmatrix} \mbox{ and }\boxempty=\begin{pmatrix}1&0\\0&0\end{pmatrix}.
\]

We have for any $k>0$:
\[
(\Diamond\boxempty)^k=(\boxempty\Diamond)^k\boxempty=\begin{pmatrix}(-1)^k&0 \\0&0\end{pmatrix}
\]
which also proves that for any $k>1$, $(\Diamond\boxempty)^k\neq (\boxempty\Diamond)^{k-1}\boxempty$ and thus $\M_{10}^\bullet\neq\M_{10}^{\circ}$.

\item[case $i=11$]
\[
\Diamond=\begin{pmatrix}-1&0\\0&1\end{pmatrix} \mbox{ and }\boxempty=\begin{pmatrix}1&0\\0&0\end{pmatrix}.
\]

We have for any $k>0$:
\[
(\Diamond\boxempty)^k=(\boxempty\Diamond)^k=\begin{pmatrix}(-1)^k&0 \\0&0\end{pmatrix}
\]
which also proves that for any $k>1$, $(\Diamond\boxempty)^k\neq (\boxempty\Diamond)^{k-1}\boxempty$ and thus $\M_{11}^{\bullet}\neq\M_{11}^{\circ}$.

\end{enumerate}
To show (\ref{ineq5}), we set
$$\Diamond=\begin{pmatrix}
1 & 0 \\
y & -1
\end{pmatrix}
\mathrm{\ and\ }
\boxempty=\begin{pmatrix}
1 & 0 \\
0 & 0
\end{pmatrix}.$$
For all $k>0$, we have $(\Diamond\boxempty)^k=(\Diamond\boxempty)^{k+1}=(\Diamond\boxempty)^{k+2}=\begin{pmatrix}
1 & 0 \\
y & 0
\end{pmatrix}$ while $(\boxempty\Diamond)^{k}\boxempty=(\boxempty\Diamond)^{k-1}\boxempty= \begin{pmatrix}
1 & 0 \\
0 & 0
\end{pmatrix}$.\\

With these  matrices, we also have for all $k>0$, $(\Diamond\boxempty)^k=(\Diamond\boxempty)^k\Diamond=(\Diamond\boxempty)^{k+1}\Diamond$ while $(\boxempty\Diamond)^k=(\boxempty\Diamond)^{k+1}\neq(\Diamond\boxempty)^k$ which proves (\ref{ineq2}).

To show (\ref{ineq4}), we set
$$\Diamond=\begin{pmatrix}
1 & 0 \\
y & 1
\end{pmatrix}
\mathrm{\ and\ }
\boxempty=\begin{pmatrix}
0 & 0 \\
0 & 1
\end{pmatrix}.$$
For all $k>0$, we have $(\Diamond\boxempty)^k=(\Diamond\boxempty)^{k+1}=(\Diamond\boxempty)^{k+2}=\begin{pmatrix}
0 & 0 \\
0 & 1
\end{pmatrix}$ while $(\Diamond\boxempty)^{k}\Diamond=(\Diamond\boxempty)^{k+1}\Diamond= \begin{pmatrix}
0 & 0 \\
y & 1
\end{pmatrix}$.\\

To show (\ref{ineq3}), we set
$$\Diamond=\begin{pmatrix}
-1 & 0 \\
y & 1
\end{pmatrix}
\mathrm{\ and\ }
\boxempty=\begin{pmatrix}
0 & 0 \\
0 & 1
\end{pmatrix}.$$
For all $k>0$, we have $(\Diamond\boxempty)^k=(\boxempty\Diamond)^{k-1}\boxempty=(\boxempty\Diamond)^{k}\boxempty=\begin{pmatrix}
0 & 0 \\
0 & 1
\end{pmatrix}$ while $(\boxempty\Diamond)^{k}= (\boxempty\Diamond)^{k+1}= \begin{pmatrix}
0 & 0 \\
y & 1
\end{pmatrix}$.

\end{proof}

\subsection{Intersections of the sets $\M_i$}
We now show what the intersections of the different sets of monoids correspond to.

\begin{lemma}\label{lm-M4=M3capM2}
$$\M_{11}=\M_{10}\cap \M_{01}.$$
\end{lemma}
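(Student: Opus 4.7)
The plan is to establish the two inclusions $\M_{11} \subseteq \M_{10}\cap\M_{01}$ and $\M_{10}\cap\M_{01} \subseteq \M_{11}$ separately, each being essentially a corollary of earlier work.

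For the forward inclusion, the idea is to read it off directly from the lattice of Figure \ref{latticeMonoide}. That diagram already records $\M_{11}^\circ \subseteq \M_{01}^\circ$, $\M_{11}^\circ \subseteq \M_{10}^\circ$, $\M_{11}^\bullet \subseteq \M_{01}^\bullet$, and $\M_{11}^\bullet \subseteq \M_{10}^\bullet$ (all of them instances of Lemma \ref{lm-outil-MiSubsetMj} applied to the lattice of equations in Figure \ref{latticeEqu}). Taking unions over $s\in\{\circ,\bullet\}$ yields $\M_{11}\subseteq\M_{01}$ and $\M_{11}\subseteq\M_{10}$, hence $\M_{11}\subseteq\M_{10}\cap\M_{01}$.

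For the reverse inclusion, let $M\in\M_{10}\cap\M_{01}$. By the definition of these sets, $M$ admits a presentation of the form $\mon{eq_1}$ with $\eq_1\in\E_{01}$ and another presentation with an equation $\eq_2\in\E_{10}$. Since both $\eq_1$ and $\eq_2$ hold in $M$, we may merge the two presentations: adding to a presentation an equation that is already satisfied in the monoid does not change the quotient. Hence $M$ admits a presentation containing simultaneously both $\eq_1\in\E_{01}^{s}$ and $\eq_2\in\E_{10}^{s'}$ for some $s,s'\in\{\circ,\bullet\}$.

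It remains to reduce the conjunction $\eq_1\wedge\eq_2$ to a single $\E_{11}$ equation, which is exactly what the heterogeneous-system lemmas do. Case by case: if $s=s'=\circ$, Lemma \ref{1et2<=>2} (equation (\ref{eq2}) with $i=01$, $j=10$, so $i|j=11$) gives $\eq_1\wedge\eq_2\Leftrightarrow \eq_{11}^\circ(\min(k_0,k_1))$; if $s=s'=\bullet$, Lemma \ref{1et2bullet<=>2} (equation (\ref{eq2b})) gives $\eq_1\wedge\eq_2\Leftrightarrow \eq_{11}^\bullet(\min(k_0,k_1))$; and in the mixed case, Lemma \ref{melange} (equation (\ref{eq3m})) gives $\eq_1\wedge\eq_2\Leftrightarrow \eq_{11}^\bullet(\min(k_0,k_1))$. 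In every case, the conjunction is equivalent to an equation in $\E_{11}$, so $M\in\M_{11}$, concluding the reverse inclusion. There is no real obstacle here: the lemmas of the previous subsection were designed precisely to handle the combinations $\E_{01}\wedge\E_{10}$, and the only point to double-check is that ``belonging to $\M_{10}$ and to $\M_{01}$'' can be witnessed by a single presentation carrying both equations, which is immediate since $\Eq$-equations either hold in $M$ or they don't.
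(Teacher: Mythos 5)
Your proof is correct, and its skeleton (double inclusion; forward direction from Lemma \ref{lm-outil-MiSubsetMj}; reverse direction by merging the two presentations and collapsing the conjunction of an $\E_{01}$ equation with an $\E_{10}$ equation into a single $\E_{11}$ equation) is the same as the paper's. The execution of the reverse inclusion differs, though, in a way worth noting. The paper redoes a direct hand computation: it takes the two specific equations $(\Diamond\boxempty)^{k_0}=\boxempty(\Diamond\boxempty)^{k_0}$ and $(\Diamond\boxempty)^{k_1}=(\Diamond\boxempty)^{k_1+1}\Diamond$ --- that is, only the $\circ$ representatives $\eq_{10}^\circ(k_0)$ and $\eq_{01}^\circ(k_1)$ --- and manipulates them case by case on $k_0\lessgtr k_1$ to produce $(\Diamond\boxempty)^{m}=(\boxempty\Diamond)^{m}$, leaving the $\bullet$ variants implicit. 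You instead invoke the already-proved reduction lemmas (\ref{1et2<=>2}, \ref{1et2bullet<=>2}, \ref{melange}), which buys you two things: no computation is repeated (the $\circ$--$\circ$ case is literally Lemma \ref{2et3<=>4}), and all four combinations of markers $s,s'\in\{\circ,\bullet\}$ are covered explicitly, so your version is actually the more complete of the two. You are also right to flag, and correctly dispose of, the one genuine subtlety: that membership in $\M_{10}$ and in $\M_{01}$ is witnessed by two a priori different presentations, which can be merged because an equation valid in a monoid can be added to any of its presentations without changing the quotient.
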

\begin{proof}
Let us show this result by double inclusion.
The result of Lemma \ref{lm-outil-MiSubsetMj} gives us one direction of inclusion which is $\M_{11}\subset \M_{01}\cap \M_{10}$. For the other direction, let's assume we have the following equations:
\begin{equation}\label{eqM3}
(\Diamond\boxempty)^{k_0}=\boxempty(\Diamond\boxempty)^{k_0} \hfill \in \E_{10}
\end{equation}
and
\begin{equation}\label{eqM2}
(\Diamond\boxempty)^{k_1}=(\Diamond\boxempty)^{k_1+1}\Diamond \hfill \in \E_{01}
\end{equation}
If $k_0\leq k_1$, Equation (\ref{eqM2}) can be rewritten as follows:
$$\begin{array}{lll}
(\Diamond\boxempty)^{k_1}&=&\Diamond\boxempty(\Diamond\boxempty)^{k_0}(\Diamond\boxempty)^{k_1-k_0}\Diamond\\
&=&\Diamond(\Diamond\boxempty)^{k_0}(\Diamond\boxempty)^{k_1-k_0}\Diamond\\
&=&(\boxempty\Diamond)^{k_1}
\end{array}$$
This gives the equation $(\Diamond\boxempty)^{k_1}=(\boxempty\Diamond)^{k_1}$. \\

Suppose now that $k_1<k_0$. By left and right multiplying by $\Diamond$, Equation (\ref{eqM2}) becomes
$$(\boxempty\Diamond)^{k_1}=(\boxempty\Diamond)^{k_1}\boxempty$$
We can now rewrite Equation (\ref{eqM3}).
$$\begin{array}{lll}
(\Diamond\boxempty)^{k_0}&=&(\boxempty\Diamond)^{k_0-k_1}(\boxempty\Diamond)^{k_1}\boxempty\\
&=&(\boxempty\Diamond)^{k_0-k_1}(\boxempty\Diamond)^{k_1}\\
&=&(\boxempty\Diamond)^{k_0}
\end{array}$$
This gives the equation $(\Diamond\boxempty)^{k_1}=(\boxempty\Diamond)^{k_1}$.  In both cases, we obtain an equation of $\E_{11}$. We use Lemma \ref{lm-outil-MiSubsetMj}  to conclude the proof.
\end{proof}

\begin{lemma}
For every $i\in\{01,10,11\}$, we have $\M_i^\bullet=\M_i\cap \M_{00}^\bullet$.
\end{lemma}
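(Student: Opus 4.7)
The plan is to prove $\M_i^\bullet = \M_i \cap \M_{00}^\bullet$ by double inclusion, leveraging the reduction machinery already developed in Section~4. The forward inclusion is essentially formal. Since $\E_i^\bullet \subset \E_i$, I immediately get $\M_i^\bullet \subseteq \M_i$. For $\M_i^\bullet \subseteq \M_{00}^\bullet$, given $M = \langle \boxempty,\Diamond \mid \{\boxempty^2=\boxempty,\Diamond^2=\varepsilon,\eq_i^\bullet(k)\} \cup E\rangle$, Lemma~\ref{lm-impair} yields $\eq_i^\bullet(k)\Rightarrow\eq_{00}(k,1)$, and since $\eq_{00}(k,1)\in\E_{00}^\bullet$, adding this redundant equation to the presentation exhibits $M\in\M_{00}^\bullet$.

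For the reverse inclusion, I take $M\in\M_i\cap\M_{00}^\bullet$ and aim to produce a presentation of $M$ containing an equation of $\E_i^\bullet$. By definition, $M$ admits one presentation containing some $\eq\in\E_i$ and a (possibly different) presentation containing some $\eq' = \eq_{00}(k_0,\ell)\in\E_{00}^\bullet$ with $\ell$ odd. Since both presentations define the same quotient of $\{\boxempty,\Diamond\}^*$, both equations hold in $M$; hence I may merge their relations into a single presentation of $M$ that contains both $\eq$ and $\eq'$ simultaneously.

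If $\eq\in\E_i^\bullet$ there is nothing left to do, so I focus on the case $\eq = \eq_i^\circ(k_1)$. When $\ell = 2r+1$ with $r>0$, equation~(\ref{eq2m}) of Lemma~\ref{melange} immediately reduces the conjunction $\eq_{00}(k_0,2r+1)\wedge\eq_i^\circ(k_1)$ to $\eq_i^\bullet(\min(k_0,k_1))$, placing $M$ into $\M_i^\bullet$. For the residual case $\ell=1$, I would first extract $\eq_{00}(k_1,2)$ from $\eq_i^\circ(k_1)$ by Lemma~\ref{lm-pair}, combine it with $\eq_{00}(k_0,1)$ via Lemma~\ref{lem:eq1+eq1=eq1} (noting $\gcd(2,1)=1$) to obtain $\eq_{00}(\min(k_0,k_1),1)$, and at the same time propagate $\eq_i^\circ(k_1)$ down to $\eq_i^\circ(\min(k_0,k_1))$ using equation~(\ref{eq1}) of Lemma~\ref{1et2<=>2}. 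A final application of Lemma~\ref{lm-eqi+eq1=eqi} pairs the $\eq_i^\circ$ and the $\eq_{00}(\cdot,1)$ at the common index to produce $\eq_i^\bullet(\min(k_0,k_1))$.

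The only subtle point is the realignment of parameters: Lemma~\ref{lm-eqi+eq1=eqi} requires the $\eq_i^\circ$ and the $\eq_{00}(\cdot,1)$ to share the same index, which is precisely why the minimum reductions of Section~4 must be invoked before the final merge. Beyond this bookkeeping the argument is mechanical, and no genuinely new computational obstacle is expected, since every required implication has been proved in the preceding lemmas.
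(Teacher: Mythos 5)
Your proof is correct and follows essentially the same route as the paper: the forward inclusion via Lemma \ref{lm-impair} together with the presentation-augmentation argument of Lemma \ref{lm-outil-MiSubsetMj}, and the reverse inclusion by merging the two presentations and reducing the conjunction $\eq_i^\circ(k_1)\wedge\eq_{00}(k_0,\ell)$ ($\ell$ odd) to an equation of $\E_i^\bullet$. If anything, you are more careful than the paper, which simply cites Lemma \ref{lm-eqi+eq1=eqi} for this last step without addressing the parameter realignment that you handle explicitly (including the case $\ell=1$, which falls outside the stated range $r>0$ of Lemma \ref{melange}).
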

\begin{proof}
By Lemma \ref{lm-eqimpIneqpair}, we have $\M_i^\bullet \subset \M_i$. By Lemmas \ref{lm-impair} and \ref{lm-outil-MiSubsetMj} we can conclude that $\M_i^\bullet \subset \M_{00}^\bullet$. For the inverse inclusion, the case $\mathrm{M}\in \M_i^\bullet \cap \M_{00}^\bullet$ is trivial. Let us consider the case where $\mathrm{M}\in \M_i^\circ\cap \M_{00}^\bullet$. We then have that $\mathrm{M}$ is presented by an equation of $\E_i^\circ$ and an equation of $\E_{00}^\bullet$. We then conclude by Lemmas \ref{lm-eqi+eq1=eqi} and \ref{lm-outil-MiSubsetMj}.
\end{proof}
 
\bigskip
\begin{figure}[h]
 \centerline{
\begin{tikzpicture}
\draw (0,0) node[rectangle,rounded corners,minimum height=5.5cm,minimum width=8cm,draw] {};
\draw (-.5,.5) node[rectangle,rounded corners,minimum height=3cm,minimum width=5cm,fill=red!5] {};
\draw (.5,-.5) node[rectangle,rounded corners,minimum height=3cm,minimum width=5cm,fill=blue!5, opacity=0.5] {};
\draw (-.5,.5) node[rectangle,rounded corners,minimum height=3cm,minimum width=5cm,draw=red] {};
\draw (.5,-.5) node[rectangle,rounded corners,minimum height=3cm,minimum width=5cm,draw=blue] {};
\draw (0,-.5) node[minimum height=3cm,minimum width=3cm,draw, pattern=north west lines, pattern color=blue,draw=blue] {};
\draw (0,0.5) node[minimum height=3cm,minimum width=3cm, pattern=north east lines, pattern color=red] {};
\draw (0,.5) node[minimum height=3cm,minimum width=3cm,draw,draw=red,opacity=0.8] {};
\draw (0,0) node[rectangle,rounded corners,minimum height=5cm,minimum width=3cm,draw=black] {};
\draw (5,2.2) node  {$\M_{00}^\bullet$};
\draw (4.7,2.2) -- (0.6,2.2);
\draw (5,1) node  {$\M_{00}$};
\draw (4.7,1) -- (3.6,1);
\draw[color=red] (-5,-0.8) node  {$\M_{01}$};
\draw[color=red] (-4.7,-0.8) -- (-2.8,-0.8);
\draw[color=green] (-5,0) node  {$\M_{11}$};
\draw[color=green] (-4.7,0) -- (-1.8,0);
\draw[color=green] (-5,0.8) node  {$\M_{11}^\bullet$};
\draw[color=green] (-4.7,0.8) -- (-1.1,0.8);
\draw[color=red] (-5,1.7) node  {$\M_{01}^\bullet$};
\draw[color=red] (-4.7,1.7) -- (-1,1.7);
\draw[color=blue] (5,0) node  {$\M_{10}$};
\draw[color=blue] (4.7,0) -- (2.8,0);
\draw[color=blue] (5,-1.25) node  {$\M_{10}^{\bullet}$};
\draw[color=blue] (4.6,-1.25) -- (1,-1.25);
\end{tikzpicture}}
   \caption{Summary of the situation}
    \label{Fig-Summary}
\end{figure}
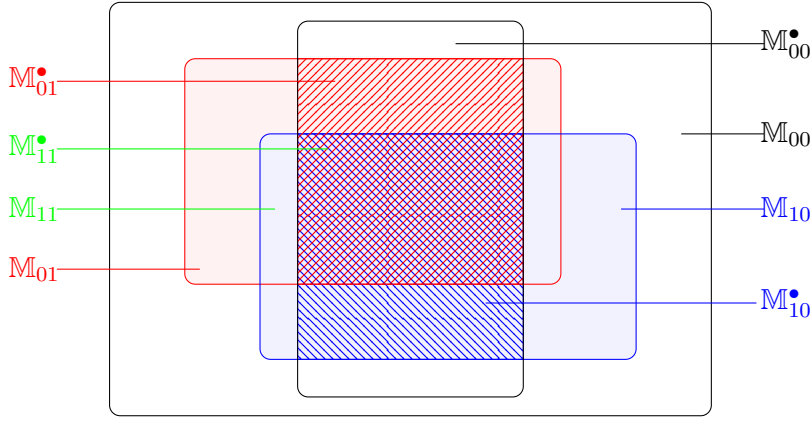

\section{Size of strict 2-PIMs}\label{sect-size}

To prove that our result gives a classification, we first need to count the elements of each monoid. Let us start with the case of  monoids characterized by an equation $\eq_{00}(k,k+r)$ ($(\Diamond\Box)^k=(\Diamond\Box)^{k+r}$).
There are $4$ possibilities for the minimal writing of an element:

\begin{itemize}
\item $(\Diamond\Box)^i$ for $i\in [0, k+r-1]$.
\item $(\Diamond\Box)^i\Diamond $ for $i\in[0, k+r-1]$.
\item $(\Box\Diamond)^i$ for $i\in[1, k+r-1]$. Be careful not to count twice the empty word ($i=0$) already counted in case 1.
\item $(\Box\Diamond)^i\Box$ for $i\in [0, k+r-2]$. Indeed, 
$(\Box\Diamond)^{k+r-1}\Box=\Diamond\Diamond(\Box\Diamond)^{k+r-1}\Box=\Diamond(\Diamond\Box)^{k+r}=\Diamond(\Diamond\Box)^{k}=(\Box\Diamond)^{k-1}\Box$.
\end{itemize}
This implies that the cardinality of the monoid is  $2(k+r)+2(k+r-1)=2k+2r-2$ elements.\\
For instance, we recover the case of the Kuratowski monoid, which is given by the equation $(\Diamond\Box)^4=(\Diamond\Box)^2$ ($eq_{00}(2,2)$). We find its cardinal $4*4-2=14$.\\
We can refine this enumeration, looking at the length of the smallest word $lw(e)$ representing element $e$ and counting the number of elements $e$ for a fixed $lw(e)$.
This information is summarized in the Hilbert series
\[
\mathcal H_{M}(t)=\sum_{e\in M}t^{lw(e)}.
\]
For example, for $M_{00}(k,r)$ the Hilbert series is easily deduced from the enumeration above. We obtain
\[
\mathcal H_{M_{00}(k,r)}(t)=1+\left(\sum_{i=1}^{2(k+r-1)}2t^i\right)+ t^{2(k+r-1)+1}.
\] 
For the Kuratowski monoid, we obtain
\[
\mathcal H_{M_{00}(2,2)}(t)=1+2(t+t^2+t^3+t^4+t^5+t^6)+ t^{7}.
\] 
This corresponds to the following graduation on the elements
\[
\begin{array}{|c|c|c|c|c|c|c|c|}
\hline
0&1&2&3&4&5&6&7\\\hline
\varepsilon&\Diamond&\Diamond\Box&\Diamond\Box\Diamond&(\Diamond\Box)^2&(\Diamond\Box)^2\Diamond&(\Diamond\Box)^3&(\Diamond\Box)^3\Diamond\\
&\Box&\Box\Diamond&\Box\Diamond\Box&(\Box\Diamond)^2&(\Box\Diamond)^2\Box&(\Box\Diamond)^3&\\\hline
\end{array}
\]
It is possible to systematize the calculation. First, let us consider the simplest case of the monoid $M_\emptyset$.
When a relation holds, it is the highest degree member that contains the element to be removed from the series. Applying  inclusion-exclusion principle, we obtain the series
\begin{equation}\label{H_0}
\mathcal H_{M_\emptyset}(t)=\frac1{1-2t+2t^2-2t^3+\cdots}=\frac1{1-\frac{2t}{1+t}}=\frac{1+t}{1-t}=1+2t+2t^2+\cdots.
\end{equation}
When a relation between the generators is added, we need to consider the equivalent one that has the smallest degree (the highest degree of the two words in the equation) and remove the Hilbert series
 of the ideal generated by the larger of the two words. For instance in $M_{00}(k,r)$, the equation $(\Diamond\Box)^k=(\Diamond\Box)^{k+r}$ is not the smallest one, because  by left multiplying by $\Diamond$, we obtain $\Box(\Diamond\Box)^{k-1}=\Box(\Diamond\Box)^{k+r-1}$. So we remove from $\mathcal H_{M_\emptyset}(t)$ the Hilbert series of the ideal generated by $\Box(\Diamond\Box)^{k+r-1}$. That is
\[\begin{array}{rcl}
\mathcal H_{M_{00}(k,r)}(t)&=&\mathcal H_{M_\emptyset}(t)-t^{2(k+r)-1}\mathcal H_{M_\emptyset}(t)=\frac{(1-t^{2(k+r)-1})(1+t)}{1-t}\\
&=&(1+t+\cdots +t^{2(k+r)-2})(1+t)
=1+2(t+\cdots+t^{2(k+r-1)})+t^{2(k+r-1)+1},
\end{array}
\]
as expected.\\
Now we examine the remaining cases by applying the same method
\begin{itemize}
\item For $M^\circ_{01}(k)$, we consider the relation $(\Box\Diamond)^{k}=(\Box\Diamond)^{k}\Box$ obtained by
conjugating $(\Diamond\Box)^k=(\Diamond\Box)^{k+1}\Diamond$ by $\Diamond$ and obtain
\[
\mathcal H_{M^\circ_{01}(k)}(t)=\frac{(1-t^{2k+1})(1+t)}{1-t}=1+2(t+t^2+\cdots+t^{2k})+t^{2k+1}.
\]
This yields $\#M^\circ_{01}(k)=\mathcal H_{M^\circ_{01}(k)}(1)=4k+2$.
\item For $M^\bullet_{01}(k)$, we consider the relation $(\Box\Diamond)^{k}=(\Box\Diamond)^{k-1}\Box$ obtained by
conjugating $(\Diamond\Box)^k=(\Diamond\Box)^{k}\Diamond$ by $\Diamond$ and obtain
\[
\mathcal H_{M^\bullet_{01}(k)}(t)=\frac{(1-t^{2k})(1+t)}{1-t}=1+2(t+t^2+\cdots+t^{2k-1})+t^{2k}.
\]
This yields $\#M^\bullet_{01}(k)=\mathcal H_{M^\bullet_{01}(k)}(1)=4k$.
\item For $M^\circ_{10}(k)$, we consider the relation $(\Box\Diamond)^{k}=(\Box\Diamond)^{k}\Box$ and obtain
\[
\mathcal H_{M^\circ_{10}(k)}(t)=\frac{(1-t^{2k+1})(1+t)}{1-t}=1+2(t+t^2+\cdots+t^{2k})+t^{2k+1}.
\]
This yields $\#M^\circ_{10}(k)=\mathcal H_{M^\circ_{10}(k)}(1)=4k+2$.
\item For $M^\bullet_{10}(k)$, we consider the relation $(\Box\Diamond)^{k}=(\Box\Diamond)^{k-1}\Box$ and obtain
\[
\mathcal H_{M^\bullet_{10}(k)}(t)=\frac{(1-t^{2k})(1+t)}{1-t}=1+2(t+t^2+\cdots+t^{2k-1})+t^{2k}.
\]
This yields $\#M^\bullet_{10}(k)=\mathcal H_{M^\circ_{10}(k)}(1)=4k$.
 \item For $M^\circ_{11}(k)$, we consider the relation $(\Box\Diamond)^{k}=(\Diamond\Box)^{k}$ and obtain
\[
\mathcal H_{M^\circ_{11}(k)}(t)=\frac{(1-t^{2k})(1+t)}{1-t}=1+2(t+t^2+\cdots+t^{2k-1})+t^{2k}.
\]
This yields $\#M^\circ_{11}(k)=\mathcal H_{M^\circ_{11}(k)}(1)=4k$.
 \item For $M^\bullet_{11}(k)$, we consider the relation $(\Box\Diamond)^{k}\Box=(\Diamond\Box)^{k}\Diamond$ obtained by right multiplying
 $(\Box\Diamond)^{k+1}=(\Diamond\Box)^{k}$ by $\Diamond$
 and obtain
\[
\mathcal H_{M^\bullet_{11}(k)}(t)=\frac{(1-t^{2k+1})(1+t)}{1-t}=1+2(t+t^2+\cdots+t^{2k})+t^{2k+1}.
\]
This yields $\#M^\bullet_{11}(k)=\mathcal H_{M^\bullet_{11}(k)}(1)=4k+2$.
\end{itemize}

\begin{table}[h]
$$\begin{array}{l|l}
\text{Monoid}&\text{Order}\\
\hline
M_\emptyset&\infty\\
\hline
M_{00}(k,r)&2k+2r-2\\
\hline
M_{01}^\circ(k)&4k+2\\
\hline
M_{01}^\bullet(k)&4k\\
\hline
M_{10}^\circ(k)&4k+2\\
\hline
M_{10}^\bullet(k)&4k\\
\hline
M_{11}^\circ(k)&4k\\
\hline
M_{11}^\bullet(k)&4k+2\\
\end{array}$$
\caption{Order of strict $2$-PIMs}\label{tab-order}
\end{table}

\section{Classification of strict 2-PIMs}
This section is dedicated to the classification of strict 2-PIMs. We will check that two monoids  are defined by two  distinct equations of $\E$ if and only if they are non-isomorphic. 

  First of all, the results of the following lemma are a direct consequence of the enumeration calculations in the previous section. For two monoids $\mathrm{M}$ and $\mathrm{N}$, we write $\mathrm{M}\sim \mathrm{N}$ if they are isomorphic.
\begin{proposition}\label{pr-nonisomorphie-enumeration}
\begin{enumerate}
  \item\label{point1} For every $i\in\{01,10,11\}$, $s\in\{\bullet,\circ\}$  and every couple $k,k'$ of positive integers,  we have
    \[
  \mathrm{M}_i^s(k)\sim
  \mathrm{M}_i^s(k')\Leftrightarrow k=k'.
  \]
  \item\label{point2}  For every $i\in\{01,10,11\}$ and every couple $k,k'$ of positive integers,  we have 
  \[\mathrm{M}_i^\bullet(k)\not\sim
  \mathrm{M}_i^\circ(k').
  \]
  \item\label{point3}  For every $i,j\in\{01,10\}$ and every couple $k,k'$ of positive integers,  we have
  \[ 
  \mathrm{M}_i^\bullet(k)\not\sim
  \mathrm{M}_j^\circ(k').
  \]
  \item\label{point4}  For every $i\in\{01,10\}$, $s\in\{\bullet,\circ\}$  and every couple $k,k'$ of positive integers,  we have
  \[  
  \mathrm{M}_i^s(k)\not\sim
  \mathrm{M}_{11}^s(k').
  \]
  \end{enumerate}
\end{proposition}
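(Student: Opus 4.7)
The plan is to derive all four claims directly from Table~\ref{tab-order} of Section~\ref{sect-size}, using only the fact that isomorphic monoids share the same cardinality.

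For Point~\ref{point1}, Table~\ref{tab-order} gives $\#\mathrm{M}_i^s(k)\in\{4k,4k+2\}$ for each fixed pair $(i,s)$ with $i\in\{01,10,11\}$ and $s\in\{\circ,\bullet\}$, so the order is a strictly increasing function of $k$. Hence two monoids of the same family $(i,s)$ have the same order if and only if $k=k'$; since equality of orders is necessary for isomorphism, the equivalence in Point~\ref{point1} follows.

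For Points~\ref{point2}, \ref{point3} and \ref{point4}, I will rely on the single observation that every finite order in Table~\ref{tab-order} is congruent to $0$ or $2$ modulo~$4$; explicitly, $\#\mathrm{M}_{01}^\circ(k)=\#\mathrm{M}_{10}^\circ(k)=\#\mathrm{M}_{11}^\bullet(k)=4k+2\equiv 2\pmod{4}$, while $\#\mathrm{M}_{01}^\bullet(k)=\#\mathrm{M}_{10}^\bullet(k)=\#\mathrm{M}_{11}^\circ(k)=4k\equiv 0\pmod{4}$. I then verify case by case that in each of the three points the two monoids being compared lie in opposite residue classes and therefore cannot have equal orders: in Point~\ref{point2}, for every $i\in\{01,10,11\}$ the superscripts $\bullet$ and $\circ$ split the two $\mathrm{M}_i^s(k)$ across the two groups; in Point~\ref{point3}, the $\bullet$ side always lies in the $\equiv 0$ group and the $\circ$ side always in the $\equiv 2$ group, for any $i,j\in\{01,10\}$; in Point~\ref{point4}, for each fixed $s$ the members $\mathrm{M}_i^s$ with $i\in\{01,10\}$ and $\mathrm{M}_{11}^s$ sit in opposite groups. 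In each case the orders differ, so the monoids are non-isomorphic.

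There is no real obstacle here: once Table~\ref{tab-order} is in hand, the proposition reduces to reading off these residues. The only mildly subtle point is the inversion of the $\circ/\bullet$ convention at $i=11$ compared to $i\in\{01,10\}$, so one cannot sort monoids into the two residue classes by the superscript alone. Note that the cases not addressed by this proposition, typically $\mathrm{M}_{01}^s(k)$ versus $\mathrm{M}_{10}^s(k')$ where the orders can actually coincide, would require finer invariants and must be handled separately.
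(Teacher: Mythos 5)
Your proposal is correct and follows essentially the same route as the paper: the authors likewise derive all four points directly from the order counts of Section \ref{sect-size} (Table \ref{tab-order}), noting that equality of cardinalities forces $k=k'$ within a family and that the $4k$ versus $4k+2$ dichotomy separates the pairs in points \ref{point2}--\ref{point4}. Your explicit remark about the $\circ/\bullet$ convention being inverted at $i=11$ is a helpful clarification but does not change the argument.
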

\begin{proof}
Item (\ref{point1})  comes from the fact that for the two monoids of $\mathrm{M}_i^s(k)$ and $\mathrm{M}_i^s(k')$ to have the same number of elements, it is necessary and sufficient that $k=k'$. Item (\ref{point2}) arises from the fact that the two monoids $\mathrm{M}_i^\bullet(k)$ and $\mathrm{M}_i^\circ(k')$ can never have the same number of elements. The same argument can be used to prove item (\ref{point3}) and (\ref{point4}).
\end{proof}
In the following, we will show that a strict 2-PIM contains a unique involution.
\begin{lemma}\label{lm-unicite-involution}
Let $\mathrm{M}=\langle \Diamond,\boxempty\mid \{\Diamond^2=\Id, \boxempty^2=\boxempty,\eq\}\rangle$ be a strict 2-PIM. Then $\Diamond$  is the only involution (apart from $\mathrm{Id}$) of $\mathrm{M}$.
\end{lemma}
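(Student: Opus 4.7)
The plan is to take an arbitrary involution $x\in \mathrm{M}$, pick a canonical representative, and directly compute $x^{2}$ in order to read off which degenerate relation $x^{2}=\Id$ forces.

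Recall from the generic description at the start of Section~3 that every element of $\mathrm{M}$ admits a representative of the form
$$w = \boxempty^{d}(\Diamond\boxempty)^{k}\Diamond^{f},\qquad d,f\in\{0,1\},\ k\in\mathbb{N}.$$
I will fix such a $w$ for the assumed involution $x$ and compute $w^{2}$ using only the defining rules $\boxempty^{2}=\boxempty$ and $\Diamond^{2}=\Id$; the equation $w^{2}=\Id$ then holds in $\mathrm{M}$. The four cases split by $(d,f)$:
\begin{itemize}
\item $(0,0)$: here $w=(\Diamond\boxempty)^{k}$ gives $w^{2}=(\Diamond\boxempty)^{2k}$. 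Either $k=0$ (so $x=\Id$) or $k\geq 1$, in which case $w^{2}=\Id$ is exactly the degenerate equation of Lemma~\ref{LemmaCasDegeneres}(1).
\item $(0,1)$: here $w=(\Diamond\boxempty)^{k}\Diamond$. Using $\Diamond(\Diamond\boxempty)^{k}=\boxempty(\Diamond\boxempty)^{k-1}$ and $(\Diamond\boxempty)^{k}\boxempty=(\Diamond\boxempty)^{k}$ for $k\geq 1$, the product collapses to $w^{2}=(\Diamond\boxempty)^{2k-1}\Diamond$, which lands in Lemma~\ref{LemmaCasDegeneres}(2). For $k=0$ we get the expected $x=\Diamond$.
\item $(1,0)$: here $w=\boxempty(\Diamond\boxempty)^{k}$ yields $w^{2}=\boxempty(\Diamond\boxempty)^{2k}$ (and $w^{2}=\boxempty$ for $k=0$), matching Lemma~\ref{LemmaCasDegeneres}(3).
\item $(1,1)$: here $w=\boxempty(\Diamond\boxempty)^{k}\Diamond$ yields $w^{2}=\boxempty(\Diamond\boxempty)^{2k+1}\Diamond$, matching Lemma~\ref{LemmaCasDegeneres}(4).
\end{itemize}

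In every case other than $x=\Id$ and $x=\Diamond$, the equation $w^{2}=\Id$ is one of the degenerate equations listed in Lemma~\ref{LemmaCasDegeneres}, and therefore forces $\boxempty=\Id$ in $\mathrm{M}$; but this contradicts the hypothesis that $\mathrm{M}$ is a strict 2-PIM. Hence only $x=\Id$ and $x=\Diamond$ survive.

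I do not anticipate any substantial obstacle: everything reduces to bookkeeping with the reduction rules. The only point requiring a little care is the $(0,1)$ case, where the two $\Diamond$'s meeting in the middle must be cancelled and the remaining factor $\boxempty^{2}=\boxempty$ absorbed in order to re-normalise $w^{2}$ into the canonical form $(\Diamond\boxempty)^{2k-1}\Diamond$ before quoting Lemma~\ref{LemmaCasDegeneres}.
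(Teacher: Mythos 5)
Your proof is correct and is essentially the paper's own argument: both write the putative involution in the canonical form $\boxempty^{d}(\Diamond\boxempty)^{k}\Diamond^{f}$, square it using only $\boxempty^2=\boxempty$ and $\Diamond^2=\Id$, and observe that except for $\Id$ and $\Diamond$ the resulting relation is one of the degenerate equations of Lemma~\ref{LemmaCasDegeneres}, contradicting strictness. The case-by-case computations (including the normalisation to $(\Diamond\boxempty)^{2k-1}\Diamond$ in the $(0,1)$ case) match the paper's.
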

\begin{proof}
Let us assume there is another involution $\Diamond'$ in $\mathrm{M}$. Necessarily, we have $\Diamond'=\boxempty^{\alpha_1}(\Diamond\boxempty)^p\Diamond^{\alpha_2}$ with $\alpha_1,\alpha_2\in\{0,1\}$, $p\in\mathbb N$.
  As $\Diamond'^2=\Id$, we have
  \[
\Diamond'\Diamond'=  \boxempty^{\alpha_1}(\Diamond\boxempty)^p\Diamond^{\alpha_2}\boxempty^{\alpha_1}(\Diamond\boxempty)^p\Diamond^{\alpha_2}=\left\{
  \begin{array}{ll}
  (\Diamond\boxempty)^{2p}=\Id&\mbox{ if }\alpha_1=\alpha_2=0\\
  \boxempty(\Diamond\boxempty)^{2p}=\Id&\text{ if }\alpha_1=1 \text{ and }\alpha_2=0\\
(\Diamond\boxempty)^{2p-1}\Diamond=\Id&\text{ if }\alpha_1=0\text, \alpha_2=1\text{ and }p>0\\
\Diamond^2=\Id&\mbox{ if }\alpha_1=0\text, \alpha_2=1\text{ and }p=0\\
  (\boxempty\Diamond)^{2p+2}=\Id&\text{ if }\alpha_1=1\text{ and }\alpha_2=1.
  \end{array}
  \right.
  \]
 In each of the above cases, the generated monoid is degenerate and monogenic (see Lemma \ref{LemmaCasDegeneres}). This concludes the proof. 
 \end{proof}
 
   \begin{proposition}\label{pr-M00}
   For every $k,r,k',r'\in\mathbb N\setminus \{0\}$, we have
  $$\mathrm{M}_{00}(k,r)\sim \mathrm{M}_{00}(k',r')\Longleftrightarrow k=k'\wedge r=r'$$
  \end{proposition}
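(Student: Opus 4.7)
The backward direction is immediate. For the forward direction, suppose $\phi\colon M_{00}(k,r)\to M_{00}(k',r')$ is an isomorphism; I extract two numerical invariants that together force $(k,r)=(k',r')$. The first is the order: by Section \ref{sect-size}, $|M_{00}(k,r)|=4(k+r)-2$, so $k+r=k'+r'$. The second, finer one is the cardinality of the kernel (the unique minimal two-sided ideal) $K$ of the finite monoid; I claim $K=M(\Diamond\boxempty)^k M$ and $|K|=4r$.

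The set $K$ is visibly a two-sided ideal. To see it is the minimal one, it suffices to show $(\Diamond\boxempty)^k\in MxM$ for every $x\in K$, which then gives $MxM=K$. When $x=(\Diamond\boxempty)^j$ with $j\in[k,k+r-1]$, the defining relation $(\Diamond\boxempty)^k=(\Diamond\boxempty)^{k+r}$ yields $(\Diamond\boxempty)^k=x\cdot(\Diamond\boxempty)^{k+r-j}\in xM$; the three remaining canonical forms of stable elements, namely $(\Diamond\boxempty)^j\Diamond$ with $j\geq k$ and $\boxempty(\Diamond\boxempty)^m$, $\boxempty(\Diamond\boxempty)^m\Diamond$ with $m\geq k-1$, reduce to the previous case after at most two multiplications by $\Diamond$ on one or both sides. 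To count $|K|$, I enumerate stable elements via the canonical form $\boxempty^{d_0}(\Diamond\boxempty)^i\Diamond^{f_0}$: such an element lies in $K$ iff $i\geq k$ when $d_0=0$ and $i\geq k-1$ when $d_0=1$, and modulo the cyclic identification imposed by the relation each of the four forms contributes exactly $r$ distinct elements, totalling $4r$.

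Since any isomorphism preserves the minimal ideal, $4r=4r'$, so $r=r'$, and combined with the order identity this gives $k=k'$. The main obstacle is not conceptual but bookkeeping: checking in each canonical form exactly which indices $i$ yield a stable element, and verifying that the cyclic identification collapses exactly $r$ of them per form.
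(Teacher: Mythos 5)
Your proof is correct, but it takes a genuinely different route from the paper's. The paper fixes $\phi(\Diamond)=\Diamond$ via the uniqueness of the involution (Lemma \ref{lm-unicite-involution}), runs a four-way case analysis on the canonical form $\boxempty^{\alpha_1}(\Diamond\boxempty)^p\Diamond^{\alpha_2}$ of $\phi(\boxempty)$ to show that $\mathrm{M}_{00}(k,r)$ must satisfy $\eq_{00}(k',r')$, and then combines the gcd/min reduction of Lemma \ref{lem:eq1+eq1=eq1} with the order identity $k+r=k'+r'$. You instead extract two purely numerical isomorphism invariants: the order $4(k+r)-2$ (which is indeed the correct value — the Hilbert series in Section \ref{sect-size} and the Kuratowski instance $4\cdot 4-2=14$ confirm it, even though the displayed simplification $2(k+r)+2(k+r-1)=2k+2r-2$ and Table \ref{tab-order} carry a typo) and the cardinality $4r$ of the kernel. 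Your verification that $K=M(\Diamond\boxempty)^kM$ is the minimal ideal is sound ($(\Diamond\boxempty)^k=(\Diamond\boxempty)^j\cdot(\Diamond\boxempty)^{k+r-j}$ for $k\le j\le k+r-1$, and the other canonical forms reduce to this by multiplying by $\Diamond$), and the count of $r$ stable elements in each of the four canonical families checks out against the enumeration of Section \ref{sect-size}. What your approach buys is the elimination of both the case analysis on $\phi(\boxempty)$ and the reliance on Lemmas \ref{lm-unicite-involution} and \ref{lem:eq1+eq1=eq1}, at the cost of the ideal-theoretic bookkeeping you acknowledge; what the paper's approach buys is uniformity with the subsequent non-isomorphism propositions, which reuse the same ``track the image of $\boxempty$'' machinery.
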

  \begin{proof}
 Let $\phi: \mathrm{M}_{00}(k,r)\rightarrow \mathrm{M}_{00}(k',r')$ be an  isomorphism. We define by $\Diamond_{\phi}$ (resp. $\boxempty_{\phi}$) the image of $\Diamond$ (resp $\boxempty$) by $\phi$.
  From Lemma \ref{lm-unicite-involution} we have $\Diamond_\phi=\Diamond$. From Section \ref{sect-size} we have $\#\mathrm{M}_{00}(k,r)=k+r-2$ and $\#\mathrm{M}_{00}(k',r')=k'+r'-2$. As  $\mathrm{M}_{00}(k,r)\sim\mathrm{M}_{00}(k',r')$, we have $k+r=k'+r'$.
 
For some $\alpha_1,\alpha_2\in\{0,1\}$, $p\in\mathbb N$, we have $\boxempty_{\phi}=\boxempty^{\alpha_1}(\Diamond\boxempty)^p\Diamond^{\alpha_2}$.
Four cases arise:
  \begin{enumerate}
  \item If $\alpha_1=\alpha_2=0$, then we have  
  \[
\phi((\Diamond\boxempty)^{k'})=(\Diamond(\Diamond\boxempty)^p)^{k'}=\boxempty(\Diamond\boxempty)^{(p-1)k'}=\boxempty(\Diamond\boxempty)^{(p-1)(k'+r')}=\phi((\Diamond\boxempty)^{k'+r'})
  \]
  \item If $\alpha_1=1$ and $\alpha_2=0$, then we have
  \[
 \phi((\Diamond\boxempty)^{k'})=(\Diamond\boxempty(\Diamond\boxempty)^p)^{k'}=(\Diamond\boxempty)^{k'(p+1)}=(\Diamond\boxempty)^{(k'+r')(p+1)}=\phi((\Diamond\boxempty)^{k'+r'})
  \]
  \item  If $\alpha_1=0$ and $\alpha_2=1$, then we have
  \[
 \phi((\Diamond\boxempty)^{k'})=(\Diamond(\Diamond\boxempty)^p\Diamond)^{k'}=(\boxempty\Diamond)^{pk'}=
(\boxempty\Diamond)^{p(k'+r')}=\phi((\Diamond\boxempty)^{k'+r'})
  \] 
  \item If $\alpha_1=\alpha_2=1$, then we have
  \[
  \phi((\Diamond\boxempty)^{k'})=((\Diamond\boxempty)^{p+1}\Diamond)^{k'}=(\Diamond\boxempty)^{k'p+1}\Diamond=(\Diamond\boxempty)^{(k'+r')p+1}\Diamond=\phi((\Diamond\boxempty)^{k'+r'})
  \]
  \end{enumerate}
    Hence, for each case, $\mathrm{M}_{00}(k,r)$ satisfies  the equation $(\Diamond\boxempty)^{k'}=(\Diamond\boxempty)^{k'+r'}$. Thus, as $\mathrm{M}_{00}(k,r)$ also satisfies  the equation $\eq_{00}(k,r)$, we have from Lemma \ref{lem:eq1+eq1=eq1} that
  $k\leq k'$ and $r'$ is a  multiple of $r$. As $k+r=k'+r'$ we have $k=k'$ et $r=r'$.

  \end{proof}
 Let $\mathrm{M}$ be a strict $2$-PIM and  $\varphi_{\mathrm M}$ the canonical surjection from  $\{\Diamond,\boxempty\}^*$ to $\mathrm M$. Two words $w$ and $w'$ over  $\{\Diamond,\boxempty\}^*$ are equivalent in ${\mathrm{M}}$ if $\varphi_{\mathrm M}(w)=\varphi_{\mathrm M}(w')$.
  A word $w\in \{\Diamond,\boxempty\}^*$ is  \emph{quasi-reduced} if it does not contain any factor  $\Diamond\Diamond$ nor $\boxempty\boxempty$. If $w$ is a word over  $\{\Diamond,\boxempty\}^*$ then  $[w]$ is the smallest equivalent word to $w$ in $\mathrm M_\emptyset$.
 \begin{lemma}\label{lemmotsequiv}
  Let   $w$ and   $w'$ be two  words over $\{\Diamond,\boxempty\}^*$  and $s\in \{\circ,\bullet\}$.
  \begin{enumerate}
  \item\label{p1}
  If $w$ and $w'$ are equivalent in  $\mathrm{M}_{00}(k,r)$ then
   $|[w]|_{\Diamond}-|[w]|_{\boxempty}\equiv |[w']|_{\Diamond}-|[w']|_{\boxempty} \mod 2.$
   \item\label{p2} If $w$ and $w'$ are equivalent in  $\mathrm{M}_{01}^s(k)$ then
   $[w]=\boxempty u\Leftrightarrow [w']=\boxempty u'$.
  \item\label{p3} If $w$ and $w'$ are equivalent in  $\mathrm{M}_{10}^s(k)$ then
   $[w]= u\boxempty\Leftrightarrow [w']= u'\boxempty$.
\end{enumerate}
  \end{lemma}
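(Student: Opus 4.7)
My plan is to prove all three items by a single common normal-form computation. Since $\Diamond^{2}=\varepsilon$ and $\boxempty^{2}=\boxempty$ already hold in every $\mathrm{M}$ of the statement, the map $w\mapsto[w]$ is itself invariant under these rules, so it suffices to show that the \emph{additional} defining relation of each monoid, applied in an arbitrary context $u\cdot\_\cdot v$, preserves the invariant in question.

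The key technical ingredient is the following normal-form computation, which I would prove by a direct case analysis on $(b_{u},a_{v})\in\{0,1\}^{2}$ and on the degenerate subcases $k_{u}=0$ or $k_{v}=0$. Writing quasi-reduced words as $u=\boxempty^{a_{u}}(\Diamond\boxempty)^{k_{u}}\Diamond^{b_{u}}$ and $v=\boxempty^{a_{v}}(\Diamond\boxempty)^{k_{v}}\Diamond^{b_{v}}$, one gets, for every $\ell\ge 1$,
\[
[u(\Diamond\boxempty)^{\ell}v]=\boxempty^{a}(\Diamond\boxempty)^{m}\Diamond^{b},
\]
with $a=1$ iff $a_{u}=1$ or $[u]=\Diamond$ (so $a$ depends only on $[u]$), $b=b_{v}$ (so $b$ depends only on $[v]$), and $m$ an affine function of $\ell$ of slope $1$. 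The $\ell=0$ analogue $[u\boxempty v]$ is handled by a shorter variant of the same analysis and still gives $b=b_{v}$.

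Item~\ref{p1} then follows at once: the $\mathrm{M}_{00}(k,r)$-relation shifts $\ell$ by $r$, so $m$ shifts by $r$ while $a$ and $b$ remain fixed; hence $|[w]|=a+2m+b$ shifts by $2r$, and therefore $|[w]|_{\Diamond}-|[w]|_{\boxempty}\equiv a+b\pmod 2$ is an invariant. For item~\ref{p2}, the $\mathrm{M}_{01}^{s}(k)$-relation rewrites $u(\Diamond\boxempty)^{k}v$ as $u(\Diamond\boxempty)^{k'}(\Diamond v)$ with $k'\in\{k,k+1\}$; the left context stays $u$, so $a$---and thus the property ``$[w]$ starts with $\boxempty$''---is preserved. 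For item~\ref{p3}, the $\mathrm{M}_{10}^{s}(k)$-relation rewrites $u(\Diamond\boxempty)^{k}v$ as $(u\boxempty)(\Diamond\boxempty)^{k''}v$ with $k''\in\{k-1,k\}$ (the case $k''=0$ occurs only when $s=\bullet$ and $k=1$ and is covered by the $\ell=0$ variant); the right context stays $v$, so $b=b_{v}$ is preserved, and since $[u(\Diamond\boxempty)^{k}v]$ is always non-empty (it contains at least one $\boxempty$ that no $\mathrm{M}_{\emptyset}$-reduction can fully erase), ``$[w]$ ends with $\boxempty$'' is equivalent to $b=0$ and is therefore invariant as well.

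The main obstacle is the tedious but routine case analysis underlying the key normal-form computation; once it is in place, each of the three items follows by a direct bookkeeping argument.
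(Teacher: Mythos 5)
Your proposal is correct and follows essentially the same route as the paper: both argue that the additional defining relation, applied in an arbitrary context, cannot change the relevant feature of the quasi-reduced normal form (the parity of $|[w]|_{\Diamond}-|[w]|_{\boxempty}$, the first letter, or the last letter). The paper states this invariance in one line, whereas you make explicit the normal-form computation for $[u(\Diamond\boxempty)^{\ell}v]$ that justifies it; that is a welcome filling-in of detail rather than a different method.
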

\begin{proof}
For item \ref{p1}, simply note that equation $eq_{00}(k,r)$ does not change the parity of $|[w]|_{\Diamond}-|[w]|_{\boxempty}$. As $[w]$ is quasi-reduced,  the other two equations are not involved. For item \ref{p2} (resp. item \ref{p3}), simply note that equation $eq_{01}^s(k)$ (resp. $eq_{10}^s(k)$) does not change the first  (resp. last) letter of words $[w]$.
\end{proof}

   \begin{proposition}
   For every $k,k',r\in\mathbb N\setminus \{0\}$, every $i\in \{01,10,11\}$ and every $s\in \{\circ,\bullet \}$, we have
   $$\mathrm{M}_{00}(k,r)\not\sim \mathrm{M}_i^s(k')$$
   \end{proposition}
\begin{proof}
Since $eq^s_{11}(k)\Rightarrow\eq^s_{10}(k)\wedge\eq^s_{01}(k)$, it is sufficient to look only at the $i\in\{10,01\}$ cases. Let us suppose that there exists an isomorphism  $\phi: \mathrm{M}_{00}(k,r)\rightarrow \mathrm{M}_i^s(k')$. We define by $\Diamond_{\phi}$ (resp. $\boxempty_{\phi}$) the image of $\Diamond$ (resp $\boxempty$) by $\phi$.
  Let us suppose $\boxempty_{\phi}=\boxempty^{\alpha_1}(\Diamond\boxempty)^p\Diamond^{\alpha_2}$. Depending on the values of $\alpha_1,\alpha_2, i, s$ and $k'$, we obtain an additional relation from the
  monoid $\mathrm{M}_i^s$. 
  \[
  \begin{array}{|c|c|c|}
  \hline
  &\mathrm{M}_{01}^\circ(k')&\mathrm{M}_{01}^\bullet(k')\\\hline
  \alpha_1=\alpha_2=0&\boxempty(\Diamond\boxempty)^{k'(p-1)}=\boxempty(\Diamond\boxempty)^{(k'+1)(p-1)}\Diamond&\boxempty(\Diamond\boxempty)^{k'(p-1)}=\boxempty(\Diamond\boxempty)^{k'(p-1)}\Diamond\\\hline
  \alpha_1=1\text{ and }\alpha_2=0&(\Diamond\boxempty)^{k'(p+1)}=(\Diamond\boxempty)^{(k'+1)(p+1)}\Diamond&(\Diamond\boxempty)^{k'(p+1)}=(\Diamond\boxempty)^{k'(p+1))}\Diamond\\\hline
  \alpha_1=0\text{ and }\alpha_2=1&(\boxempty\Diamond)^{k'p}=(\boxempty\Diamond)^{(k'+1)p-1}\boxempty&(\boxempty\Diamond)^{k'p}=(\boxempty\Diamond)^{k'p-1}\boxempty\\\hline
  \alpha_1=\alpha_2=1&(\Diamond\boxempty)^{k'p+1}\Diamond=(\boxempty\Diamond)^{(k'+1)p+1}&(\Diamond\boxempty)^{k'p+1}\Diamond=(\boxempty\Diamond)^{k'p+1}\\\hline
  \end{array}
  \]

  \[
  \begin{array}{|c|c|c|}
  \hline
  &\mathrm{M}_{10}^\circ(k')&\mathrm{M}_{10}^\bullet(k')\\\hline
  \alpha_1=\alpha_2=0&\boxempty(\Diamond\boxempty)^{k'(p-1)}=(\Diamond\boxempty)^{k'(p-1)+p}&\boxempty(\Diamond\boxempty)^{k'(p-1)}=(\Diamond\boxempty)^{(k'-1)(p-1)+p}\\\hline
  \alpha_1=1\text{ and }\alpha_2=0&(\Diamond\boxempty)^{k'(p+1)}=(\Diamond\boxempty)^{(k'p+1)}\Diamond&(\Diamond\boxempty)^{k'(p+1)}=(\Diamond\boxempty)^{(k'-1)(p+1))}\Diamond\\\hline
 \alpha_1=0\text{ and }\alpha_2=1&(\boxempty\Diamond)^{k'p}=(\Diamond\boxempty)^{k'p-1}\Diamond&(\boxempty\Diamond)^{k'p}=(\Diamond\boxempty)^{(k'-1)p-1}\Diamond\\\hline
 \alpha_1=\alpha_2=1&(\Diamond\boxempty)^{k'p+1}\Diamond=(\boxempty\Diamond)^{k'p+1}&(\Diamond\boxempty)^{k'p+1}\Diamond=(\boxempty\Diamond)^{(k'-1)p+1}\\\hline
  \end{array}
  \]
It can be seen that in all cases, the additional relation does not preserve parity between $\Diamond$ and $\boxempty$ and therefore contradicts  Lemma \ref{lemmotsequiv}. Then $\phi$ can not be an isomorphism which proves the result.
    
\end{proof}
  \begin{lemma}\label{lem-notiso-10-01}
      Let $k,k'$ be positive integers and $s,s'\in\{\bullet,\circ\}$. We have the two following assertions:
      \begin{enumerate}
          \item the monoid $\mathrm{M}_{10}^{s'}(k')$ is not homomorphic to any digenic submonoid of $\mathrm{M}_{01}^s(k)$,
         \item the monoid $\mathrm{M}_{01}^{s'}(k')$ is not homomorphic to any digenic submonoid of $\mathrm{M}_{10}^s(k)$.
  \end{enumerate}
  \end{lemma}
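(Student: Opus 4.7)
The plan is to prove item~1 in detail; item~2 will then follow from a symmetric argument that exchanges the invariant ``starts with $\boxempty$'' from Lemma \ref{lemmotsequiv}(2) with the ``ends with $\boxempty$'' invariant from Lemma \ref{lemmotsequiv}(3). I would argue by contradiction: assume $\phi\colon\mathrm{M}_{10}^{s'}(k')\to\mathrm{M}_{01}^s(k)$ is a homomorphism whose image is a digenic submonoid. Since $\phi(\Diamond)^2=\mathrm{Id}$, the element $\phi(\Diamond)$ is either $\mathrm{Id}$ or an involution of $\mathrm{M}_{01}^s(k)$, and Lemma \ref{lm-unicite-involution} pins $\phi(\Diamond)=\Diamond$ (the choice $\phi(\Diamond)=\mathrm{Id}$ is excluded, since it would force the image to be monogenic).

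Next, I would write $e:=\phi(\boxempty)$ in its canonical $\mathrm{M}_\emptyset$-normal form $e=\boxempty^{\alpha_1}(\Diamond\boxempty)^p\Diamond^{\alpha_2}$ and discard the degenerate choices $e=\mathrm{Id}$ (image monogenic) and $e=\Diamond$ (not idempotent). Applying $\phi$ to the defining equation $(\Diamond\boxempty)^{k'}=(\boxempty\Diamond)^{k'-\epsilon}\boxempty$ of $\mathrm{M}_{10}^{s'}(k')$, where $\epsilon=0$ if $s'=\circ$ and $\epsilon=1$ if $s'=\bullet$, we obtain the identity
\[
(\Diamond e)^{k'}=(e\Diamond)^{k'-\epsilon}\,e
\]
in $\mathrm{M}_{01}^s(k)$. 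By Lemma \ref{lemmotsequiv}(2) the two sides of this identity must share the ``starts with $\boxempty$'' property of their $\mathrm{M}_\emptyset$-normal forms, and the plan is to exhibit a discrepancy.

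The heart of the argument is a short case split on $\alpha_1$. If $\alpha_1=1$, the right-hand side inherits the leading $\boxempty$ of $e$, whereas $\Diamond e=(\Diamond\boxempty)^{p+1}\Diamond^{\alpha_2}$ begins with $\Diamond$; a short induction using only the rewrites $\Diamond\Diamond=\mathrm{Id}$ and $\boxempty\boxempty=\boxempty$ then shows that the normal form of $(\Diamond e)^{k'}$ still begins with $\Diamond$, contradicting Lemma \ref{lemmotsequiv}(2). If instead $\alpha_1=0$ and $p\geq 1$, the cancellation $\Diamond(\Diamond\boxempty)^p=\boxempty(\Diamond\boxempty)^{p-1}$ flips the left side so that it begins with $\boxempty$, while the right side — inheriting the leading $\Diamond$ of $e$ — normalizes to a word that still begins with $\Diamond$; contradiction again.

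For item~2, the same template is applied to a hypothetical homomorphism $\psi\colon\mathrm{M}_{01}^{s'}(k')\to\mathrm{M}_{10}^s(k)$. After forcing $\psi(\Diamond)=\Diamond$ and setting $e=\psi(\boxempty)$, the transferred relation reads $(\Diamond e)^{k'}=(\Diamond e)^{k'+\delta}\Diamond$, with $\delta\in\{0,1\}$ according to $s'$; now the \emph{last} letters of the two $\mathrm{M}_\emptyset$-normal forms differ according to $\alpha_2$, and Lemma \ref{lemmotsequiv}(3) supplies the contradiction. The main obstacle in either direction is the bookkeeping of the normal forms of the iterated words $(\Diamond e)^{k'}$ and $(e\Diamond)^{k'-\epsilon}e$; every such reduction rests ultimately on the identity $\Diamond(\Diamond\boxempty)^p=\boxempty(\Diamond\boxempty)^{p-1}$ for $p\geq 1$, which is precisely the mechanism that can swap the leading (or trailing) letter between $\Diamond$ and $\boxempty$.
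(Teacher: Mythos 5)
Your proposal is correct and follows essentially the same route as the paper's proof: pin $\phi(\Diamond)=\Diamond$ via Lemma \ref{lm-unicite-involution}, write $\phi(\boxempty)=\boxempty^{\alpha_1}(\Diamond\boxempty)^p\Diamond^{\alpha_2}$, push the defining relation of the source monoid through $\phi$, and contradict the first-letter (resp.\ last-letter) invariant of Lemma \ref{lemmotsequiv}, with the same case split on $\alpha_1$ (resp.\ $\alpha_2$) and the same disposal of the non-digenic case $\alpha_1=p=0$.
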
 
 \begin{proof}
If $\phi$ is a  morphism of monoids, we denote by $X_{\phi}$  the image of an element $X$ by the morphism $\phi$.
  
 \begin{enumerate} 
     \item 
     Suppose there is an morphism $\phi:\mathrm{M}_{10}^{s'}(k')\rightarrow\mathrm{M}_{01}^{s}(k)$.  From Lemma \ref{lm-unicite-involution} we have $\Diamond_\phi=\Diamond$. We have 
  $\boxempty_{\phi}=\boxempty^{\alpha_1}(\Diamond\boxempty)^p\Diamond^{\alpha_2}$. The relation of $\mathrm{M}_{10}^{s'}(k')$ is of the form
  \[
  (\Diamond\boxempty)^{k'}=\boxempty (\Diamond\boxempty)^{k'-\epsilon} (\epsilon\in\{0,1\}).
  \]
  This gives
 \[
  (\Diamond\boxempty^{\alpha_1}(\Diamond\boxempty)^p\Diamond^{\alpha_2})^{k'}=\boxempty^{\alpha_1}(\Diamond\boxempty)^p\Diamond^{\alpha_2}(\Diamond(\boxempty^{\alpha_1}(\Diamond\boxempty)^p\Diamond^{\alpha_2}))^{k'- \epsilon}.
  \]
  If $\alpha_1=1$ then there exists a quasi-reduced word equivalent to the right hand side beginning with $\boxempty$ and there exists a quasi-reduced word equivalent to the left hand side beginning with $\Diamond$. 
  If $\alpha_1=0$ and $p>0$ then we can apply a symmetric argument. In both cases, this contradicts point \ref{p2} of Lemma \ref{lemmotsequiv}. If $\alpha_1=0$ and $p=0$ then the image of the monoid by $\phi$ is not digenic.
  \item Suppose  there is a morphism $\phi:\mathrm{M}_{01}^{s'}(k')\rightarrow\mathrm{M}_{10}^{s}(k)$. From Lemma \ref{lm-unicite-involution} we have $\Diamond_\phi=\Diamond$. Let
  $\boxempty_{\phi}=\boxempty^{\alpha_1}(\Diamond\boxempty)^p\Diamond^{\alpha_2}$. The relation of $\mathrm{M}_{01}^{s'}(k')$ is of the form
  \[
  (\Diamond\boxempty)^{k'}=(\Diamond\boxempty)^{k'+\epsilon}\Diamond (\epsilon\in\{0,1\}).
  \]
  This gives
 \[
  (\Diamond\boxempty^{\alpha_1}(\Diamond\boxempty)^p\Diamond^{\alpha_2})^{k'}=(\Diamond\boxempty^{\alpha_1}(\Diamond\boxempty)^p\Diamond^{\alpha_2})^{k'+\epsilon}\Diamond
  \]
  If $\alpha_2=0$ and $p+\alpha_1>0$ then there exists a quasi-reduced word equivalent to the left hand side that ends in $\boxempty$ and there exists a quasi-reduced word equivalent to the right hand side that ends in $\Diamond$.
  If $\alpha_2=1$ and $p+\alpha_1>0$ then we can apply a symmetric argument. In both cases, this contradicts point \ref{p3} of Lemma \ref{lemmotsequiv}. If $\alpha_1=0$ and $p=0$ then the image of the monoid by $\phi$ is not digenic.

 \end{enumerate}
  \end{proof}
\begin{proposition}\label{MineqMj}
      For every $k,k'\in\mathbb N\setminus \{0\}$, every $i\in \{01,10\}$ and every $s,s'\in \{\circ,\bullet \}$, we have the following assertions:
\begin{enumerate}
    \item\label{it1} 
   $\mathrm{M}_{01}^s(k)\not\sim \mathrm{M}_{10}^{s'}(k')$,
      \item \label{it2}
   $\mathrm{M}_i^s(k)\not\sim \mathrm{M}_{11}^{s'}(k')$,
\end{enumerate}
\end{proposition}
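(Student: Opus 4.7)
The strategy is to reduce both items to the structural non-existence results of Lemma~\ref{lem-notiso-10-01} by converting any hypothetical isomorphism into a surjective morphism that the lemma forbids. In contrast with the previous proposition, cardinality arguments alone will not suffice: Table~\ref{tab-order} shows that $\#\mathrm{M}_{01}^{s}$ and $\#\mathrm{M}_{10}^{s'}$, as well as $\#\mathrm{M}_{i}^{s}$ and $\#\mathrm{M}_{11}^{s'}$, can coincide for appropriate parameters, so a structural obstruction is needed.

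For item (\ref{it1}), I would suppose there is an isomorphism $\phi:\mathrm{M}_{01}^s(k)\to\mathrm{M}_{10}^{s'}(k')$ and pass to its inverse $\phi^{-1}:\mathrm{M}_{10}^{s'}(k')\to\mathrm{M}_{01}^s(k)$. This is in particular a surjective monoid morphism whose image is the whole of $\mathrm{M}_{01}^s(k)$; since the target is generated by $\Diamond$ and $\boxempty$, that image is a digenic submonoid of $\mathrm{M}_{01}^s(k)$, which is exactly what the first assertion of Lemma~\ref{lem-notiso-10-01} forbids.

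For item (\ref{it2}), the connection to Lemma~\ref{lem-notiso-10-01} passes through Lemma~\ref{lm-M4=M3capM2}, which gives $\M_{11}=\M_{10}\cap\M_{01}$. Thus $\mathrm{M}_{11}^{s'}(k')$ admits a presentation containing an $\E_{10}$-equation and, symmetrically, one containing an $\E_{01}$-equation, so by Proposition~\ref{lm-Eqi<->Ei} there exist indices $s'',k''$ and canonical surjective morphisms
\[
q_{10}:\mathrm{M}_{10}^{s''}(k'')\longrightarrow\mathrm{M}_{11}^{s'}(k'),\qquad q_{01}:\mathrm{M}_{01}^{s''}(k'')\longrightarrow\mathrm{M}_{11}^{s'}(k'),
\]
obtained by imposing the extra relation defining $\mathrm{M}_{11}^{s'}(k')$ on top of the $\E_{10}$-, respectively $\E_{01}$-, presentation. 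Given a hypothetical isomorphism $\phi:\mathrm{M}_{i}^{s}(k)\to\mathrm{M}_{11}^{s'}(k')$, the composition $\phi^{-1}\circ q_{10}$ for $i=01$ (respectively $\phi^{-1}\circ q_{01}$ for $i=10$) is then a surjective morphism whose image is the full monoid $\mathrm{M}_{i}^{s}(k)$, again digenic, directly contradicting Lemma~\ref{lem-notiso-10-01}(1) (respectively Lemma~\ref{lem-notiso-10-01}(2)).

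The argument is essentially mechanical once Lemmas~\ref{lm-M4=M3capM2} and~\ref{lem-notiso-10-01} are in hand. The only point that demands care, more than a genuine obstacle, is to verify that the composed surjective morphism has a digenic image; but this is automatic, since the image coincides with the whole target, which is generated by its own involution and projection. I therefore expect the proof to consist only of the two short reductions above.
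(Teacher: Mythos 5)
Your proof is correct, and for item~(1) and the mixed-parity cases of item~(2) it is essentially the argument the paper gives: reduce to Lemma~\ref{lem-notiso-10-01} by composing a canonical surjection (coming from the fact that $\eq_{11}^{s'}(k')$ implies both $\eq_{01}^\circ(k')$ and $\eq_{10}^\circ(k')$) with the inverse of the hypothetical isomorphism, and observe that the resulting image is the whole target, hence digenic, which rules out the degenerate escape clause of that lemma. Where you genuinely differ is in the $s=s'$ subcase of item~(2): the paper disposes of it by the cardinality count of Proposition~\ref{pr-nonisomorphie-enumeration} (since $\#\mathrm{M}_i^s(k)$ and $\#\mathrm{M}_{11}^s(k')$ always differ modulo $4$) and only deploys the morphism-composition argument when $s\neq s'$, whereas your argument treats all four combinations of $(s,s')$ uniformly and never consults Table~\ref{tab-order}. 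Your version is arguably cleaner and avoids the case split; the paper's version gets the $s=s'$ case for free from results already proved. Two small points of bookkeeping: the surjections $q_{10}$, $q_{01}$ are justified not by Proposition~\ref{lm-Eqi<->Ei} (which concerns the exchangeability of $\Eq_{\underline{i}}$ and $\E_i$) but by Lemma~\ref{2et3<=>4} together with Lemma~\ref{lm-eqi+eq1=eqi} (or, at the level of monoids, Lemma~\ref{lm-M4=M3capM2} as you also cite), combined with the standard fact that adding relations to a presentation yields a canonical surjective quotient map; and you can take $s''=\circ$, $k''=k'$ explicitly rather than leaving the indices existential.
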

\begin{proof}
    \begin{enumerate}
        \item This is a direct consequence of Lemma \ref{lem-notiso-10-01}.
        \item The case where $s=s'$ is solved in Proposition \ref{pr-nonisomorphie-enumeration}. Consider now $s=\circ$ and $s'=\bullet$. According to Lemma \ref{2et3<=>4} and Lemma \ref{lm-eqi+eq1=eqi}, we have $\eq_{11}^\bullet(k')\Rightarrow \eq_{11}^\circ(k')\Rightarrow \eq_{01}^\circ(k')$ (resp. $\eq_{11}^\bullet(k')\Rightarrow \eq_{10}^\circ(k')$). 
  This means that there exists a morphism $\mathrm{M}_{01}^\circ(k')\rightarrow \mathrm{M}_{11}^\bullet(k')$ (resp. $\mathrm{M}_{10}^\circ(k')\rightarrow \mathrm{M}_{11}^\bullet(k')$).
  We deduce that $\mathrm{M}_{11}^\bullet(k')$ cannot be isomorphic to $\mathrm{M}_{01}^\circ(k)$ (resp.  $\mathrm{M}_{10}^\circ(k)$) because otherwise the monoid $\mathrm{M}_{10}^\circ(k)$ (resp. $\mathrm{M}_{01}^\circ(k)$) would be homomorphic to
  $\mathrm{M}_{01}^\circ(k')$ (resp. $\mathrm{M}_{10}^\circ(k')$) which contradicts the  item \ref{it1} of the proposition. Consider now $s=\bullet$ and $s'=\circ$. According to Lemma \ref{2et3<=>4}, we have $\eq_{11}^\circ(k')\Rightarrow \eq_{10}^\circ(k')$ (resp. $\eq_{11}^\circ(k')\Rightarrow \eq_{01}^\circ(k')$). This means that there exists a morphism $\mathrm{M}_{10}^\circ(k')\rightarrow \mathrm{M}_{11}^\circ(k')$ (resp. $\mathrm{M}_{01}^\circ(k')\rightarrow \mathrm{M}_{11}^\circ(k')$). We deduce that $\mathrm{M}_{11}^\circ(k')$ cannot be isomorphic to $\mathrm{M}_{10}^\bullet(k)$ (resp.  $\mathrm{M}_{01}^\bullet(k)$) because otherwise the monoid $\mathrm{M}_{10}^\bullet(k)$ (resp. $\mathrm{M}_{01}^\bullet(k)$) would be homomorphic to
  $\mathrm{M}_{01}^\circ(k')$ (resp. $\mathrm{M}_{10}^\circ(k')$) which contradicts the  item \ref{it1} of the proposition.
    \end{enumerate}
\end{proof}

 \begin{theorem}\label{th-classification}
For all couple of equations $\eq,\eq'$ of $\E$, we have  $\mon{eq}$ is isomorphic to $\mon{eq'}$  if and only if $eq= eq'$.
\end{theorem}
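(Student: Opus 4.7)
The plan is to establish the classification by collecting the non-isomorphism results proved in the preceding propositions. The forward direction $\eq = \eq' \Rightarrow \mon{eq} \sim \mon{eq'}$ is immediate, since the two presentations are literally identical. The work therefore lies in the converse: I show that any two distinct equations of $\E$ yield non-isomorphic monoids.

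I would proceed by exhaustive case analysis on the source subsets of $\eq$ and $\eq'$, using the parametrization $\E = \E_{00} \cup \bigcup_{i\in\{01,10,11\},\, s\in\{\circ,\bullet\}}\E_i^s$ established earlier. The possible pairs split into six cases. First, if both equations lie in $\E_{00}$, apply Proposition \ref{pr-M00} to obtain $(k,r)=(k',r')$. Second, if one equation lies in $\E_{00}$ and the other in some $\E_i^s$ with $i\neq 00$, apply the intermediate proposition yielding $\mathrm{M}_{00}(k,r)\not\sim \mathrm{M}_i^s(k')$. Third, if both equations lie in the same $\E_i^s$ (with $i\in\{01,10,11\}$) but with distinct parameters, apply item (\ref{point1}) of Proposition \ref{pr-nonisomorphie-enumeration}. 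Fourth, if the equations share the same base $i\in\{01,10,11\}$ but have opposite flavors $\circ/\bullet$, apply item (\ref{point2}) of Proposition \ref{pr-nonisomorphie-enumeration}, which separates them by their orders listed in Table \ref{tab-order}. Fifth, if the bases differ and both lie in $\{01,10\}$, combine item (\ref{it1}) of Proposition \ref{MineqMj} with item (\ref{point3}) of Proposition \ref{pr-nonisomorphie-enumeration} to cover all four flavor combinations. Sixth, if one base equals $11$ and the other lies in $\{01,10\}$, combine item (\ref{it2}) of Proposition \ref{MineqMj} with item (\ref{point4}) of Proposition \ref{pr-nonisomorphie-enumeration}.

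Since every case is already covered by a result proved in the preceding sections, the proof reduces to assembling these references. The genuinely delicate case had been the fifth one, since cardinalities fail to separate the relevant monoids: for instance $\#\mathrm{M}_{01}^\circ(k) = \#\mathrm{M}_{10}^\circ(k) = 4k+2$, so no order argument can distinguish them. That obstacle was overcome in Lemma \ref{lem-notiso-10-01}, which exploits the uniqueness of the involution (Lemma \ref{lm-unicite-involution}) together with the first/last-letter invariants of quasi-reduced words (items (\ref{p2}) and (\ref{p3}) of Lemma \ref{lemmotsequiv}) to force any hypothetical isomorphism to land in a non-digenic submonoid, hence to not exist.
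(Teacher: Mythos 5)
Your proof is correct and takes essentially the same route as the paper, which likewise concludes by assembling Propositions \ref{pr-nonisomorphie-enumeration}, \ref{pr-M00} and \ref{MineqMj}; your version is simply a more explicit enumeration of the cases. In fact you are slightly more careful than the paper's one-line proof, since you explicitly invoke the proposition separating $\mathrm{M}_{00}(k,r)$ from the $\mathrm{M}_i^s(k')$, which the paper proves but omits from its citation list in the theorem's proof.
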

\begin{proof}
    According to Proposition \ref{pr-nonisomorphie-enumeration}, Proposition \ref{pr-M00} and Proposition \ref{MineqMj}, we can conclude.
\end{proof}
\section{Conclusion}
We describe in Table \ref{tab-resume} the classification of the different strict 2-PIMs   depending on the form of the additional equation.

To complete the 2-PIM classification, we now need to look at the case where both generators are idempotent. Many examples of idempotent operations are closure operations (extensive and increasing idempotent). The notion of closure is defined at the level of sets and seems not to be encapsulated in the formalism we developed above. The reason we're particularly interested in these closure operators is that they are frequently used in language theory, particularly for calculating language orbits.  In forthcoming works, we plan to embed the result in the study of several monoids like the ones that appears in \cite{CDHS11,Das19,Das21,Das22,Kur22}.
\bibliography{../COMMONTOOLS/biblio}
\end{document}